\long\def\beginpgfgraphicnamed#1#2\endpgfgraphicnamed{\includegraphics{#1}}%
\title{Resonances for a diffusion with small noise}
\author{Markus Klein%
\thanks{Universität Potsdam, Institut für Mathematik,
Professur Mathematische Physik Semiklassik \& Asympthotik,
14415 Potsdam}, 
Pierre-André Zitt%
\thanks{Équipe d'accueil Modal'X, b\^{a}t. G,
Université Paris \textsc{X}, 92000 Nanterre}%
}
\newcounter{stepcnt}
\newcommand{\step}[1]{%
  \stepcounter{stepcnt}%
  \par\noindent\textbf{Step \thestepcnt: #1.} %
  }
\newcommand*{\abs}[1]{\left|#1\right|} 
\newcommand*{\field}[1]{\mathbb{#1}}
\newcommand*{\R}{\field{R}}
\let\xR\R
\let\xC\C
\newcommand*{\xN}{\mathbb{N}}
\newcommand{\xRe}{\mathfrak{R}}
\newcommand{\xIm}{\mathfrak{I}}
\newcommand*{\bigO}{O}
\newcommand*{\logSim}{\stackrel{ln}{\sim}}
\newcommand{\grad}{\nabla}
\newcommand{\on}[1]{_{|#1}}
\newcommand*{\norm}[1]{\left\|#1\right\|}
\newcommand*{\lTwoNrm}[1]{\left\|#1\right\|_2}
\newcommand*{\compl}[1]{#1^{c}}
\newcommand*{\floor}[1]{\lfloor#1\rfloor}
\newcommand*{\tens}{\otimes}
\newcommand*{\ind}[1]{\mathbf{1}_{#1}}
\newcommand*{\aleq}{\lesssim}
\DeclareMathOperator{\supp}{Supp}
\DeclareMathOperator{\rank}{Rank}
\newcommand*{\fou}[1]{\hat{#1}}
\newcommand*{\cf}{cf.}
\newcommand*{\ie}{\emph{i.e.}}
\newcommand*{\eg}{\emph{e.g.}}
\newtheorem{thrm}{Theorem}
\newtheorem{prpstn}[thrm]{Proposition}
\newtheorem{dfntn}[thrm]{Definition}
\newtheorem{rmrk}[thrm]{Remark}
\newtheorem{hpthss}{Hypothesis}
\newtheorem{lmm}[thrm]{Lemma}
\newcommand*{\eps}{\varepsilon}         
\newcommand*{\rZero}{r_0(\eps)}         
\newcommand{\op}{H}
\newcommand*{\fullOp}{H_\eps}           
\newcommand*{\intOp}{H^{\rZero}_i}      
\newcommand*{\extOp}{H^{\rZero}_e}      
\newcommand*{\extRotOp}{\extOp(\theta)} 
\newcommand*{\rotOp}{H(\theta)}         
\newcommand*{\fixedOp}{H^R_i}           
\newcommand*{\extDResolv}{R^D_e}        
\newcommand*{\LB}{\Delta_{LB}}          
\newcommand*{\PDrchlt}{P^D}             
\newcommand*{\QDrchlt}{Q^D}             
\DeclareMathOperator{\tr}{trace}        
\newcommand*{\ball}[1]{\mathcal{B}(#1)} 
\newcommand*{\fixedBall}{\ball{R}}      
\newcommand*{\largeBall}{\ball{\rZero}} 
\newcommand{\chiTldZero}{\tilde{\chi}_0}
\newcommand*{\PDO}{$\Psi D O$}
\newcommand*{\pdo}{Op}
\newcommand*{\agmon}{d_{Ag}}
\newcommand{\minima}{\mathcal{M}}
\newcommand{\deltaTilde}{\tilde{\delta}}
\newcommand{\tldV}{\tilde{V}_{\omega,\eps}}
\newcommand{\tldLambda}{\tilde{\lambda}}
\newcommand{\tldMu}{\tilde{\mu}}
\newcommand{\devPart}[2]{\frac{\partial #1}{\partial #2}}
\newcommand{\devSec}[2]{\frac{\partial^2 #1}{\partial #2^2}}
 \newcommand{\dXAlpha}{\partial_x^\alpha}
 \newcommand{\dXAlphaDXiBeta}{\partial_x^\alpha \partial_\xi^\beta}
\newcommand{\safetyCst}{c_{S}}
\newcommand{\chiSmooth}{\chi_{sm}}
\begin{document}
\maketitle
\begin{abstract}
    We study resonances for the generator of a diffusion with small noise in $\xR^d$~: $L_\eps =  - \eps\Delta + \nabla F\cdot\nabla$, when the potential $F$ grows slowly at infinity (typically as a square root of the norm). The case when $F$ grows fast is well known, and under suitable conditions one can show that there exists a family of exponentially small eigenvalues, related to the wells of $F$. We show that, for an $F$ with a slow growth, the spectrum is $\xR_+$, but we can find a family of resonances whose real parts behave as the eigenvalues of the ``quick growth'' case, and whose imaginary parts are small.
\end{abstract}
\textbf{Acknowledgments.} We gratefully acknowledge the financial support of the DFH--UFA (franco--german university), through the CDFA/dfD 01--06, ``Applications of stochastic processes''. 
\section{Introduction}
The aim of this paper is to understand, from a spectral point of view, a probabilistic result obtained by one of us (\cite{Zit07annealing}). This result is the convergence of an ``annealing diffusion'', a process defined by the stochastic differential equation
\[
dX_t = \sqrt{\sigma(t)} dB_t - \grad F(X_t)dt,
\]
where $F:\xR^d \to \xR$ is a function to be minimized, and the ``temperature'' $\sigma(t)$ is a deterministic function going to zero (the ``convergence'' means that $X$ finds the global minima of $F$). The convergence was already known for potentials with a quick growth (the typical case being $F = \abs{x}^a, a>1$ at infinity); we generalized it to the ``slow growth'' case  (when $F$ behaves like $\abs{x}^a$, with $a<1$). In this case, the classical approach using strong functional inequalities (log-Sobolev, Poincaré) breaks down, and we had to resort to the so-called ``weak Poincaré inequality''.

What we are interested in here is a spectral traduction of this convergence result. 
In the ``quick growth'' case, it is known that  the spectral gap of an ``instantaneous equilibrium measure'' (equilibrium for a process at fixed temperature $\sigma$) is related to the depth $d$ of a certain well of $F$, so that:
\begin{equation}
  \label{eq=asymptoticSpectralGap}
  \text{Spectral gap at temperature }\sigma \approx \exp \left( - \frac{d}{\sigma} \right). 
\end{equation}
This can be used to find the optimal choice of $\sigma$ (namely $\sigma(t) = d/\log(t)$).

In the ``slow growth'' case of \cite{Zit07annealing}, the instantaneous measure \emph{do not} have a spectral gap. However, in a sense, they behave as if they did: the same choice of the freezing schedule, $\sigma(t) = \frac{d}{\log(t)}$, still guarantees convergence.

\bigskip

To be more precise, let us recall here another set of results, focused on the behaviour of the lower spectrum of the operators
\[  - \sigma \Delta + \nabla F\cdot \nabla,\]
when $\sigma\to 0$. In other words, we consider the generators of the original SDE (with a conventional minus sign), forgetting non-stationarity ($\sigma$ is fixed), but looking at the asymptotic $\sigma\to0$. Once more, when $F$ grows fast at infinity, much is known: using probabilistic (\cite{BEGK04,BGK05}) or, to refine the results, analytic techniques (\cite{HKN04,HN06}), a very precise analysis of the lower spectrum has already been done (we will recall and use one of these results in theorem \ref{thrm=interiorSpectrumFixedBall}). In particular, these results contain and precise the asymptotic \eqref{eq=asymptoticSpectralGap}.

In this case, the convergence result for the annealing process can be ``seen'' on the lower spectrum of the operators: the optimal freezing schedule is dictated by the asymptotic behaviour of the lower spectrum, via the constant $d$ in \eqref{eq=asymptoticSpectralGap}.

Let us remark here that the explicit terms in the asymptotic developments all depend on ``local'' properties of $F$, \ie{}  its structure on a compact set, and not on the details of its growth at infinity.

\bigskip

In the ``slow growth'' case,  the spectra are always $\xR^+$: the simulated annealing process still converges, but its optimal freezing schedule seems to be disconnected from the spectral properties of the generators. We prove that, under certain circumstances, it is not, by exhibiting other spectral quantities with the correct order of magnitude: in other words, we will try to understand what becomes of the small eigenvalues when we ``change'' the growth rate of $F$ at infinity.

We will see that the former eigenvalues give rise to \emph{resonances}.

\bigskip

Resonances are, in some sense, what remains of eigenvalues when eigenvalues
disappear. We refer to \cite{Zwo99} for a very nice introduction to resonances
(with examples from PDEs and quantum mechanics). Let us give an idea of what resonances are (the precise definition we will use comes from a different point of view, see remark \ref{rmrk=aboutTheDefOfResonances}). They may be seen as singular values, not of the resolvent map
itself (like usual eigenvalues), but of an analytic continuation of the
resolvent map on a certain dense subspace.

To be more precise, for an operator
$L$, the usual resolvent is $R(\lambda) = (L - \lambda)^{-1}$. Suppose that
$\sigma(L) = \xR_+$, and consider the map:
\[
  \lambda \mapsto (\phi, R_\lambda \phi).
\]
For a given $\phi$, this function may have a meromorphic continuation across
the real axis. If the continuations, for all $\phi$ in a dense subset, have a
common pole at some complex number $\mu$ ($\mu$ lies on the lower half-plane,
see figure \ref{fig=resonances}), this pole is called a \emph{resonance}. 

\begin{figure}
    \newcommand{\tb}[1]{\begin{tabular}{c}#1\end{tabular}}
  \beginpgfgraphicnamed{resonances_1}%
\begin{tikzpicture}
    \draw[help lines,->] (-2,0) -- (5,0);
    \draw[help lines,->] (0,-2.3) -- (0,1);
    \fill[thick] (0,0) circle (0.08cm);
    \fill[thick] (0.6,0) circle (0.08cm);
    \fill[thick] (1,0) circle (0.08cm);
    \fill[thick] (1.2,0) circle (0.08cm);
    \fill[thick] (4,0) circle (0.08cm);
    \fill[thick] (5,0) circle (0.08cm);
    \node[draw,ellipse,inner xsep=0.45cm,inner ysep=0.2cm] (vp) at (0.9,0) {};
    \node (label) at (3,-1.5) {\tb{Eigenvalues\\ $\lambda_i \approx \exp (-\frac{d_i}{\eps})$}};
    \draw [->] (vp) ..controls +(down:1.3cm) and +(left:1.5cm) .. (label);
\end{tikzpicture}%
\endpgfgraphicnamed
\hfill
\beginpgfgraphicnamed{resonances_2}%
\begin{tikzpicture}
    \draw[help lines,->] (-2,0) -- (5,0);
    \draw[help lines,->] (0,-2.3) -- (0,1);
    \draw[very thick] (0,0)--(5,0);
    \node (ess sp) at (2,1) {Essential spectrum};
    \draw[->] (ess sp) -- (2,0);
    \draw[thick] (0.6,-0.2) circle (0.08cm);
    \draw[thick] (1,-0.3) circle (0.08cm);
    \draw[thick] (1.2,-0.3) circle (0.08cm);
    \node[draw,ellipse,inner xsep=0.45cm,inner ysep=0.2cm] (vp) at (0.9,-0.2) {};
    \node (label) at (3,-1.5) {\tb{Resonances: \\$Re(r_i) \approx \exp (-\frac{d_i}{\eps})$}};
    \draw [->] (vp) ..controls +(down:1.3cm) and +(left:1.9cm) .. (label);
\end{tikzpicture}%
\endpgfgraphicnamed
\label{fig=resonances}
\caption{Resonances}
\end{figure}

To prove the existence of such quantities, a possible approach is to deform
drastically the operator and try to move the essential spectrum ``out of the
way''. Resonances of the original operator then appear as (complex) eigenvalues
of the (non self-adjoint) deformed operator. A basic example of this technique,
called ``complex scaling'', can be found in Reed and Simon \cite{RS78_IV} (sections \textsc{XII}.6 and \textsc{XIII}.10).
However, what bothers us in our case is really the part coming from infinity,
and we would like to keep the operator intact on the region where the minima
are. Therefore we will have to resort to the more refined exterior complex
scaling (see below for more remarks on this).

\begin{rmrk}
    \label{rmrk=aboutTheDefOfResonances}
    In the sequel, we will define resonances as the eigenvalues of the distorted operator (\cf{} section \ref{subsec=exteriorScaling}).
\end{rmrk}

To be able to adapt known results on resonances more easily, we perform a unitary transform of our operators that turn them into the following Schrödinger operators:
\begin{equation}
  \label{eq=defFullOperator}
  \fullOp = - \eps^2 \Delta + V_\eps
\end{equation}
where
\[
V_\eps(x)  =  \frac{1}{2}\abs{\nabla F}^2 - \frac{\eps}{2}\Delta F ,
\]
and $F$ is the original probabilistic potential. This correspondance is well
known (originally, it was noted and used to study Schrödinger operators,
\cf{} for example \cite{Car79}; later the reverse way was also used, for
example in \cite{Cat05} to prove criteria for the spectral gap).

We also define 
\begin{equation}
  \label{eq=VandF}
  V = \frac{1}{2} \abs{\nabla F}^2 .
\end{equation}

The paper is divided in the following way. First we state our hypotheses and the main result (section \ref{sec=mainResult}). Section \ref{sec=extScaling} describes exterior scaling and how it is used to prove the existence of resonances: we will define here several auxiliary operators, obtained by putting a Dirichlet boundary condition on a particular sphere, and modifying further the ``outside'' part.

In section \ref{sec=Agmon}  we  prove estimates on the decay of eigenfunctions of certain operators, in the spirit of Agmon.
 We describe in section \ref{sec=lowerSpectrum} the lower spectrum of the interior operator. 
 We need to  show that the exterior part of the operator does not create resonances near the eigenvalues (which come  from the interior part). This is one of the main difficulties;  it is done in section \ref{sec=exteriorResolvent}, using symbolic calculus for pseudo-differential operators.

Finally, all these results are put together in section \ref{sec=spectralStability}, where we establish a ``spectral stability'' between the original operator and the modified one, thereby proving the existence of resonances.

\bigskip
\textbf{Notation.}
Almost every quantity we will consider will depend on the small parameter $\eps$ . For two such quantities $a$ and $b$, we write $a\aleq b$ if there exists a constant $C$ such that $a\leq Cb$. This constant may depend on the dimension $d$, and  on the potential $F$, but not on $\eps$. We will also write $a\logSim b$ if $\log(a)\sim\log(b)$.

\section{Main result}
\label{sec=mainResult}
  We need two kind of hypotheses on the ``probabilistic potential'' $F$
  (and on the ``Schrödinger potential'' $V_\eps$): some describe the well structure inside a compact set, the others deal with behaviour at infinity.

  The first ones are in some sense ``non degeneracy'' assumptions, that were used in the ``quick growth'' case (\cite{BEGK04,BGK05,HKN04}, to which we refer for more details). To state them, we need a definition: for a point $x$ and a set $A$, let $\mathcal{C}(x,A)$ be:
  \[
  \inf_{\gamma\in\Gamma(x,A)}\{ \sup_{t\in[0,1]} F(\gamma(t))\}  - F(x),
  \]
  where the $\Gamma$ is the set of continuous paths joining $x$ to $A$. 
  The quantity $\mathcal{C}(x,A)$ is the ``cost'' one has to pay to go from $x$ to $A$; in other words, this is the height of the energy barrier between $x$ and $A$.

  In terms of this cost, the assumptions may be formulated as follows:
  \begin{itemize}
      \item $F$ has a finite number of local minima, $x_0,\ldots x_N$.
      \item $x_0$ is the (unique) global minimum.
      \item there exist critical depths $d_0=\infty > d_1 >\cdots d_N$ such that:
          \[ \mathcal{C}\left(x_{i+1},\{x_0,\ldots x_i\} \right)= d_i.\]
  \end{itemize}
  \begin{rmrk}
      It is natural to set $d_0 = \infty$: it corresponds to the
      cost of going from the global minimum to infinity (where $F \to \infty$).
      Furthermore, we will associate to each potential well  an eigenvalue of
      order $\exp(-d_i/\eps)$: the first eigenvalue $0$ therefore corresponds
      to the global minimum (the infinitely deep well). Finally, we will have
      to consider a (simple) case with boundary later on: we will then
      introduce a $d'_0$, the cost of going from the global minimum to the
      boundary, which will describe the lowest lying eigenvalue.
  \end{rmrk}

We now state the assumptions on the behaviour at infinity of $V$. Let us note beforehand that they seem much more stringent than the ``local'' ones. However, in the light of the original probabilistic result, it is already interesting to know what happens in the ``reference case'' where $F(x) = \abs{x}^a$ at infinity,  with $0<a<1$. 

The ``exterior scaling'' method demands that $V_\eps$ has an analytic
continuation somewhere near the ``real axis'' $\xR^ n$. We assume it in a
small conic region. To define it, let $\xIm(z),\xRe(z)$ denote the real and
imaginary parts of $z\in\xC^d$ (if $z = (x_1 + iy_1, \ldots x_d + iy_d)$,
$\xRe(z)$ is the vector $(x_1,\ldots x_d)$, and $\abs{\xRe(z)}$ is its
euclidean norm in $\xR^d$).
\begin{hpthss}
  \label{hpthss=analyticity}
  There exists an angle (say $3\beta_0$) such that 
$F$, as a function on the exterior of a fixed ball,   
has an analytic continuation to the following subset of $\xC^d$: 
\begin{equation}
    \label{eq=defAnalyticityCone}
    \mathcal{S} = \left\{ z \bigg| \abs{z}\geq R_0,  \frac{ \abs{\xIm(z)}}{\abs{\xRe(z)}} \leq \tan(2\beta_0) \right\}.
\end{equation}
\end{hpthss}
\begin{rmrk}
  In some references, it  is the map $r \mapsto V(r,\omega)$  that is supposed
  to be analytic (where $V$ is seen as a multiplication operator on $L^2$).
  For simplicity, we assume analyticity directly for $F$, and therefore for
  $V_\eps$. This will also give us estimates on the derivatives of $V$.
\end{rmrk}

\begin{hpthss}
  \label{hpthss=decay}
  $V_\eps$ has a power-law decay at infinity: there exists $ \gamma \in (0,2), c_V, C_V$, independent of $\eps$, such that
  \[ c_V \abs{x}^{- \gamma} \leq V_\eps(x) \leq C_V \abs{x}^ {-\gamma}, \]
  outside some fixed ball. Its analytic continuation is similarly bounded:
  \[ \abs{V'(z)}\leq C_V \abs{z}^{-\gamma}.\]
  Moreover, outside this ball, and for any angle $\omega$, 
  \[ \abs{V_\eps'(r)} \geq c_V V_\eps(r).\]
\end{hpthss}

\begin{rmrk}
  This hypothesis is very strong. However,  such bounds do seem necessary
  if we are to accurately estimate the deformation $V(r_\theta,\omega)$
  (\cf{} in particular proposition \ref{prpstn=TaylorBoundsOnRTheta}).
  The lower bound on $V'$ is reminiscent of so-called ``non-trapping
  conditions'' (\cf{} remark \ref{rmrk=aboutNonTrapping} below). The restriction $\gamma<2$ is more natural than it seems: we will see later that, if $\gamma >2$, the Agmon distance between the wells and infinity becomes finite, so the approach should break down. In any case, this hypothesis covers the reference case $F = \abs{x}^a$.
\end{rmrk}

We now come to the statement of the main result. It uses the distorted operator $\rotOp$, which will be formally defined in the next section (eq. \eqref{eq=expression_of_the_exterior_scaling}.
  \begin{thrm}
    \label{thm=mainResult}
    There exist $\theta = i\beta$,  some  functions $\rZero, S(\eps)$ and, for each index $i$, two functions $\lambda_i(\eps)$ and  
    $\mu_i(\eps)\in\xC$, such that:
    \begin{itemize}
        \item $\lambda_i$ is a Dirichlet eigenvalue for $\eps^2\Delta + V_\eps$ in the ball of radius $\rZero$, 
        \item $\mu_i$ is an eigenvalue of the distorted operator $\rotOp$ (\ie{} a resonance of $H_\eps$),
        \item these quantities satisfy: 
            \begin{align*}
    \abs{\xRe(\mu_i) - \lambda_i(\eps)} &\leq \exp(-S(\eps)/\eps) \\
    \abs{\xIm (\mu_i) } &\leq \exp( -S(\eps)/\eps),\\
    \lambda_i(\eps) &\logSim \exp( - d_i / \eps),
    \end{align*}
\item $S(\eps)$ goes to infinity.
    \end{itemize}
  \end{thrm}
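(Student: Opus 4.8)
\emph{Strategy.} The idea is to split $\fullOp$ into an ``interior'' piece, which carries the relevant small eigenvalues, and an ``exterior'' piece, on which we perform the exterior complex scaling of section~\ref{subsec=exteriorScaling}. Concretely I would fix a radius $\rZero\to\infty$ (growing slowly), a rotation angle $\theta=i\beta$ with $0<\beta<2\beta_0$, and introduce: the Dirichlet operator $\intOp=-\eps^2\Delta+V_\eps$ on $\ball{\rZero}$; the Dirichlet exterior operator $\extOp$ on the complement; and its complex-scaled version $\extRotOp$, obtained from the deformation $r\mapsto r_\theta$ that is the identity near $\rZero$ and equals $e^\theta r$ far away (Hypothesis~\ref{hpthss=analyticity} makes this legitimate). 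The distorted operator $\rotOp$ of \eqref{eq=expression_of_the_exterior_scaling} is then, up to a first-order coupling supported near the sphere $\{\abs{x}=\rZero\}$, the direct sum $\intOp\oplus\extRotOp$, and its eigenvalues are by definition (Remark~\ref{rmrk=aboutTheDefOfResonances}) the resonances of $\fullOp$. The goal is to show that each low-lying Dirichlet eigenvalue $\lambda_i(\eps)$ of $\intOp$ is shadowed by exactly one eigenvalue $\mu_i(\eps)$ of $\rotOp$, at distance $\le\exp(-S(\eps)/\eps)$, with $\Im\mu_i$ of the same order.

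\emph{Interior eigenvalues.} On a \emph{fixed} ball $\fixedBall$ the Dirichlet operator $\fixedOp$ has, by the quick-growth results recalled in Theorem~\ref{thrm=interiorSpectrumFixedBall}, exactly $N+1$ eigenvalues below any fixed level, with $\lambda_i\logSim\exp(-d_i/\eps)$ (the lowest governed by $d'_0$, the cost of reaching the boundary). I would then compare $\fixedOp$ with $\intOp$ using the Agmon estimates of section~\ref{sec=Agmon}: the eigenfunctions decay like $\exp(-\agmon(x)/\eps)$ in the Agmon metric attached to $V_\eps\simeq\abs{x}^{-\gamma}$, and since $\gamma<2$ the Agmon distance from the wells $\minima$ to the sphere of radius $\rZero$ grows like $\rZero^{\,1-\gamma/2}\to\infty$. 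Hence enlarging the ball from $R$ to $\rZero$ moves each eigenvalue by at most $\exp(-S(\eps)/\eps)$ with $S(\eps)\asymp\rZero^{\,1-\gamma/2}\to\infty$; this fixes the order of $S(\eps)$ and defines the $\lambda_i(\eps)$.

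\emph{The exterior creates no resonances near the $\lambda_i$.} This is the crux, carried out in section~\ref{sec=exteriorResolvent}. Since $V_\eps\to0$, the essential spectrum of $\extRotOp$ is that of $-\eps^2\Delta_\theta$ on the exterior domain, i.e.\ the ray $e^{-2i\beta}\xR_+$, which is rotated out of the way into the lower half-plane. One must, however, rule out \emph{discrete} eigenvalues of $\extRotOp$ in the small region around $[0,\exp(-d_N/\eps+\delta)]$ where the $\lambda_i$ sit --- equivalently, establish a bound $\norm{(\extRotOp-z)^{-1}}\aleq\eps^{-2}$ there, uniformly in $\eps$. I would do this with semiclassical pseudodifferential calculus: the principal symbol $e^{-2\theta}\abs{\xi}^2+V_\eps(r_\theta,\omega)-z$ is elliptic at infinity precisely because the deformation produces an imaginary part comparable to $\beta\,V_\eps$, and here the lower bound $\abs{V_\eps'(r)}\ge c_V V_\eps(r)$ of Hypothesis~\ref{hpthss=decay} (a non-trapping-type condition, cf.\ Remark~\ref{rmrk=aboutNonTrapping}) together with the Taylor control of $V_\eps(r_\theta,\omega)$ from Proposition~\ref{prpstn=TaylorBoundsOnRTheta} are exactly what is needed; one then builds a parametrix and controls the remainders. \textbf{I expect this step to be the main obstacle}, both because the target spectral region abuts the tip $z=0$ of the rotated essential spectrum, where ellipticity is most delicate, and because every estimate must be tracked uniformly in the semiclassical parameter.

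\emph{Gluing and conclusion.} Finally, in section~\ref{sec=spectralStability}, I would run a spectral-stability argument (a Grushin/Feshbach reduction, or a direct resolvent comparison) between $\rotOp$ and $\intOp\oplus\extRotOp$. Their difference is a first-order operator localized near $\{\abs{x}=\rZero\}$; applied to an interior eigenfunction it is $O(\exp(-S(\eps)/\eps))$ by the Agmon decay of that eigenfunction (and of its $\eps$-gradient), while the exterior resolvent bound from the previous step shows the exterior block contributes nothing in the relevant disc. A Riesz-projector (contour integral) argument then produces, for each $i$, a unique eigenvalue $\mu_i$ of $\rotOp$ with $\abs{\mu_i-\lambda_i}\le\exp(-S(\eps)/\eps)$. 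Since $\lambda_i$ is real and the only non-self-adjoint contribution to $\langle u,\rotOp u\rangle$ --- for $u$ the zero-extension of the interior eigenfunction --- comes from the region $\abs{x}>\rZero$, where $u$ has exponentially small mass, one also gets $\abs{\Im\mu_i}\le\exp(-S(\eps)/\eps)$; combined with $\lambda_i\logSim\exp(-d_i/\eps)$ and $S(\eps)\to\infty$ this is exactly the statement. One checks at the end that $\beta$, $\rZero$ and $S(\eps)$ can be chosen simultaneously so that every error term above is indeed $\le\exp(-S(\eps)/\eps)$.
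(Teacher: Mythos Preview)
Your overall architecture matches the paper's: interior Dirichlet eigenvalues via Agmon decay and comparison with a fixed ball (section~\ref{sec=lowerSpectrum}), an exterior resolvent estimate via pseudodifferential calculus (section~\ref{sec=exteriorResolvent}), and a contour/Riesz-projector stability argument (section~\ref{sec=spectralStability}). Two points deserve correction or comment.

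\textbf{The exterior resolvent bound you state is too strong.} You aim for $\norm{(\extRotOp-z)^{-1}}\aleq\eps^{-2}$ for $z$ near the $\lambda_i$. This cannot hold: the $\lambda_i$ are exponentially close to $0$, and $0$ is the tip of the rotated essential spectrum $e^{-2i\beta}\xR_+$, so the resolvent must blow up at least like $1/\abs{z}\sim\lambda_j^{-1}$ there. The paper targets and proves only $\norm{\extDResolv(\theta,z)}\leq C_\eta\lambda_j^{-(1+\eta)}$ (Theorem~\ref{thrm=boundOnTheExteriorResolvent}), and the symbolic-calculus argument is delicate precisely because one is working at the threshold: the lower bound on the symbol (Proposition~\ref{prpstn=boundsOnTheSymbol}) is only $\abs{h_1-z}\geq c\max(M(x,\xi),\lambda)$, and turning this into an operator bound costs an extra $\lambda^{-\eta}$ via a fractional Calder\'on--Vaillancourt estimate. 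Your gluing step still goes through with this weaker bound, since $S(\eps)\to\infty$ eventually dominates any fixed $d_j$, but you should not claim more than $\lambda_j^{-(1+\eta)}$.

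\textbf{The gluing is done differently.} You propose a Grushin/Feshbach reduction comparing $\rotOp$ directly with $\intOp\oplus\extRotOp$ through a first-order coupling at the sphere. The paper instead follows \cite{CDKS87}: it writes the Dirichlet perturbation $W(\theta,a)=R(\theta,a)-R^D(\theta,a)$ via trace operators on $\partial\largeBall$ (equation~\eqref{eq=decompositionOfW}), passes to the variable $\tilde z=(z-a)^{-1}$ so that $(R^D(\theta,a)-\tilde z)^{-1}=O(\lambda_j^{1-\eta})$ becomes \emph{small}, and runs a Neumann series to show the spectral projectors $P_\zeta$ of \eqref{eq=contourIntegral} are analytic in $\zeta\in[0,1]$. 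The refined localisation $\abs{\mu_i-\lambda_i}\leq\exp(-S(\eps)/\eps)$ is then obtained not from a crude contour radius but from a Lagrange inversion (Proposition~\ref{prpstn=tunnellingExpansion}), driven by the smallness of $t(\eps)=\eps^2\tr\abs{B\PDrchlt A^*}$, which is where the Agmon decay of the interior eigenfunctions at the boundary enters. Your route would likely also work, but the paper's trace-operator machinery is what actually delivers the sharp $\exp(-S(\eps)/\eps)$ error with $S(\eps)\to\infty$ rather than merely $\abs{\mu_i-\lambda_i}\leq c\lambda_i$.
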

  Therefore, we have identified spectral quantities (resonances) $\mu_i$ with the right asymptotic behaviour: their real part is of order $\exp(-d_i/\eps)$, and their imaginary part is much smaller (since $S(\eps)\to \infty$).
  
  Before we go on to the proof, let us mention that we did not address the problem of a probabilistic interpretation of the resonances: we only prove that their asymptotic behaviour is related to the depths of the wells of $F$, which are in turn related to mean exit times from these wells. 


\section{Exterior scaling} 
\label{sec=extScaling}
The exterior dilation (or scaling) is a technical device that allows one to see resonances of an operator as an eigenvalue of some (non self-adjoint) dilated operator. Intuitively, the operator is unchanged inside a large region, but is modified outside it by a change of scale (hence the name).

Let us first use hypothesis \ref{hpthss=decay} to define the region  we will use.
\begin{dfntn}
  \label{dfntn=rEps}
  Let $\rZero = \left(\frac{c_V}{\eps}\right) ^{\frac{1}{\gamma}} $.

  Then $\rZero\to \infty$, and on the sphere  $\partial\largeBall $, $V_\eps(x)\geq \eps$.
\end{dfntn}
This choice of the ball is guided by two constraints:
\begin{itemize}
  \item It must be far enough from the critical values of $F$ (we will see later that the Agmon distance between this ball and the critical points of $F$ should go to infinity),
  \item $V$ on the boundary should be large w.r.t the order of the lower eigenvalues (here $V\approx \eps$, whereas the eigenvalues are exponentially small).
\end{itemize}

The proper way to define exterior scaling is to use polar coordinates.
\subsection{The operators in polar coordinates}
We express the exterior dilation transformation in polar coordinates. To simplify notations, we drop here the dependence on $\eps$ and write $r_0$.
\newcommand{\extSpace}{\compl{\ball{r_0}}}
\newcommand{\extRSpace}{[r_0,\infty)}
\newcommand{\sphere}{\mathcal{S}_{n-1}}
\newcommand{\cartesianLTwo}{L^2(\extSpace)}
\newcommand{\polarLTwo}{L^2(\extRSpace\times\sphere)}

We introduce the change of coordinates:
\[
\begin{aligned}
  \extSpace &\to \extRSpace\times\sphere \\
  x &\mapsto (r(x),\omega(x))
\end{aligned}.
\]
To $f$ we associate $\tilde{f}:(r,\omega) \mapsto f(r\omega)$, and for any $x$, $\tilde{f}_x:\omega \mapsto \tilde{f}(r(x),\omega)$.

The Laplacian decomposes as the sum of a radial operator and a spherical operator, as follows:
\begin{equation}
    \label{eq=known_decomposition}
\Delta f(x) = \frac{1}{r^{n-1}} \devPart{\tilde{f} }{ r} (r^{n-1} \devPart{\tilde{f}}{ r} \tilde{f} )(r(x),\omega(x)) + \LB \tilde{f}_x ( \omega(x)).
\end{equation}
where $\LB$ is the Laplace Beltrami operator on the sphere $S^{n-1}$.

Since we would like the change of coordinates to be unitary in $L^2$, we use a slightly different choice:
\[
  \begin{aligned}
  \mathcal{O}: \cartesianLTwo &\to \polarLTwo \\
  f &\mapsto Of : (r,\omega) \mapsto r^{(n-1)/2} f(r\omega).
  \end{aligned}
\]
In turn, this defines (by conjugation) an equivalence between operators in the two $L^2$ spaces. We also note that the second space can be identified with the tensor product $L^2(\extRSpace)\tens L^2(\sphere)$.

We look for an expression of $\Delta$. It is easy to see that $\devPart{}{r}$ (in polar coordinates)
corresponds to $Df(x)  = (n-1)/(2r(x)) + \omega(x)\cdot \nabla f(x)$ (in other words, $   O D O^{-1} = \devPart{}{r}$).  An easy computation yields 
\begin{align*}
   D^2 f &= \frac{(n-1)(n-3)}{4r^2} f  + \frac{n-1}{r} \omega\cdot \nabla f + \omega\cdot \nabla(\omega\cdot \nabla f) \\
   &=       \frac{(n-1)(n-3)}{4r^2} f  + \frac{n-1}{r} \devPart{\tilde{f}}{r} + \devSec{\tilde{f}}{r} \\
   &=       \frac{(n-1)(n-3)}{4r^2} f  + \Delta f - \frac{1}{r^2} \LB \tilde{f}.
 \end{align*}

Since $ Of (r,\omega) = r^{(n-1)/2} \tilde{f}(r,\omega)$, 
\[ \LB \tilde{f}_x(\omega) = r(x)^{-(n-1)/2} \LB (Of) (r(x),\omega).\]
The decomposition \eqref{eq=known_decomposition} becomes 
\begin{equation*}
  - \Delta = - D^2  + \frac{(n-1)(n-3)}{4r^2} I + \frac{1}{r^2} O^{-1}\LB O .
\end{equation*}
We define $\Lambda = (n-1)(n-3)  + O^{-1}\LB O$, and finally get
\begin{equation}
  \label{eq=decomposition}
 - \Delta = -D^2 + \frac{1}{r^2} \Lambda.
\end{equation}

\subsection{Exterior scaling}
\label{subsec=exteriorScaling}
Let $\theta\in\xR$. The exterior dilation of an operator $H$ is defined in polar coordinates by:
\[ H_\theta = U(\theta) H  U(\theta)^{-1}\]
where $U(\theta)f(r,\omega) = f(r_\theta,\omega)$, and $r_\theta = r + (r-r_0)e^\theta$. It is easily seen that $D(\theta) = e^{-\theta}D$, and $\frac{\Lambda}{r^2}(\theta) =  \frac{1}{r_\theta^2} \Lambda$. Therefore, if $H = -\eps^2\Delta + V_\eps$ (on the outside of the ball), 
then
\begin{equation}
  \label{eq=expression_of_the_exterior_scaling}
   H(\theta) =  -\eps^2 e^{-2\theta} D^2 + \eps^2 \frac{\Lambda}{r_\theta^2}  + V_\eps(r_\theta,\omega).
\end{equation}
We refer to the appendix for the expression of the symbol of this operator.

This modification of the operator is then extended to complex $\theta$  by analyticity (exterior complex scaling). We will then define resonances to be eigenvalues of $H(\theta)$~: this coincides with the definition in terms of continuation of the resolvent, at least for simple complex scaling (\cf{} \cite{RS78_IV}).

\subsection{Some operators}
\label{sec=operators}
We write down some of the auxiliary operators involved here, for future reference. 
\begin{itemize}
    \item $\fullOp = -\eps^2\Delta + V_\eps$ is the operator we would like to
	study.
    \item $H^{\rZero} = \intOp \oplus \extOp$ is the operator with the same
	symbol, but with a Dirichlet condition on the sphere of radius
	$\rZero$. It decomposes into an interior and an exterior part.
    \item $\rotOp$ is the exterior dilation of $\fullOp$ (outside the sphere
	of radius $\rZero$).
    \item $\extRotOp$ is the exterior dilation of $\extOp$. 
\end{itemize}

\section{Preliminary Agmon-type estimates}
\label{sec=Agmon}
\subsection{The decay of eigenfunctions}
The estimates we present here are in the spirit of Agmon's \cite{Ag82}; we refer to \cite{HS84} and the online course \cite{Hel95} for details. We will follow the last two references, making only slight modifications of the arguments.

The estimates we seek are a way to express the following (informal) statement.
\begin{prpstn}
  \label{prop=informalStatement}
  A Schrödinger operator $-\eps^2 \Delta + V$  may be  approximated by a sum of \emph{harmonic oscillators} ($ - \eps^2 \Delta + c_a(x-a)^2$) located at the minima of $V$.  In particular, the eigenfunctions associated to an eigenvalue coming from an harmonic oscillator at $a$ are concentrated near $a$. 
\end{prpstn}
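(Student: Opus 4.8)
The plan is to make this informal statement rigorous through two classical semiclassical tools: an IMS-type localisation formula and Agmon-type weighted estimates on eigenfunctions. Since $V$ decays at infinity, the low-lying spectrum has to be studied not on $\fullOp$ (whose essential spectrum reaches down to $0$) but on the Dirichlet realisation $\intOp$ of $-\eps^2\Delta+V$ in $\largeBall$, which does have discrete spectrum below $\eps$; it is for eigenfunctions of $\intOp$ that the concentration estimates will be stated. Write $\minima$ for the (finite) set of local minima of $V$ — here the critical points of $F$, all contained in a fixed compact set — and equip $\xR^d$ with Agmon's degenerate metric $V(x)\,dx^2$, with geodesic distance $\agmon(x,y)$; set $\agmon(x)=\agmon(x,\minima)$.

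First I would establish the \emph{harmonic approximation}. Choose a partition of unity $\sum_{a\in\minima}\chi_a^2+\chi_\infty^2=1$ with each $\chi_a$ supported in a small fixed ball around $a$ and $\chi_\infty$ supported away from $\minima$. The IMS formula gives
\[
   \op \;=\; \sum_{a\in\minima}\chi_a\,\op\,\chi_a \;+\;\chi_\infty\,\op\,\chi_\infty \;-\;\eps^2\sum_j\abs{\grad\chi_j}^2 .
\]
On $\supp\chi_\infty$ one has $V\geq c>0$, so that term is bounded below by $c-\bigO(\eps^2)$; near $a$, Taylor-expanding $V(x)=\tfrac12\langle\mathrm{Hess}\,V(a)(x-a),x-a\rangle+\bigO(\abs{x-a}^3)$ and rescaling $x-a=\sqrt\eps\,y$ identifies the leading behaviour of $\chi_a\,\op\,\chi_a$ with that of the model oscillator $-\eps^2\Delta+\tfrac12\langle\mathrm{Hess}\,V(a)\,\cdot,\cdot\rangle$, whose spectrum is $\{\eps\sum_k(2n_k+1)\mu_k(a):n\in\xN^d\}$, $\mu_k(a)$ being the square roots of the eigenvalues of $\tfrac12\mathrm{Hess}\,V(a)$. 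A min-max argument — lower bound from the identity above, upper bound from cut-off rescaled oscillator eigenfunctions — then shows that the eigenvalues of $\intOp$ below a fixed threshold are, to leading order in $\eps$, those of the direct sum of the harmonic oscillators attached to the points of $\minima$.

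Next I would prove the \emph{Agmon concentration estimate}, which is the real content. Let $\intOp u=Eu$ with $E$ below the threshold. For any bounded Lipschitz $\phi$ vanishing on $\partial\largeBall$, expanding $\langle e^{2\phi/\eps}(\op-E)u,u\rangle=0$ and integrating by parts yields the Agmon identity
\[
   \eps^2\norm{\grad(e^{\phi/\eps}u)}^2 + \left\langle (V-\abs{\grad\phi}^2-E)\,e^{\phi/\eps}u,\, e^{\phi/\eps}u\right\rangle = 0 .
\]
Taking $\phi=(1-\delta)\agmon(\cdot)$, the eikonal inequality $\abs{\grad\agmon}^2\leq V$ together with the smallness of $E$ makes $V-\abs{\grad\phi}^2-E\geq 0$ outside a small neighbourhood $U_\delta$ of $\minima$; the contribution of $U_\delta$, on which $e^{\phi/\eps}$ is bounded, is absorbed into the a priori normalisation $\norm{u}=1$. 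One concludes $\norm{e^{(1-\delta)\agmon(\cdot)/\eps}u}\aleq 1$, i.e. $u$ concentrates within Agmon distance $\bigO(\eps)$ of $\minima$; replacing $\agmon$ by the distance to the single well $a$ picked out by $E$ — which amounts to cutting the metric along the energy barriers, the place where the depths $d_i$ enter — upgrades this to concentration near $a$, with exponentially small mass elsewhere, as claimed.

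The main obstacle is the Agmon step. One must exhibit a \emph{globally} Lipschitz weight $\phi$ satisfying the eikonal inequality, control the non-sign-definite region $U_\delta$ uniformly in $\eps$, and — the feature specific to this paper — ensure that $\agmon(\minima,\partial\largeBall)\to\infty$ as $\eps\to0$, so that imposing a Dirichlet condition on $\partial\largeBall$ does not spoil the estimates. This last point is exactly where hypothesis \ref{hpthss=decay} with $\gamma<2$ is used: since $V\gtrsim\abs{x}^{-\gamma}$, the Agmon length of a radial segment running from a fixed ball out to $\rZero\asymp\eps^{-1/\gamma}$ grows like $\rZero^{\,1-\gamma/2}\asymp\eps^{-(1/\gamma-1/2)}\to\infty$, whereas for $\gamma>2$ it would remain bounded and the whole scheme would collapse — in agreement with the remark following hypothesis \ref{hpthss=decay}.
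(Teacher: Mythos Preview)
This proposition is explicitly presented in the paper as an \emph{informal} motivational statement (the text introduces it as ``the following (informal) statement'' and immediately follows it with a remark about when ``this informal description is accurate''); the paper gives no proof of it as such. The rigorous content the paper actually develops is split: the concentration of eigenfunctions is Theorem~\ref{thm=decayOfEigenfunctions} (proved via the Agmon identity with weight $\phi=(1-\tilde\delta)\agmon(\cdot,\minima)$, exactly as you sketch), while the harmonic-approximation/IMS argument appears later, in Section~\ref{sec=lowerSpectrum}, and even there the paper does not carry out the Taylor-expansion-and-rescaling step you describe but instead quotes the spectral description of $\fixedOp$ from \cite{HN06} as a black box (Theorem~\ref{thrm=interiorSpectrumFixedBall}).

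Your sketch is correct and more complete than what the paper writes down. Two small remarks. First, the paper's Theorem~\ref{thm=decayOfEigenfunctions} only proves concentration near the \emph{whole} set $\minima$, not near the single well $a$ associated to a given eigenvalue; your last step (``cutting the metric along the energy barriers'') is the standard refinement but is not actually carried out in the paper. Second, your observation that $\agmon(\minima,\partial\largeBall)\to\infty$ because $\gamma<2$ is exactly right and matches the paper's remark after Hypothesis~\ref{hpthss=decay}, but note that in the proof of Theorem~\ref{thm=decayOfEigenfunctions} the paper works on a generic bounded domain $\Omega$ and uses compactness (``here we use strongly the fact that our domain is bounded''), postponing the specific choice $\Omega=\largeBall$ and the growth of the Agmon distance to later sections.
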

\begin{rmrk}
  This informal description is accurate when we study the spectrum below $\liminf V$. Let us note however two differences between our case and the usual one. The first is that our $V$ depends on $\eps$. This is what explains the appearance of \emph{exponentially small eigenvalues}. The additional difficulty of our degenerate case (where $\liminf V = 0$) is that the spectra of the harmonic oscillators at the minima is lost in the essential spectrum coming from the behavior of $V$ at infinity. 
\end{rmrk}
We start with the following ``basic'' estimate.
\begin{prpstn}[\cite{Hel95}, prop. 8.2.1]
  If $\Omega$ is a bounded domain in $\R^d$ with $\mathcal{C}^2$ boundary, $V$ is continuous on $\bar{\Omega}$ and $\phi$ is a real valued lipschitzian function on $\bar{\Omega}$, then for any $u\in \mathcal{C}^2(\bar{\Omega}, \R)$ such that $u\on{\partial{\Omega}} = 0$, 
  \begin{equation}
    \label{eq=basicAgmon}
    \begin{aligned}
    \eps^2 &\int \abs{\nabla ( \exp(\phi/\eps) u)}^2 dx + \int ( V - \abs{\nabla \phi }^2) \exp( 2\phi/\eps) u^2 dx \\
    = &\int \exp(2\phi/\eps) (-\eps^2 \Delta u + Vu) u dx.
  \end{aligned}
  \end{equation}
\end{prpstn}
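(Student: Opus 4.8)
\textbf{Proof proposal for the basic Agmon identity \eqref{eq=basicAgmon}.}
The plan is to derive this as a straightforward integration-by-parts identity, treating the weighted function $w = \exp(\phi/\eps) u$ as the main object. First I would set $w = e^{\phi/\eps} u$, so that $u = e^{-\phi/\eps} w$, and compute $\nabla u = e^{-\phi/\eps}\left(\nabla w - \frac{1}{\eps}(\nabla\phi) w\right)$. Since $u$ vanishes on $\partial\Omega$, so does $w$ (the exponential weight is bounded and non-vanishing on $\bar\Omega$ because $\phi$ is Lipschitz, hence bounded), which is what justifies the boundary terms dropping out in the Green's formula below.

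The key computation is to expand $\int e^{2\phi/\eps}(-\eps^2\Delta u)u\,dx$ by Green's formula. Write $\int e^{2\phi/\eps}(-\eps^2\Delta u)u\,dx = \eps^2\int \nabla\!\left(e^{2\phi/\eps}u\right)\cdot\nabla u\,dx$, using $u\on{\partial\Omega}=0$ to kill the boundary integral. Now $\nabla\!\left(e^{2\phi/\eps}u\right) = e^{2\phi/\eps}\left(\nabla u + \frac{2}{\eps}(\nabla\phi)u\right)$, and also $e^{2\phi/\eps}u = e^{\phi/\eps}w$, so one can rewrite everything in terms of $w$. Substituting $\nabla u = e^{-\phi/\eps}\left(\nabla w - \frac{1}{\eps}(\nabla\phi)w\right)$ gives, after collecting the $e^{\phi/\eps}\cdot e^{-\phi/\eps}=1$ factors,
\begin{align*}
\eps^2\int \nabla\!\left(e^{2\phi/\eps}u\right)\cdot\nabla u\,dx
&= \eps^2\int \left(\nabla w + \tfrac{1}{\eps}(\nabla\phi)w\right)\cdot\left(\nabla w - \tfrac{1}{\eps}(\nabla\phi)w\right)dx \\
&= \eps^2\int \abs{\nabla w}^2\,dx - \int \abs{\nabla\phi}^2 w^2\,dx,
\end{align*}
the cross terms $\eps(\nabla\phi)w\cdot\nabla w$ cancelling exactly. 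Finally, the potential term is immediate: $\int e^{2\phi/\eps}(Vu)u\,dx = \int V e^{2\phi/\eps}u^2\,dx = \int V w^2\,dx$. Adding the two pieces yields
\[
\int e^{2\phi/\eps}(-\eps^2\Delta u + Vu)u\,dx = \eps^2\int \abs{\nabla w}^2\,dx + \int\left(V - \abs{\nabla\phi}^2\right)w^2\,dx,
\]
which is precisely \eqref{eq=basicAgmon} once one recalls $w = e^{\phi/\eps}u$ and $w^2 = e^{2\phi/\eps}u^2$.

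The only subtlety — and the one point that deserves care rather than being an obstacle — is the regularity bookkeeping needed to justify Green's formula: $u\in\mathcal{C}^2(\bar\Omega)$ and $V$ continuous on $\bar\Omega$ make $-\eps^2\Delta u + Vu$ continuous, $\phi$ Lipschitz makes $\nabla\phi\in L^\infty$ and $e^{\phi/\eps}$ Lipschitz (so $w$ is Lipschitz and $\nabla w$ exists a.e. and is bounded), and the $\mathcal{C}^2$ boundary is exactly what is needed for the divergence theorem with the vanishing trace of $u$. Since $\phi$ is only Lipschitz, $\nabla\phi$ is merely in $L^\infty$, but it enters only algebraically (multiplied by $w^2$ or $w\,\nabla w$, both integrable), so no differentiation of $\phi$ is required and the cancellation of the cross terms is a pointwise-a.e. identity that survives integration. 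No genuine difficulty remains beyond this routine verification.
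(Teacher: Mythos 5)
Your proof is correct, and it is essentially the computation the paper has in mind: substitute the weighted function $w=e^{\phi/\eps}u$ and apply Green's formula. The one genuine difference is in how the integration by parts is organized. The paper's hint is to plug $v=e^{\phi/\eps}u$ into $\int\abs{\nabla v}^2=-\int \Delta v\cdot v$, which forces one to expand $\Delta v$ and hence differentiate $\phi$ twice; that is why the paper must add ``the general case follows by a regularisation argument'' to cover Lipschitz $\phi$. You instead integrate by parts on $\Delta u$ against the weight $e^{2\phi/\eps}u$, so that only $\nabla\phi$ ever appears (in the products $\left(\nabla w\pm\tfrac{1}{\eps}(\nabla\phi)w\right)$, whose cross terms cancel pointwise a.e.). This buys you a proof that works directly for Lipschitz $\phi$ with no regularisation step: the divergence theorem is applied to the Lipschitz field $e^{2\phi/\eps}u\,\nabla u$, whose boundary contribution vanishes because $u\on{\partial\Omega}=0$, exactly as you note. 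So your version is marginally cleaner than the paper's; the algebra and the final identity are identical.
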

To prove this with a regular $\phi$, just set $v = \exp(\phi/\eps) u$ in the Green--Riemann formula $\int \abs{\grad v}^2 = - \int \Delta v \cdot v$; the general case follows by a regularisation argument.

If we plug a ``good'' $\phi$ into this estimate, and apply it to an eigenfunction $u$, we obtain $L^2$ estimates on the weighted function $u\exp(\phi/\eps)$. This will tell us that $u$ must be small when $\phi$ is big, or in other words that $u$ is localized near the small values of $\phi$.

The ``good'' $\phi$ turns out to be related to a new metric, which takes into account the function $V$. 
\begin{dfntn}
  The \emph{Agmon metric} is defined by $Vdx^2$, where $dx^2$ is the euclidean metric on $\R^d$. In other words,
  \[
  \agmon(x,y ) = \inf \left\{ \int_0^1 \sqrt{V(\gamma(t)) }\abs{\gamma'(t)} dt ; \gamma \in \Gamma(x,y) \right\}
  \]
  where $\Gamma(x,y)$ is the set of $\mathcal{C}^1$ paths joining $x$ to $y$. 
\end{dfntn}
\begin{rmrk}
  In the usual setting, the potential does not depend on $\eps$. Here, $V_\eps$ does vary with $\eps$; however, we define the Agmon metric using only $V= \abs{\nabla F}^2$. We could probably drop the metric entirely and use $F$ instead; we keep it for the sake of intuition and comparison with known results. 
\end{rmrk}
This metric degenerates on the minimas of $V$ (\ie{} the critical points of $F$, \cf{} \eqref{eq=VandF}), and it can be shown that:
\begin{align*}
  \agmon(x,y) &\geq \abs{F(x) - F(y)}, \\
  \nabla_y \agmon(A,y) & \leq V(y),
\end{align*}
for any closed set $A$ and almost every $y$.  Once more, we refer \eg{} to \cite{Hel95} (sec. 8.3) for details. 

The main result of this section is the following rigorous statement in the spirit of proposition \ref{prop=informalStatement}. 
\begin{thrm}[A rough decay estimate]
    \label{thm=decayOfEigenfunctions}
  Let $\Omega$ be a bounded domain, $\op = - \eps^2 \Delta + V_\eps$ in $\Omega$ with Dirichlet boundary condition. Let $\lambda$ be an eigenvalue of $\op$ going to zero (when $\eps \to 0$), and $u$ be a corresponding (normalized) eigenfunction. Let $\minima$ be the set of global minima of $V$ (\ie{} the critical points of $F$).
  Then for any $\delta >0$, there exists $\eps_0$ and a constant $C_\delta$ such that:
  \begin{equation}
    \label{eq=decayOfEigenfunctions-fixedOp}
    \forall \eps<\eps_0, \quad
    \lTwoNrm{ u \exp ( d(x) / \eps ) } + \lTwoNrm{ \nabla { u \exp ( d(x) / \eps ) }} 
    \leq C_\delta \exp ( \delta / \eps),
  \end{equation}
  where $d(x) = \agmon (x, \minima )$.
\end{thrm}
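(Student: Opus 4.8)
The plan is to feed a carefully chosen Lipschitz weight into the basic estimate \eqref{eq=basicAgmon} and then chase the error terms. Fix the target $\delta>0$, introduce an auxiliary parameter $\eta\in(0,1)$ (to be fixed at the very end as a function of $\delta$), and use the weight $\phi=(1-\eta)d$, where $d(x)=\agmon(x,\minima)$. Since $\bar\Omega$ is compact and $\abs{\nabla d}^2\leq V$ a.e.\ with $V$ bounded on $\bar\Omega$, this $\phi$ is Lipschitz, and it satisfies the key pointwise bound $\abs{\nabla\phi}^2\leq(1-\eta)^2V\leq(1-\eta)V$ almost everywhere. Applying \eqref{eq=basicAgmon} to the eigenfunction $u$ (so the right-hand side collapses to $\lambda\int e^{2\phi/\eps}u^2$) gives the identity
\[
\eps^2\int\abs{\nabla(e^{\phi/\eps}u)}^2\,dx+\int\bigl(V_\eps-\abs{\nabla\phi}^2-\lambda\bigr)e^{2\phi/\eps}u^2\,dx=0.
\]

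Next I would split $\Omega$ into the small neighbourhood $W=\{x\in\Omega:d(x)<\delta_0\}$ of $\minima$, with $\delta_0$ a fixed fraction of $\delta$, and its complement. On $\Omega\setminus W$ the set $\{d\geq\delta_0\}\cap\bar\Omega$ is compact and disjoint from $\minima$, hence $V\geq c_{\delta_0}>0$ there; combining this with $V_\eps=V-\tfrac\eps2\Delta F\geq V-C\eps$, the bound $\abs{\nabla\phi}^2\leq(1-\eta)V$, and $\lambda\to0$, one gets $V_\eps-\abs{\nabla\phi}^2-\lambda\geq\tfrac12\eta c_{\delta_0}$ as soon as $\eps$ is small enough (depending on $\delta_0,\eta$). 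On $W$ I would only use the crude uniform bound $\abs{V_\eps-\abs{\nabla\phi}^2-\lambda}\leq M$ together with $e^{2\phi/\eps}\leq e^{2\delta_0/\eps}$ and $\int_W u^2\leq\lTwoNrm{u}^2=1$. Inserting these into the identity yields
\[
\eps^2\int\abs{\nabla(e^{\phi/\eps}u)}^2\,dx+\tfrac12\eta c_{\delta_0}\int_{\Omega\setminus W}e^{2\phi/\eps}u^2\,dx\leq M\,e^{2\delta_0/\eps},
\]
from which, after adding back the contribution of $W$, one reads off $\lTwoNrm{e^{\phi/\eps}u}^2\leq C_{\delta_0}e^{2\delta_0/\eps}$ and $\eps^2\lTwoNrm{\nabla(e^{\phi/\eps}u)}^2\leq M\,e^{2\delta_0/\eps}$.

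Finally I would trade the weight $\phi=(1-\eta)d$ for $d$ itself. Since $d\leq D_0:=\sup_{\bar\Omega}d<\infty$, we have $e^{d/\eps}\leq e^{\eta D_0/\eps}e^{\phi/\eps}$, so fixing $\eta$ with $\eta D_0\leq\delta_0$ turns the first bound into $\lTwoNrm{u\,e^{d/\eps}}\leq C\,e^{2\delta_0/\eps}$. For the gradient, writing $\nabla(u\,e^{d/\eps})=e^{\eta d/\eps}\nabla(u\,e^{\phi/\eps})+\tfrac\eta\eps(\nabla d)\,u\,e^{d/\eps}$ and using $\abs{\nabla d}\leq\sqrt{\norm{V}_\infty}$ together with the two estimates above produces a factor $1/\eps$, which is absorbed into $e^{\delta_0/\eps}$ for $\eps$ small. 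Choosing $\delta_0$ a small enough fixed fraction of $\delta$ (say $\delta/4$) then gives exactly \eqref{eq=decayOfEigenfunctions-fixedOp}.

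I expect the only genuine difficulty to be the one exploited in the second step: guaranteeing that the effective potential $V_\eps-\abs{\nabla\phi}^2-\lambda$ is bounded below by a strictly positive constant outside an arbitrarily small neighbourhood of the wells. This works precisely because on $\{d\geq\delta_0\}$ the function $V$ is bounded below by a positive constant, while the two perturbations $\tfrac\eps2\Delta F$ and $\lambda$ both vanish as $\eps\to0$ --- this is where the hypotheses ``$\lambda\to0$'' and the structure $V_\eps=V-\tfrac\eps2\Delta F$ are really used, and it is what forces the unavoidable loss $e^{2\delta_0/\eps}$ near $\minima$. A minor technical point is that \eqref{eq=basicAgmon} is stated for $u\in\mathcal{C}^2(\bar\Omega)$; as $V_\eps$ is smooth this follows from elliptic regularity when $\partial\Omega$ is smooth (as for the balls used later), and otherwise by a routine exhaustion of $\Omega$ by smooth subdomains.
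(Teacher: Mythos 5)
Your proposal is correct and follows essentially the same route as the paper's proof: the basic identity \eqref{eq=basicAgmon} with weight $(1-\eta)\agmon(\cdot,\minima)$, a split into a small neighbourhood of $\minima$ and its complement (you cut on $\{d<\delta_0\}$ where the paper cuts on $\{V<\tilde\delta\}$, but the two are interchanged by the same compactness argument), positivity of the effective potential outside, the crude bound plus normalization inside, and the final trade of $(1-\eta)d$ for $d$ at the cost of $e^{\delta/(C\eps)}$. The only difference is presentational: you make explicit the product-rule step and the absorption of the $1/\eps$ factors for the gradient bound, which the paper leaves as ``obtained similarly''.
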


Before we turn to the proof of this result, let us mention here that very similar ideas will be used later when we reconstruct a resolvent from two different parts. Since the operators involved will be a bit different, we delay that discussion (but see the proof of theorem \ref{prpstn=resolventsGluing}, section\ref{sec=putTogether}).

\subsection{Proof of theorem \ref{thm=decayOfEigenfunctions}}
The proof follows closely the one of Theorem 8.4.1  of \cite{Hel95} (the changes come from the dependance of $V$ in $\eps$).

  Let $\deltaTilde$ be a small number (to be fixed later, depending on $\Omega$ and $\delta$).
  We use \eqref{eq=basicAgmon} with $V = V_\eps - \lambda$ and $\phi (\cdot)= (1-\deltaTilde)\agmon(\minima,\cdot)$.
  Since $u$ is an eigenvector, the r.h.s disappears and we get:
  \[
    \eps^2 \int \abs{\nabla ( \exp(\phi/\eps) u)}^2 dx + \int ( V_\eps - \lambda - \abs{\nabla \phi }^2) \exp( 2\phi/\eps) u^2 dx 
    =0.
  \]
  We cut the second integral in two parts, setting $ \Omega_+ = \{x; V \geq \deltaTilde \}$, $\Omega_- = \Omega\setminus \Omega_+$. 
  \begin{align}
      \label{eq=firstStepLHS}
    \eps^2 \int & \abs{\nabla ( \exp(\phi/\eps) u)}^2 dx
    + \int_{\Omega_+} (
        V_\eps - \lambda - \abs{\nabla \phi }^2
      ) \exp( 2\phi/\eps) u^2 dx  \\
    &= -\int_{\Omega_-}  (
        V_\eps - \lambda - \abs{\nabla \phi }^2) \exp( 2\phi/\eps)
      )u^2 dx \notag \\
    &\leq \sup_{\Omega_-}\left( \abs{V_\eps - \lambda - \abs{\nabla \phi}^2 }\right) 
    \int_{\Omega_-} \exp(2\phi/\eps) u^2 dx \notag \\
    &\leq C \int_{\Omega_-} \exp(2\phi/\eps) u^2 dx
    \label{eq=firstStep},
  \end{align}
  where $C$ does not depend on $\eps$ and $\deltaTilde$ (indeed, $\lambda$ goes to zero with $\eps$, and $\abs{V_\eps}\leq \sup_{\Omega} \abs{\grad F}^2 + \eps \sup_\Omega \abs{\Delta F} \leq C$, by compactness).
  We now bound the left-hand side \eqref{eq=firstStepLHS} from below. 
  \begin{align*}
  V_\eps - \lambda - \abs{\nabla \phi}^2 
  &=    V - \frac{1}{2} \eps \Delta F - \lambda - (1-\deltaTilde)^2\abs{ \nabla \agmon(x,\minima)}^2 \\
  &\geq V - \frac{1}{2} \eps \Delta F - \lambda - (1-\deltaTilde)^2 V \\
  &\geq   - \frac{1}{2} \eps \Delta F - \lambda + \deltaTilde(2-\deltaTilde) V.
  \end{align*}
  For $x\in\Omega_+$,   $V\geq \deltaTilde$. By compactness, $\Delta F$ is bounded, so that:
  \[
    V_\eps - \lambda - \abs{\nabla \phi}^2 \geq \deltaTilde^2(2-\deltaTilde) + C(\eps),
  \]
  where $C(\eps)$ goes to zero. For $\epsilon$ sufficiently small, (depending on $\deltaTilde$), 
  \[
    V_\eps - \lambda - \abs{\nabla \phi}^2 \geq \deltaTilde^2.
  \]
  We inject this in $\eqref{eq=firstStepLHS}\leq \eqref{eq=firstStep}$ to obtain:
  \[
    \eps^2 \int 
    \abs{\nabla ( \exp(\phi/\eps) u)}^2 dx
    + \deltaTilde^2 \int_{\Omega_+} \exp( 2\phi/\eps) u^2 dx 
    \leq C \int_{\Omega_-} \exp(2\phi/\eps) u^2 dx
  \]
  We add $\deltaTilde\int_{\Omega_-} \exp(2\phi/\eps) u^2 dx$ on both sides to get
  \[
  \begin{aligned}
    \eps^2 \int &\abs{\nabla ( \exp(\phi/\eps) u)}^2 dx
    + \deltaTilde^2 \int  \exp( 2\phi/\eps) u^2 dx \\
    &\leq (C + \deltaTilde^2) \int_{\Omega_-} \exp(2\phi/\eps) u^2 dx
  \end{aligned}
  \]
  On the r.h.s, we bound the $\phi$ from above, and incorporate $\deltaTilde$ into (a new) $C$:
  \begin{align*}
    \eps^2 \int & \abs{\nabla ( \exp(\phi/\eps) u)}^2 dx
    + \deltaTilde^2 \int  \exp( 2\phi/\eps) u^2 dx \\
    &\leq C \int_{\Omega_- } \exp\left( \frac{2(1-\deltaTilde)}{\eps} \agmon(x,\minima) \right) u^2 dx \\
    &\leq C   \exp\left(\frac{2(1-\deltaTilde)}{\eps} \sup_{\Omega_-}\agmon(x,\minima) \right),
  \end{align*}
  since $u$ is normalized. 
  The continuity of $\agmon$, of $V$ and a compactness argument shows that we can choose $\deltaTilde$ small enough to ensure:
  \[ 2 \agmon(x,\minima) \leq \delta/3 \]
  on $\Omega_-$ (which depends on $\deltaTilde$). In words, the only places where $V$ can be small is on small balls near its minima. The estimate becomes
  \[
    \eps^2 \int \abs{\nabla ( \exp(\phi/\eps) u)}^2 dx
    + \deltaTilde^2 \int  \exp( 2\phi/\eps) u^2 dx 
    \leq C \exp \left( \frac{\delta}{3\eps} \right).
  \]
  It is now easy to obtain the bound we seek. Indeed, we may choose $\deltaTilde$ such that 
  \[
    \deltaTilde \sup_{\Omega} \agmon(x) \leq \delta /3,
  \]
  (here we use strongly the fact that our domain is bounded), and $\eps^{-2} \leq C\exp(\delta/(3\eps))$. Remembering that $\phi = (1-\deltaTilde)\agmon(\cdot,\minima)$, we obtain:
  \[
  \int \exp (2\agmon(x)/\eps) u^2(x) dx \leq \frac{C}{\deltaTilde^2}\exp\left( \frac{ \delta}{\eps} \right).
  \]
  The bound on $\int \abs{\nabla(\exp (\agmon(x)/\eps)u)}^2$ is obtained similarly.


\section{The lower spectrum of the interior operator}
\label{sec=lowerSpectrum}
We are now in a position to describe the bottom of the spectrum of the operator $\intOp$. Let $d_0=\infty,d_1,d_2,\ldots, d_N$ be the critical heights of the potential $F$. 
\begin{thrm}
  \label{thm=spectrumOfTheInteriorOperator}
 There exist  a $d'_0>d_1$, $N+1$ functions $\lambda_0(\eps), \lambda_1(\eps), \ldots, \lambda_N(\eps)$ such that:
 \begin{align*}
     \lambda_0(\eps) &= \bigO(e^{- d'_0/\eps})\\
     \forall 1\leq i \leq N, \quad\lambda_i(\eps) &\logSim \exp\left( - \frac{d_i}{\eps} \right) \\
   \sigma(\intOp)  &= \{ \lambda_i(\eps), 0\leq i \leq N\} \cup S,
 \end{align*}
 where $S\subset [C\eps,\infty)$. 
\end{thrm}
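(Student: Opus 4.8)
The strategy is to compare the interior operator $\intOp = -\eps^2\Delta + V_\eps$ on $\largeBall$ with Dirichlet conditions to the operator $\fixedOp = -\eps^2\Delta + V_\eps$ on the \emph{fixed} ball $\fixedBall$ (of radius $R$ appearing in the hypotheses), and to invoke the known ``quick growth'' spectral analysis on $\fixedBall$. The point is that on $\fixedBall$ the potential $V_\eps = \tfrac12|\nabla F|^2 - \tfrac\eps2\Delta F$ is, up to the $O(\eps)$ perturbation, $V = \tfrac12|\nabla F|^2$, which vanishes exactly to second order at the critical points $x_0,\dots,x_N$ of $F$; this is precisely the WKB/harmonic-approximation situation analyzed in \cite{HKN04,HN06}, which is the content one should cite as ``theorem \ref{thrm=interiorSpectrumFixedBall}''. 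That analysis produces the eigenvalues $\lambda_0 = O(e^{-d'_0/\eps})$ (with $d'_0$ the cost of reaching $\partial\fixedBall$ from $x_0$) and $\lambda_i \logSim \exp(-d_i/\eps)$ for $1\le i\le N$, together with a gap: the rest of the spectrum of $\fixedOp$ lies above $C\eps$ (coming from the harmonic oscillator ground-state energies $\sim\eps$, which here scale like $\eps^2\cdot\eps^{-1}=\eps$ because of the $\eps^2\Delta$ normalization).

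First I would set up the comparison: since $R < \rZero$ and on the annulus $\rZero \setminus \fixedBall$ we have $V_\eps \gtrsim |x|^{-\gamma} \gtrsim \rZero^{-\gamma} \sim \eps/c_V$ by definition \ref{dfntn=rEps} and hypothesis \ref{hpthss=decay}, the potential stays of order at least $\eps$ throughout the annulus. Thus any eigenvalue of $\intOp$ below, say, $c\eps$ must have an eigenfunction concentrated inside $\fixedBall$. I would make this quantitative with the Agmon estimate, Theorem \ref{thm=decayOfEigenfunctions}: an eigenfunction $u$ of $\intOp$ with eigenvalue $\to 0$ satisfies $\|u\exp(d(x)/\eps)\|_2 \le C_\delta e^{\delta/\eps}$ where $d(x) = \agmon(x,\minima)$, and since $\minima \subset \fixedBall$ and $\agmon$ grows (because $V \gtrsim |x|^{-\gamma}$ keeps the Agmon distance from the wells to $\partial\fixedBall$ bounded below by a fixed constant, indeed to $\partial\largeBall$ it tends to infinity), $u$ is exponentially small near $\partial\fixedBall$. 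A standard IMS/cutoff localization argument (multiply $u$ by a cutoff $\chi$ supported in $\fixedBall$, equal to $1$ on a slightly smaller ball) then shows $\chi u$ is an approximate eigenfunction of $\fixedOp$ with error $\exp(-c/\eps)$, and conversely the low-lying eigenfunctions of $\fixedOp$ are exponentially small near $\partial\fixedBall$ and extend by $0$ to approximate eigenfunctions of $\intOp$. Hence the spectra of $\intOp$ and $\fixedOp$ below $c\eps$ agree up to exponentially small errors, and the asymptotics $\lambda_i \logSim \exp(-d_i/\eps)$, $\lambda_0 = O(e^{-d'_0/\eps})$ are inherited from \cite{HKN04}. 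The statement that $S \subset [C\eps,\infty)$ for the remaining spectrum of $\intOp$ follows because (a) $\fixedOp$ has a gap above $C\eps$ by the known result, and (b) on the annulus $V_\eps \gtrsim \eps$ confines away any additional low spectrum, which one checks with a min-max argument using the same partition of unity: for $\psi$ orthogonal to the low eigenspace, $\langle \intOp\psi,\psi\rangle \ge c\eps\|\psi\|^2$, splitting the Dirichlet form over $\fixedBall$ and its complement.

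The main obstacle is item (b), ruling out spurious low-lying spectrum of $\intOp$ created by the large annulus $\largeBall\setminus\fixedBall$ — a priori a very long, thin region where $V_\eps$, though bounded below by $\sim\eps$, decays, and where the effective one-dimensional radial operator $-\eps^2 e^{-2\theta}D^2 + \eps^2\Lambda/r^2 + V_\eps$ could conceivably have eigenvalues near $\eps$. The resolution is that the Dirichlet condition at $\partial\fixedBall$ together with the lower bound $V_\eps \ge c_V|x|^{-\gamma} \ge c\eps$ on the whole annulus forces the quadratic form to be $\ge c\eps$ there; more carefully one notes the kinetic term $-\eps^2\Delta$ on an annulus of inner radius $R$ with a Dirichlet wall contributes a positive amount, and the potential alone already gives $\ge c\eps$, so the min-max value of the ``exterior piece'' is bounded below by $c\eps$. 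Combining the interior and exterior lower bounds via the IMS localization formula (whose error $\eps^2|\nabla\chi|^2$ is $O(\eps^2)$, negligible against $\eps$) closes the gap. I would also need to check the Agmon distance claim $\agmon(\minima,\partial\largeBall)\to\infty$: with $V\gtrsim|x|^{-\gamma}$ and $\gamma<2$, $\int^{\rZero}|x|^{-\gamma/2}\,d|x| \sim \rZero^{1-\gamma/2}\to\infty$, which is exactly where $\gamma<2$ enters — the same computation promised in the remark after hypothesis \ref{hpthss=decay}.
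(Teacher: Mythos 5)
Your proposal is correct and follows essentially the same route as the paper: an IMS localization using the lower bound $V_\eps \geq C\eps$ on the annulus between $\fixedBall$ and $\largeBall$ to push the ``exterior'' spectrum above $C\eps$, the known fixed-ball result (theorem \ref{thrm=interiorSpectrumFixedBall}) together with a rank-$(N{+}1)$ counting argument (your min-max formulation is equivalent to the paper's finite-rank perturbation lemma), and Agmon-decay quasimodes built from the eigenfunctions of $\fixedOp$ to transfer the asymptotics $\lambda_i \logSim e^{-d_i/\eps}$. The only caveat is your ``forward'' direction, where you apply theorem \ref{thm=decayOfEigenfunctions} to eigenfunctions of $\intOp$ on the \emph{growing} ball even though its proof uses a bound on $\sup_\Omega \agmon$ uniform in $\eps$; this step is redundant given your counting argument, and the paper avoids it here by using only the direction from $\fixedOp$ to $\intOp$.
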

We proceed in two steps: 
\begin{itemize}
  \item we begin by showing the following decomposition: 
    \[ \sigma(\intOp) = S_1 \cup S,\]
    where $S_1$ is a set of at most $N+1$ eigenvalues,  and $S\subset [C\eps,\infty)$.
  \item then we study $S_1$ more precisely, and prove the theorem. 
\end{itemize}

\subsection{A rough division of the spectrum}
This step is mainly a rewriting of known arguments (for the case where the ball does not depend on $\eps$), where we keep track of the dependance on the outside. In particular, we draw heavily on the presentation of \cite{CFKS87}, chapter 11.1 (note that our $\eps$ is their $1/\lambda$, we take $h= \abs{\nabla F}^2$ and $g = \Delta F$, and multiply the whole operator by $\eps^2 = \lambda^{-2}$). 

The main idea is to compare $\intOp$ (in the growing ball) with the operator $\fixedOp$ in a fixed ball $\ball{R}$, which contains all minima of $V$. We first choose an $R'$ such that:
\begin{equation}
    \label{eq=defRPrime}
\inf\left\{ \agmon(x,\minima) , x\in \ball{R'}^c \right\}  =  d'_0 > d_1,
\end{equation}
where $d_1$ is the highest barrier of potential; and $V_\eps\geq C\eps$ when $R'\leq \abs{x}\leq \rZero$.
Then we take $R>R'$ (\eg{} $R = R' + 1$).
We let $d'_i = d_i$ for $1\leq i \leq N$: the $d'_i$ will give the rates of decrease of the exponentially small eigenvalues of $\fixedOp$.

  Following \cite{CFKS87}, we introduce a partition of unity: 
  \begin{equation}
  \label{eq=partitionJ}
    1 = J_0^2 + J_1^2,
  \end{equation}
where $J_0$ is localized outside the fixed ball,  and $J_1$  inside (see figure \ref{fig=partition}).

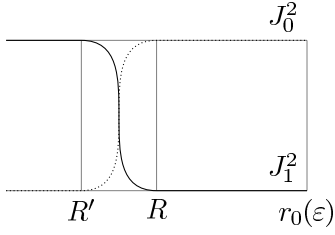
\begin{figure}
  \beginpgfgraphicnamed{partition_of_unity}%
\begin{tikzpicture} 
  \draw[help lines] (-1,0) -- +(4,0) 
                    (-1,2) -- +(4,0)
		    (0,0)  -- +(0,2)
		    (1,0)  -- +(0,2)
		    (3,0)  -- +(0,2);
  \draw[densely dotted] (-1,0) -- (0,0)..controls (1,0) and (0,2) ..(1,2)--(3,2);
  \draw (-1,2)-- (0,2).. controls (1,2) and (0,0) ..(1,0)--(3,0);
  \node[below] at (0,0) {$R'$};
  \node[below] at (1,0) {$R$};
  \node[below] at (3,0) {$\rZero$};
  \node[above left] at (3,2) {$J_0^2$};
  \node[above left] at (3,0) {$J_1^2$};
\end{tikzpicture} 
\endpgfgraphicnamed
\caption{The partition of unity}
\label{fig=partition}
\end{figure}

By the IMS localization formula, 
we have
\begin{equation}
  \label{eq=IMS}
  \intOp = J_0\intOp J_0 + J_1 \intOp J_1  - \eps^2 \sum_{i=1,2} (\nabla J_i)^2.
\end{equation}
Now, the choice of the radius $\rZero$ of the growing ball (\cf{} definition \ref{dfntn=rEps}) ensures that for some $C$, $V_\eps \geq 2C\eps$ on $\supp J_0$. Since $- \Delta$ is positive, we have in terms of quadratic forms:
\begin{equation}
  \label{eq=lowerBoundJ0}
  J_0\left(  -\eps^2 \Delta  + \frac{1}{2} \abs{\nabla F}^2 - \eps \Delta F  \right) J_0 \geq C\eps J_0^2.  
\end{equation}
In words, the operator localized between the fixed ball and the growing ball has a spectrum bounded below by $C\eps$. 

Since the operators are local, we have for any $\phi\in L^2(\largeBall)$: $J_1 \intOp J_1\phi = J_1 \fixedOp J_1\phi$.  Fortunately, the low-lying spectrum of $\fixedOp$ is well-known.
\begin{thrm}
    \label{thrm=interiorSpectrumFixedBall}
  The spectrum of $\fixedOp$ is given by:
  \[ \sigma(\fixedOp) = \{ \mu_0, \ldots, \mu_N \} \cup S,\]
  where 
  \[
  \forall 1\leq i\leq N, \quad 
  \mu_i \logSim \exp\left( - \frac{ d'_i}{\eps} \right) ,
  \]
  $\mu_0(\eps) = \bigO (\eps^\infty)$,
  and $S$ is included in $[C\eps^{3/2}, \infty)$ for some constant $C$.
\end{thrm}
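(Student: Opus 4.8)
The plan is to reduce the statement to the semiclassical analysis of low-lying eigenvalues for Witten-type (or metastable) Laplacians, which is precisely the content of the references \cite{BEGK04,BGK05,HKN04,HN06} cited in the introduction; the only thing to check is that the present normalization (a fixed ball $\fixedBall$ with Dirichlet condition, potential $V_\eps = \tfrac12\abs{\grad F}^2 - \tfrac\eps2\Delta F$, and semiclassical parameter $\eps$ in place of the $h^2$ there) fits into the framework. First I would recall that $\fixedOp = -\eps^2\Delta + V_\eps$ is unitarily equivalent, via the ground-state substitution $u \mapsto e^{F/(2\eps)}u$, to $\eps$ times the Witten Laplacian $\eps\,\Delta_{F/2}^{(0)}$ acting on functions on $\fixedBall$ with the appropriate (Dirichlet-type) boundary condition; this is the classical correspondence alluded to around \eqref{eq=defFullOperator}. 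Under the non-degeneracy hypotheses on $F$ stated in Section \ref{sec=mainResult} (finitely many local minima $x_0,\dots,x_N$, unique global minimum, distinct critical depths $d_1 > \cdots > d_N$, and the implicit Morse/generic saddle assumptions inherited from \cite{HKN04}), the spectrum of the Witten Laplacian on the $0$-forms splits into exactly $N+1$ exponentially small eigenvalues $\mu_0,\dots,\mu_N$ and a remainder bounded below by a constant times $\eps$ (this is where the $[C\eps^{3/2},\infty)$ in the original operator comes from, after multiplying by $\eps$).

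The key steps, in order, would be: (i) set up the unitary conjugation to the Witten Laplacian and identify the boundary condition (the Dirichlet condition on $\partial\fixedBall$ becomes a mixed/Dirichlet condition for the deformed operator, which is harmless since $\fixedBall$ contains all minima of $V$ in its interior, so no minimum sits on the boundary); (ii) quote the Morse-theoretic counting: the number of exponentially small eigenvalues equals the number of local minima $N+1$, and the lowest one, attached to the global minimum $x_0$, is $\bigO(\eps^\infty)$ — indeed it would be exactly $0$ for the free Witten Laplacian, but the Dirichlet condition on the boundary of the (large, minimum-free) ball only perturbs it by an amount smaller than any power of $\eps$ (this is the origin of the distinction between $d_0 = \infty$ and the $d'_0$ introduced for the growing-ball operator; in the fixed ball one genuinely gets $\eps^\infty$); (iii) quote the exponential-rate asymptotics $\mu_i \logSim \exp(-d'_i/\eps)$ with $d'_i = \mathcal{C}(x_i,\{x_0,\dots,x_{i-1}\}) = d_i$ for $1\le i\le N$, which is the Arrhenius law made precise in \cite{HKN04,BEGK04}; (iv) track the overall factor $\eps$ from the conjugation $\fixedOp = \eps\,\Delta_{F/2}$ to see that the ``rest of the spectrum'' lands in $[C\eps\cdot\eps^{1/2},\infty) = [C\eps^{3/2},\infty)$ — the $\eps^{1/2}$ being the spectral gap above the small eigenvalues for the Witten Laplacian, of order $\sqrt\eps$ by harmonic-oscillator comparison at the wells.

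The main obstacle I expect is not any single estimate but the bookkeeping of hypotheses and conventions: one must verify that the generic assumptions used in \cite{HKN04} (Morse function, distinct saddle values, saddle points of index one, no critical points on $\partial\fixedBall$) are either implied by the hypotheses stated here or can be added without loss in the ``reference case'' $F = \abs{x}^a$, and one must be careful that the $\eps$-dependence of $V_\eps$ (through the $-\tfrac\eps2\Delta F$ term) does not affect the leading asymptotics — it contributes only a bounded perturbation supported away from where the exponential smallness is decided, exactly as in the Agmon argument of Theorem \ref{thm=decayOfEigenfunctions}. Once this dictionary is in place, the theorem is a direct citation; accordingly I would present the proof as a reduction-plus-citation rather than reproving the Witten-complex machinery, and I would only spell out in detail the two points genuinely special to our setup, namely the $\bigO(\eps^\infty)$ bound on $\mu_0$ coming from the Dirichlet condition on a minimum-free sphere, and the shift of the spectral threshold from $C\eps$ to $C\eps^{3/2}$ due to the overall $\eps$ factor.
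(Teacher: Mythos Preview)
Your approach is exactly what the paper does: it states the theorem and immediately cites \cite{HN06} (``in this special form it can be found in \cite{HN06}''), with the remark that the exponent $3/2$ is not optimal and could be replaced by anything $o(\eps)$. Your reduction to the Witten Laplacian and invocation of \cite{HKN04,HN06,BEGK04,BGK05} is therefore the right move, and your discussion of points (i)--(iii) is accurate and more detailed than what the paper provides.

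One correction to your step (iv): the bookkeeping for the $\eps^{3/2}$ threshold is off. The operator $\fixedOp = -\eps^2\Delta + V_\eps$ \emph{is} already (up to a harmless constant in front of $\abs{\nabla F}^2$) a semiclassical Witten Laplacian with parameter $h=\eps$; it is not $\eps$ times one. The harmonic-oscillator approximation at a non-degenerate minimum of $V = \tfrac12\abs{\nabla F}^2$ gives eigenvalues spaced by $c\eps$, so the gap above the cluster of $N+1$ exponentially small eigenvalues is of order $\eps$, not $\eps^{3/2}$. Equivalently, on the diffusion side $L_\eps$ has a gap of order $1$ (Ornstein--Uhlenbeck), and the conjugation $\fixedOp \sim \eps L_\eps$ turns this into a gap of order $\eps$. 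The paper acknowledges this explicitly: ``The $3/2$ exponent is not optimal (the statement holds if it is replaced by any quantity which is $o(\eps)$).'' So your factorisation $\eps\cdot\eps^{1/2}$ is not the right accounting; the $3/2$ in the statement is simply a deliberately crude bound inherited from the cited reference, not the product of two separate scales. This does not affect the validity of your reduction, only the explanation you give for the exponent.
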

Results in this spirit date back at least to Freidlin and Wentzell's \cite{FW98}; in this special form it can be found in \cite{HN06}. The $3/2$ exponent is not optimal (the statement holds if it is replaced by any quantity which is $o(\eps)$). 

Let $E$ be the span of the $N+1$ first eigenvalues of $\fixedOp$, $P$ the orthogonal projection on $E$, and $K$ the restriction of $\fixedOp$ to $E$. Then 
\[
J_1 \fixedOp J_1  -  J_1 K J_1  \geq C\eps^{3/2} J_1^2.
\]
Let $\tilde{K} =  J_1 K J_1$. If we now plug  \eqref{eq=lowerBoundJ0} and the last equation into  \eqref{eq=IMS}, we get: 
\[
\intOp  - \tilde{K} \geq  C\eps^{3/2}   - C' \eps^2 \geq C''\eps^{3/2},
\]
where $C$ and $C'$ are constants, and $\rank(\tilde{K}) \leq N+1$. 

This is enough to conclude the first step.  Indeed, $\intOp - \tilde{K}$
has no spectrum in the interval $(0,C''\eps^{3/2})$. It is known that a
perturbation by an operator of finite rank can only create as many
eigenvalues as its rank in such an interval (\cf{} \eg{} \cite{Beh78}). Therefore, $\intOp$ has at most $N+1$ eigenvalues in $(0,C\eps^{3/2})$.

\subsection{Approximation of the low-lying eigenvalues}
We precise the approximation of the previous paragraph and show that the first $N+1$ eigenvalues of $\intOp$ are in fact near the ones of $\fixedOp$.

Once more, the intuition is simple: the $N$ eigenvectors of $\fixedOp$ will be shown to be quasimodes (\ie{} approximate eigenvalues and eigenvectors) of $\intOp$. Therefore, a classical result in spectral theory will tell us that near each eigenvalue of $\fixedOp$, there is one for $\intOp$ and this will prove theorem \ref{thm=spectrumOfTheInteriorOperator}.

Let us now be more precise. By theorem \ref{thrm=interiorSpectrumFixedBall}, we know that the $N+1$ exponentially small eigenvalues of $\fixedOp$ are such that: 
\begin{align}
  \label{eq=eigenvaluesOfTheFixedOp}
  \forall 1\leq i \leq N, \quad
  \mu_i(\eps) &\logSim \exp\left( - \frac{d'_i}{\eps} \right)
\end{align}
for any $\delta$.
Let $\phi_i$ be the corresponding normalized eigenfunctions. We would like to consider them as approximate eigenfunctions for $\intOp$. However, $\phi_i$ has no reason to be in the domain of $\intOp$  (because $\Delta\phi_i$, seen as a distribution on $\largeBall$, will have a singular part on  $\partial\fixedBall$). Our approximate eigenfunction will therefore be $\psi_i  = \chi \phi_i$, where $\chi$ is a cutoff function (we may take $\chi = J_1$, where $J_1$ was defined above \eqref{eq=partitionJ})
We will show
\begin{equation}
  \label{eq=psiAreQuasimodes}
  \intOp\tilde{\psi}_i =  \lambda_i\tilde{\psi}_i + \bigO\left( \exp \left(
     - \frac{d'_0 - \delta}{\eps} \right) \right),
\end{equation}
where $\tilde{\psi_i}$ is a normalized version of $\psi_i$.
Once this is shown, the proof is complete: indeed, this implies 
\[
\sigma(\intOp) \cap \left[
  \lambda_i - C_\delta \exp\left( - \frac{d'_0 - \delta}{\eps} \right), 
  \lambda_i + C_\delta \exp\left( - \frac{d'_0 - \delta}{\eps} \right)
		    \right] \neq 0
\]
(\cite{Hel95}, prop. 5.1.4). The asymptotics of $\lambda_i$ ensure that these intervals are disjoint (for $\eps$ small enough), and the error is negligible with respect to the main term $e^{-\lambda_i/\eps}$. Since we already know that, below $C\eps$, the spectrum of $\intOp$ is discrete and contains at most $N+1$ points, it follows that there  each of these $N+1$ eigenvalues must be located in one of these intervals. Thanks to \eqref{eq=eigenvaluesOfTheFixedOp}, this concludes the proof of theorem \ref{thm=spectrumOfTheInteriorOperator}.

We now establish \eqref{eq=psiAreQuasimodes}. We first show the bound for $\psi_i$.
\begin{align*}
  \intOp\psi_i - \lambda_i \psi_i 
  &= \intOp \chi \phi_i - \lambda_i \chi \phi_i \\
  &= \chi \intOp \phi_i - \lambda_i\chi\phi_i + [\intOp,\chi]\phi_i .
\end{align*}
On the support of $\chi$, $\intOp\phi_i$ is well defined and equals $\lambda_i\phi_i$. Therefore
\begin{align*}
\intOp\psi_i - \lambda_i\psi_i 
&= [ \intOp,\chi] \phi_i \\
&= - \eps^2 [\Delta, \chi] \phi_i \\
&= - 2\eps^2 \nabla \chi \nabla \phi_i - \eps^2(\Delta \chi) \phi_i.
\end{align*}
Taking norms, we get
\[
\lTwoNrm{\intOp\psi_i - \lambda_i\psi_i}^2 \leq 4\eps^4\lTwoNrm{\nabla\chi \nabla\phi_i}^2 + 2 \eps^4\lTwoNrm{(\Delta \chi) \phi_i}^2.
\]
We now use the fact that $\phi_i$ is small when we are far from the critical points of $V$, therefore on the support of $\nabla \chi$ and $\Delta \chi$. More precisely, 
\begin{align*}
  \lTwoNrm{\phi_i (\Delta \chi)}
  &\leq \exp\left( -  \inf_{\supp \chi} (d(x))/\eps)\right)
    \lTwoNrm{\phi_i e^{d(x)/\eps}\Delta\chi}^2 \\
  &\leq C \exp\left(-\frac{ \inf_{\supp \chi}d(x) -\delta}{\eps}\right),
\end{align*}
where the second bound follows from  the decay estimate for the fixed operator (equation \eqref{eq=decayOfEigenfunctions-fixedOp}). By a similar argument (using the other part of \eqref{eq=decayOfEigenfunctions-fixedOp} to bound  $\nabla \phi$), we get:
\[
\lTwoNrm{\intOp\psi_i - \lambda_i\psi_i}^2
\leq C_\delta \exp\left(-\frac{ \inf_{\supp \chi}d(x) -\delta}{\eps}\right).
\]
The definition of $R'$ and $d'_0$ (equation \eqref{eq=defRPrime}) implies:
\[
\lTwoNrm{\intOp\psi_i - \lambda_i\psi_i}^2
\leq C_\delta \exp\left(-\frac{ d'_0 -\delta}{\eps}\right),
\]
and the desired bound is proved, for the non-normalized functions $\psi_i$. However, since $\phi_i$ is normalized and localized inside the fixed ball, similar arguments show that $\norm{\psi_i} \geq 1/2$ for small $\eps$. This concludes the proof of \eqref{eq=psiAreQuasimodes}, and theorem \ref{thm=spectrumOfTheInteriorOperator} is proved.


\section{Bounds on the exterior resolvent}
\label{sec=exteriorResolvent}
\subsection{The general strategy}
\label{sec=strategy}
We prove here that the exterior part of the dilated Dirichlet resolvent is regular in the neighborhood of the small eigenvalues. We are interested in a bound on $\extDResolv(\theta, z)$ when $z$ is on a contour around one of the eigenvalues $\lambda_j$. Since $\lambda_j$ is exponentially small, the contour is in a small neighbourhood of $0$, and since $0$ is in the essential spectrum of $H^D_e(\theta)$, the best bound we can hope for is of the type: 
\[
  \norm{ \extDResolv(\theta,z)} \leq \frac{\text{const}}{\lambda_j}.
\]
The following result will be enough for our purpose.
\begin{thrm}
  \label{thrm=boundOnTheExteriorResolvent}
  Let $\lambda_j$ be the exponentially small eigenvalues of the interior operator (\cf{} Theorem \ref{thm=spectrumOfTheInteriorOperator}). Let $\eta>0$. There exists $\theta = i\beta,c_z,C$, independent of $\eps$, such that, if 
  $\abs{z-\lambda_j}\leq c_z\lambda_j$,
  \[
  \norm{ \extDResolv(\theta,z) } \leq \frac{ C_\eta }{ \lambda_j^{1+\eta}},
  \]
\end{thrm}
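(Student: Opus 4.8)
The plan is to estimate $\extDResolv(\theta,z) = (\extRotOp - z)^{-1}$ by showing that, after the complex dilation $\theta = i\beta$ with $\beta$ small but fixed, the numerical range of $\extRotOp$ stays away from a small cone around the positive real axis near the origin, except for a region whose size is controlled by $V_\eps$ on the Dirichlet sphere, i.e.\ by $\eps$ (recall $\rZero$ was chosen precisely so that $V_\eps \geq \eps$ on $\partial\largeBall$). Concretely, I would compute $\xRe e^{i\alpha}\langle u, (\extRotOp - z) u\rangle$ and $\xIm \langle u, \extRotOp u\rangle$ for $u$ in the form domain, using the explicit expression \eqref{eq=expression_of_the_exterior_scaling}: the dilated kinetic term $-\eps^2 e^{-2\theta}D^2$ contributes a factor $e^{-2i\beta}$ to the (positive) radial energy, the centrifugal term $\eps^2\Lambda/r_\theta^2$ is controlled because $\Lambda \geq 0$ and $r_\theta = r + (r-r_0)e^{i\beta}$ has $\abs{r_\theta}\gtrsim r$ with bounded argument, and the potential term is $V_\eps(r_\theta,\omega)$, whose deviation from $V_\eps(r,\omega)$ is the crux.

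The key steps, in order: (1) use Hypothesis \ref{hpthss=analyticity} and \ref{hpthss=decay} — together with the Taylor-type bounds referenced as proposition \ref{prpstn=TaylorBoundsOnRTheta} — to show $\abs{V_\eps(r_\theta,\omega) - V_\eps(r,\omega)} \leq C\beta\, \abs{V_\eps'}\cdot\abs{r-r_0} \lesssim \beta\, V_\eps(r,\omega)$ on the exterior region, using the lower bound $\abs{V_\eps'(r)}\geq c_V V_\eps(r)$ and the decay $V_\eps \sim \abs{x}^{-\gamma}$ so that $\abs{r-r_0}\abs{V_\eps'(r)} \lesssim V_\eps(r)$; (2) deduce that $\xIm\langle u,\extRotOp u\rangle = -\sin(2\beta)\,\eps^2\norm{Du}^2 + \bigO(\beta)\langle u, V_\eps(r,\cdot) u\rangle + (\text{small centrifugal})$, so the imaginary part is essentially negative and comparable in size to the real kinetic + potential energy; (3) combine into a \emph{sectorial} estimate: there is a half-line $e^{i\psi}\xR_+$ (with $\psi\in(-2\beta,0)$) and a cone of half-angle $\sim\beta$ around it such that the numerical range of $\extRotOp$, restricted to energies above $\sim c\eps$, avoids it — while the low-energy part is handled because $V_\eps\geq\eps$ forces $\langle u,\extRotOp u\rangle$ to have real part $\gtrsim \eps$ when the kinetic energy is small; (4) for $z$ with $\abs{z-\lambda_j}\leq c_z\lambda_j$ (hence $\abs{z}\lesssim \lambda_j \ll \eps$), show $z$ lies outside this numerical range with $\dist(z, W(\extRotOp))\gtrsim \lambda_j$ times a geometric factor coming from the cone opening angle, giving $\norm{\extDResolv(\theta,z)}\leq 1/\dist(z,W(\extRotOp)) \lesssim C/\lambda_j$; (5) recover the stated $\lambda_j^{-1-\eta}$ (weaker than $\lambda_j^{-1}$) to absorb the loss from the fact that near the bottom of the essential spectrum the sector degenerates — the angle between the rotated essential spectrum and the real axis is $\sim\beta$ but the relevant contour point $z\approx\lambda_j$ sits at distance $\sim\lambda_j$ along the real axis, and keeping $\beta$ fixed while $\lambda_j\to 0$ means the transverse distance is only $\sim\beta\lambda_j$; the extra $\lambda_j^{-\eta}$ buys room for a more careful bookkeeping (or, alternatively, for letting $\beta$ depend very weakly on $\eps$).

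The main obstacle is step (1) combined with step (3): controlling $V_\eps(r_\theta,\omega)$ uniformly in $\eps$ on the \emph{growing} exterior region $\{\abs{x}\geq\rZero\}$ where $\rZero\to\infty$, so that the perturbation $V_\eps(r_\theta,\omega)-V_\eps(r,\omega)$ genuinely stays a small multiple of $V_\eps(r,\omega)$ rather than of something larger. This is exactly where Hypothesis \ref{hpthss=decay}'s triple assumption (two-sided power bound, bound on the analytic continuation, and $\abs{V_\eps'}\geq c_V V_\eps$) is used in an essential way; without the lower bound on $\abs{V_\eps'}$ one cannot convert the displacement estimate into a \emph{relative} bound, and without the analyticity cone of Hypothesis \ref{hpthss=analyticity} the dilation $r\mapsto r_\theta$ is not even defined on the relevant region. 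A secondary, more technical difficulty is treating the centrifugal term $\eps^2\Lambda/r_\theta^2$: since $\Lambda$ is unbounded, one must carry it as a positive form and check that its imaginary contribution $\eps^2\xIm(r_\theta^{-2})\langle u,\Lambda u\rangle$ has a sign compatible with the sector (it does, for $\beta>0$ small, since $\arg(r_\theta^{-2})\in(-2\beta,0)$) and does not spoil the lower bound on the real part. I expect the dimension-dependent constant $(n-1)(n-3)$ in $\Lambda$ to be harmless as it only shifts things by $\bigO(\eps^2/r_0^2) = \bigO(\eps^2 (\eps/c_V)^{2/\gamma})$, negligible against $\eps$.
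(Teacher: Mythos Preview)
Your approach is genuinely different from the paper's, and the error in step (3) is not cosmetic. You write that ``the low-energy part is handled because $V_\eps \geq \eps$ forces $\langle u, \extRotOp u\rangle$ to have real part $\gtrsim \eps$ when the kinetic energy is small''. But $V_\eps \geq \eps$ holds only on the sphere $\partial\largeBall$; on the exterior domain $V_\eps(x)\sim \abs{x}^{-\gamma}\to 0$. A state $u$ localised where $V_\eps\approx \lambda_j$ (hence at radius $\gg \rZero$) has both potential and kinetic quadratic forms of order $\lambda_j$, not $\eps$, so your dichotomy ``either high energy, avoided by the sector; or low energy, real part $\gtrsim \eps$'' collapses. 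The numerical range of $\extRotOp$ genuinely comes within $O(\beta\lambda_j)$ of the point $\lambda_j$.

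A numerical-range argument \emph{can} be salvaged, but the mechanism is not a lower bound on $V_\eps$: it is the non-trapping condition in Hypothesis~\ref{hpthss=decay}. One has $\xIm V_\eps(r_\theta,\omega)\approx \beta(r-r_0)V_\eps'(r,\omega)<0$ pointwise, and for $r\gg r_0$ the power-law forces $(r-r_0)\abs{V_\eps'}\sim V_\eps$, so $\abs{\xIm V_\eps(r_\theta)}\gtrsim \beta V_\eps$. The delicate region is near $r_0$, where $(r-r_0)\abs{V'}$ is small; but there $V_\eps\sim\eps$, so the \emph{real} part is already $\gtrsim\eps\gg\lambda_j$. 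Making this into a clean lower bound $\abs{\langle u,(\extRotOp-z)u\rangle}\geq c\beta\lambda_j$ for \emph{all} normalised $u$ (in particular for $u$ with mass spread over both regions) is exactly the work you have not done, and it requires the second-order expansion of Proposition~\ref{prpstn=TaylorBoundsOnRTheta} together with the sign analysis of $(r-r_0)V'(r)+2(V(r)-\lambda_j)$ that the paper carries out at the symbol level.

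For comparison: the paper does \emph{not} attempt a global numerical-range bound on $\extRotOp$. It splits the exterior with cutoffs $\chi_0,\chi_1$ and treats the two pieces by completely different methods. Near the Dirichlet sphere it defines $H_0=\extRotOp+\eps\chiTldZero$ with an artificial potential floor (so that $V_\eps+\eps\chiTldZero\gtrsim\eps^2$ everywhere), and \emph{there} the numerical-range argument you sketch gives $\norm{(H_0-z)^{-1}}\leq C/\eps^2$. Far out, where $V_\eps$ is small, it builds a smooth-symbol operator $H_1$ and proves the pointwise symbol bound $\abs{h_1(x,\xi)-z}\geq c\max(V_\eps,\eps^2\abs{\xi}^2,\lambda_j)$ (this is where the non-trapping estimate enters), then converts to an operator bound via a Cald\'eron--Vaillancourt theorem with local $L^2$ symbol control; the loss $\lambda_j^{-\eta}$ comes precisely from the fractional Sobolev exponent $s>d/2$ in that theorem. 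Finally the two approximate resolvents are glued and the error is killed by Agmon estimates between the cutoff supports. Your route, if the gap above is filled, would be more elementary and would presumably yield $C/\lambda_j$ without the $\eta$; the paper's route is heavier but isolates the two difficulties (Dirichlet boundary vs.\ decaying potential) cleanly.
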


The main problem to show such a bound is the behaviour at infinity. We
investigate it by using techniques of pseudo-differential operators.
However, these techniques are mainly known when the symbol of the
operator depends smoothly on the parameters (which is not the case here,
since we put a Dirichlet boundary condition on a sphere). Therefore, we
will work separately on the two ``boundaries'' of our domain. Let
$\chi_0, \chi_1$ be a partition of unity, where $\chi_0$ is $1$ on the
ball $\largeBall$ and $\chi_1 = 1$ at infinity (the cut-off functions
will be defined later, \cf{} fig \ref{fig=indicators}). We will define two auxiliary operators $H_0$ and  $H_1$:
\begin{itemize}
    \item The operator ``at infinity'',  $H_1$, will be defined by pseudo-differential operator theory (cf{} section \ref{sec=PDO}), 
  \item We define $H_0$ with a  Dirichlet condition on the sphere, but without degeneracy at infinity, and bound its resolvent (section \ref{sec=theDirichletPart}).
\end{itemize}
Once these two steps are done, we construct  an approximate resolvent by gluing $R_0$ and $R_1$, considering $R = \chi_0 R_0 + \chi_1 R_1$. We finally deduce a bound on the true resolvent (section \ref{sec=putTogether}).

We begin by preliminary estimates on $V_\eps$.


\subsection{Some estimates on \texorpdfstring{$F$ and $V$}{F and V}}
  Let us gather some consequences of the hypotheses on $F$. Recall that $F$ is
  analytic in a region of $\xC^d$ defined by equation
  \eqref{eq=defAnalyticityCone}. Consider the following subset of $\xC$: 
  \[
  \mathcal{R} = \left\{ r , \abs{r}\geq r_0,\arg(r) \leq \tan(2\beta_0).\right\}.
  \]
  The following subset of $\xR^d$ is contained in the analyticity region for
  $F$: 
  \[ 
  \left\{ r\omega = (\omega_1 r, \omega_2 r, \ldots \omega_n r), r\in \xC,
  \omega \in \mathcal{S}_{\xR}^ {n-1}, r\in\mathcal{R}.\right\}.
  \] 
  Therefore, 
  for each $\omega$, $\tilde{V}_{\omega,\eps}: r\mapsto V_\eps(r\omega)$ is 
  analytic in  $\mathcal{R}$.
  The exterior scaled potential $V_\theta(x)$, for $x = r\omega$, coincides with $\tilde{V}_\omega(r_\theta)$, where $r_\theta = r_0 + (r-r_0)e^ {\theta}$.
  We will only consider imaginary $\theta$, and let $\theta = i\beta$.
  
  \begin{figure}
    \label{fig=analyticity}
  \beginpgfgraphicnamed{analyticity}%
    \def\Beta{13}          
    \def\angleRadius{15pt} 
  \begin{tikzpicture}
      [angle/.style={fill=black!10},
      rotate=-3*\Beta]
      \draw[help lines,->] (0,0) -- (3*\Beta:8cm);
      \coordinate[label=above:$R_0$] (R zero)    at (0,0);
      \coordinate[label=above:$r_0$] (r zero)    at (3*\Beta:2cm);
      \fill[pattern=horizontal lines light gray] 
        (r zero) -- +(\Beta:4cm) arc (\Beta:3*\Beta:4cm) -- cycle;
      \coordinate[label=above:$A$]   (A) at ($ (r zero) + (\Beta:2cm) $);
      \path (A) |- (r zero) node[coordinate,midway,label=below:$B$] (B) {};
      \path (A) |- (R zero) node[coordinate,midway,label=below:$C$] (C) {};
      \filldraw [angle]
        (0,0)    --   (\angleRadius,0)     arc (0:3*\Beta:\angleRadius)   -- cycle;
      \node[right] at (\Beta:\angleRadius) {$3\beta_0$};
      \filldraw [angle]
        (r zero) -- + (\angleRadius,0)     arc (0:3*\Beta:\angleRadius)   -- cycle;
      \filldraw [angle,fill=black!70]
        (r zero) -- + (0.7*\angleRadius,0) arc (0:\Beta:0.7*\angleRadius) -- cycle;
      \draw (0,0)--(5,0);
      \draw (r zero) -- +(4,0);
      \draw (r zero) -- +(\Beta:4);
      \draw (A) -- (C);
  \end{tikzpicture} 
  \endpgfgraphicnamed
  \caption{The analyticity region of the map $\tilde{V}_\omega$ and the
  relevant  Cauchy contours.}
  The function $V$ is analytic in the whole sector of angle $3\beta_0$ (light grey angle). Therefore, the distance between a generic point in the colored sector (where the $r_\theta$ live, for $\theta\leq 2\beta_0$) and the non analyticity region is at least $AC = AB + BC$. Since $BC = (r_0 - R_0) \tan(3\beta_0)$, and $AB = (r-r_0) \sin(\beta_0)$, $AC \geq (r-R_0) \sin(\beta_0)$. So the circle centered in $r_\theta$ with radius $(1/2)(r-R_0)\sin(\beta_0)$ is entirely contained in the analyticity region. 
\end{figure}

  \begin{prpstn}
      \label{prpstn=TaylorBoundsOnRTheta}
  The following development holds, for small $\beta = \xIm(\theta)$:
  \begin{equation}
    \label{eq=firstOrderBnd}
    \forall r\geq r_0(\eps), \quad
    V(x_\theta) =  V(r_\theta ,\omega) = V(r,\omega)(1 + \bigO(\beta)),
  \end{equation}
  where the $\bigO(\beta)$ takes complex values,  but  \emph{does not depend on $\eps,r,\omega$}.
  Moreover, on the region $V(r) \leq 2 \lambda_j$, 
  \begin{equation}
    \label{eq=secondOrderBnd}
    V(x_\theta) = V(r,\omega) + i \beta (r- r_0)\devPart{}{r}V(r,\omega)(1 + \bigO(\beta)),
  \end{equation}
  \end{prpstn}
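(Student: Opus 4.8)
The plan is to use the Cauchy integral formula on the circles described in the caption of Figure \ref{fig=analyticity}, combined with the decay hypothesis \ref{hpthss=decay}, to control $\tildV(r_\theta) - \tildV(r)$ and its first-order correction. First I would fix $\omega$ and work with the one-variable analytic function $g(\zeta) = \tildV(\zeta) = V_\eps(\zeta\omega)$ on the sector $\mathcal{R}$. As noted in the figure, for $r\geq r_0$ and $\beta = \xIm(\theta) \leq 2\beta_0$, the point $r_\theta = r_0 + (r-r_0)e^{i\beta}$ lies at distance at least $\rho := \tfrac12 (r-R_0)\sin(\beta_0)$ from the complement of the analyticity region; also $r_\theta$ stays within distance $|r_\theta - r| = |r-r_0|\,|e^{i\beta}-1| \leq \beta(r-r_0)$ of $r$. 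So on the segment $[r, r_\theta]$ (which lies in the analyticity sector once $\beta$ is small) I can write the first-order Taylor formula with integral remainder,
\[
  g(r_\theta) = g(r) + (r_\theta - r)\int_0^1 g'\bigl(r + t(r_\theta-r)\bigr)\, dt,
\]
and $r_\theta - r = (r-r_0)(e^{i\beta}-1) = i\beta(r-r_0)(1 + \bigO(\beta))$.

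The key estimate is then on $g'$ along that segment. For any point $\zeta$ on $[r,r_\theta]$, the disc of radius $\rho_\zeta \sim (|\zeta| - R_0)\sin\beta_0 \sim |\zeta|\sin\beta_0$ (for $|\zeta|$ large, since $R_0$ is fixed) around $\zeta$ lies in the analyticity region, and there $|g| \leq C_V |\zeta|^{-\gamma}(1+\bigO(\beta))$ by the bound on the analytic continuation in hypothesis \ref{hpthss=decay}. Cauchy's estimate on that disc gives $|g'(\zeta)| \lesssim C_V |\zeta|^{-\gamma}/\rho_\zeta \lesssim |\zeta|^{-\gamma-1}$. Since along $[r,r_\theta]$ one has $|\zeta| \sim r$ (uniformly, for $\beta$ small — this uses $r \geq r_0 \to \infty$, so $r - r_0$ and $r$ are comparable up to the additive constant $R_0$), we get $\bigl|g'(\zeta)\bigr| \lesssim r^{-\gamma-1}$, hence
\[
  \bigl| g(r_\theta) - g(r)\bigr| \;\lesssim\; \beta (r-r_0)\cdot r^{-\gamma-1} \;\lesssim\; \beta\, r^{-\gamma} \;\lesssim\; \beta\, V(r,\omega),
\]
where the last step uses the lower bound $V_\eps(r) \geq c_V |x|^{-\gamma}$. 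This is exactly \eqref{eq=firstOrderBnd}, with the implied constant depending only on $c_V, C_V, \beta_0, d$ and not on $\eps, r, \omega$.

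For \eqref{eq=secondOrderBnd} I would push the expansion one order further: write
\[
  g(r_\theta) = g(r) + (r_\theta - r) g'(r) + \tfrac12 (r_\theta - r)^2 \int_0^1 (1-t) g''\bigl(r+t(r_\theta-r)\bigr)\,dt,
\]
use $r_\theta - r = i\beta(r-r_0)(1+\bigO(\beta))$ for the linear term, and bound the remainder. A second Cauchy estimate on the same family of discs gives $|g''(\zeta)| \lesssim C_V |\zeta|^{-\gamma}/\rho_\zeta^2 \lesssim r^{-\gamma-2}$, so the remainder is $\lesssim \beta^2 (r-r_0)^2 r^{-\gamma-2} \lesssim \beta^2 r^{-\gamma} \lesssim \beta^2 V(r,\omega)$. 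To absorb this into the stated form $i\beta(r-r_0)\,\partial_r V(r,\omega)(1+\bigO(\beta))$, I need the linear term itself to dominate the remainder on the region $V(r) \leq 2\lambda_j$: this is where the hypothesis $|V_\eps'(r)| \geq c_V V_\eps(r)$ is essential. It gives $|\partial_r V(r,\omega)| \cdot (r-r_0) \gtrsim c_V V(r,\omega)(r-r_0) \gtrsim c_V V(r,\omega)\cdot r$ (again using $r-r_0 \sim r$), so the linear term is $\gtrsim \beta\, r\, V(r,\omega)$, which indeed dominates the $\bigO(\beta^2 V(r,\omega))$ remainder by a factor $\sim \beta r \to \infty$; rewriting the remainder as $\partial_r V \cdot i\beta(r-r_0)\cdot \bigO(\beta)$ then yields \eqref{eq=secondOrderBnd}.

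The main obstacle is the uniformity in $\eps$: one must check carefully that every constant produced by the Cauchy estimates depends only on the fixed data ($c_V, C_V, \beta_0, \gamma, d, R_0$) and not on $\eps$, which enters only through the lower limit $r \geq r_0(\eps) = (c_V/\eps)^{1/\gamma}$. The point is that the radii $\rho_\zeta$ of the Cauchy discs are comparable to $|\zeta|\sin\beta_0$ with $\eps$-independent proportionality — the subtracted constant $R_0$ is harmless precisely because $r_0(\eps) \to \infty$, so $|\zeta| - R_0$ and $|\zeta|$ are comparable with an $\eps$-independent (in fact, $\to 1$) ratio throughout the relevant range. Making this "comparable" bookkeeping precise, and keeping track of the complex-valuedness of the various $\bigO(\beta)$ factors (they are genuinely complex, as the statement warns), is the only delicate part; the rest is the two Cauchy estimates above.
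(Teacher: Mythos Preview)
Your proposal is correct and follows essentially the same route as the paper: Cauchy estimates on discs inside the analyticity sector to bound $\tldV'$ and $\tldV''$ by $r^{-\gamma-1}$ and $r^{-\gamma-2}$, then a Taylor expansion of $\tldV(r_\theta)$ combined with the lower bounds on $V$ and $V'$ from hypothesis~\ref{hpthss=decay}. The only cosmetic difference is that the paper parametrizes the expansion along the arc $\alpha\mapsto r_{i\alpha}$ (writing $\tldV(r_\theta)=\tldV(r)+\int_0^\beta i(r-r_0)e^{i\alpha}\tldV'(r_{i\alpha})\,d\alpha$) and bounds $\tldV''$ iteratively (first $\tldV'$ on the $2\beta_0$-cone, then $\tldV''$ on the $\beta_0$-cone), whereas you work along the chord $[r,r_\theta]$ and get $g''$ in one shot from the standard Cauchy inequality; both choices are equivalent here.
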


  \begin{rmrk}
    \label{rmrk=aboutRAndRZero}
     Note that, given the growth rates of $V$, and the fact that $\rZero(\eps)$ is polynomial in $\eps$ and $\lambda_j(\eps)$ exponentially small, $\rZero$ is much smaller than $r$ if $V(r) = \lambda_j$. We will take $\eps$ small enough so that:
     \[V(r) \leq 2\lambda_j \quad\implies\quad r-\rZero \geq \frac{1}{2}r. \]
  \end{rmrk}

  \begin{proof}
    These bounds are given by the Taylor approximation of $V(r_\theta)$ for small $\theta$. The strong hypotheses on $V$ guarantee that, for small $\beta$ \emph{independent of $x$}, the first terms of the development are the main ones. To see it, we first prove 
\begin{lmm}
  \label{lmm=upperBoundsOnV}
  For $\alpha<\beta_0$, 
  $\abs{\tldV'(r_{i\alpha})} \leq Cr^{-\gamma -1}$, and $
  \abs{\tldV''(r_{i\alpha})} \leq Cr^{-\gamma -2}$, where the constants do not
  depend on $\omega, \eps$.
\end{lmm}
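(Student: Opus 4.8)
The strategy is to use Cauchy's integral formula on the analyticity region of $\tldV$ established in the discussion around figure \ref{fig=analyticity}. Recall that for $r$ with $\abs{r}\geq r_0$ and $\arg r$ small, and for $\alpha<\beta_0$, the point $r_{i\alpha} = r_0 + (r-r_0)e^{i\alpha}$ lies in the sector of angle $3\beta_0$ where $F$ — hence $V_\eps$ — is analytic, and moreover the disc $D$ centred at $r_{i\alpha}$ of radius $\rho := \tfrac12 (r-R_0)\sin\beta_0$ is entirely contained in that sector. Since $r_0\to\infty$ and $R_0$ is fixed, we have $r - R_0 \sim r$ for the relevant $r$, so $\rho$ is comparable to $r$ itself; this is the key geometric input.

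First I would record the pointwise bound on $\tldV$ itself on this disc. By Hypothesis \ref{hpthss=decay}, $\abs{V'(z)}\leq C_V\abs{z}^{-\gamma}$ on the analyticity cone (here $V'$ denotes the analytic continuation of $V_\eps$, as in the hypothesis). For any $w\in D$ we have $\abs{w}\geq \abs{r_{i\alpha}} - \rho$; since $\abs{r_{i\alpha}}$ is comparable to $r$ and $\rho\leq \tfrac12\abs{r_{i\alpha}}$ for $\eps$ small (because $R_0$ is negligible against $r_0\leq\abs{r}$), we get $\abs{w}\geq c\,r$ uniformly in $\omega,\eps$, hence $\abs{\tldV(w)}\leq C r^{-\gamma}$ on $D$ with a constant depending only on $C_V,\beta_0,d$.

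Then the two estimates follow from the Cauchy estimates for derivatives: for a function analytic on a disc of radius $\rho$ about $r_{i\alpha}$ and bounded there by $M$, one has $\abs{\tldV'(r_{i\alpha})}\leq M/\rho$ and $\abs{\tldV''(r_{i\alpha})}\leq 2M/\rho^2$. Plugging $M = Cr^{-\gamma}$ and $\rho \asymp r$ gives $\abs{\tldV'(r_{i\alpha})}\leq C r^{-\gamma-1}$ and $\abs{\tldV''(r_{i\alpha})}\leq C r^{-\gamma-2}$, with constants independent of $\omega$ and $\eps$, as claimed.

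**Main obstacle.** The only genuine subtlety is bookkeeping the geometry uniformly in $\eps$: one must make sure that the radius $\rho$ of the Cauchy disc is bounded below by a fixed multiple of $r$ (not merely positive) for \emph{all} $r\geq r_0(\eps)$ and all small $\eps$ simultaneously, and that every point of the disc stays far from the boundary ray where analyticity could fail. This is exactly the content of the estimate $AC\geq (r-R_0)\sin\beta_0$ from figure \ref{fig=analyticity}, together with the remark that $r_0(\eps)\to\infty$ dominates the fixed radius $R_0$; once that is in hand the rest is a routine application of Cauchy's inequalities.
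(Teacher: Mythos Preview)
Your proposal is correct and follows the same core idea as the paper: apply Cauchy's integral formula on a disc of radius $\rho=\tfrac12(r-R_0)\sin\beta_0$ centred at $r_{i\alpha}$, use that $\rho$ is comparable to $r$ (since $r\geq r_0(\eps)\gg R_0$) and that $\abs{\tldV}\aleq r^{-\gamma}$ on the disc, and read off the bounds on derivatives.

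There is one mild difference worth pointing out. The paper proceeds in two passes: it first establishes the bound $\abs{\tldV'(r_{i\alpha})}\leq Cr^{-\gamma-1}$ for all $\alpha<2\beta_0$ (so on a slightly \emph{larger} cone than the one in the statement), and then reruns the Cauchy argument with $\tldV'$ in place of $\tldV$ to obtain the bound on $\tldV''$ on the smaller cone $\alpha<\beta_0$. You instead apply the Cauchy inequality for the second derivative directly, getting $\abs{\tldV''(r_{i\alpha})}\leq 2M/\rho^2$ in one shot. Your route is shorter and perfectly adequate for the lemma as stated; the paper's bootstrap yields, as a by-product, the $\tldV'$ bound on the wider cone $\alpha<2\beta_0$, but this extra margin is not used elsewhere. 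Either way the geometric uniformity in $\eps$ that you flag as the ``main obstacle'' is exactly what the paper handles via the inequality $AC\geq (r-R_0)\sin\beta_0$ of figure~\ref{fig=analyticity} together with $r-R_0\geq r/2$ for small $\eps$.
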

This follows from the estimates on $V$ and analyticity. Indeed, $\tldV$ is
analytic in a conical region of angle $3\beta_0$. Therefore, for any
$r_\theta$, with $\theta = i\alpha$ and $\alpha<2\beta_0$, the circle centered
in $r_\theta$ with radius $(r-R_0)\sin(\beta_0)/2$ is contained in the cone (\cf{}
figure \ref{fig=analyticity}). Apply Cauchy's formula on this circle:
\[
\tldV'(r_\theta)
= \frac{1}{2i\pi} \int_{\text{circle}} \frac{\tldV(z)}{z-r}.
\] 
On this circle,
$\abs{z} \geq r/2$ so 
$\tldV \leq 2^ \gamma C_V r^{-\gamma}$, and $\abs{z-r}  =
(1/2)(r-R_0)\sin(\beta_0)$.
For $\eps$ small enough, since $R_0$ is fixed and
$r$ is bigger than $r_0(\eps)$ (which is larger and larger), we have $r - R_0
\geq r/2$. Therefore $\tldV'(r_\theta)\leq C r^ {-\gamma-1}$, and the first claim
is proved (for $\alpha<2\beta_0$).
 We now repeat the reasoning with $\tldV'$ instead of
$\tldV$: since we know how to bound $\tldV'$ on the cone of angle $2\beta_0$, we
deduce bounds on $\tldV''$ on the smaller cone of angle $\beta_0$.This concludes the proof of lemma
\ref{lmm=upperBoundsOnV}.

  Let us go back to the proof of \eqref{eq=firstOrderBnd}. The first-order Taylor expansion of $V(r_\theta)$ reads:
    \[
    \tldV(r_\theta)  = \tldV(r) + \int_{0}^{\beta} i(r-r_0)e^ {i\alpha} \tldV'(r_{i\alpha}) d\alpha.
    \]
    Using the upper bound on $\tldV'$ (previous lemma) and the lower bound on
    $\tldV$ (hypothesis), we see that $\abs{r\tldV'(r_{i\alpha}) }\leq C\abs{V(r)}$ (where $C$ does not depend on $\eps,\beta$). This shows \eqref{eq=firstOrderBnd}.

    The second bound follows from the Taylor expansion up to order $2$, using
    lemma \ref{lmm=upperBoundsOnV} and remark \ref{rmrk=aboutRAndRZero} to
    bound $\tldV''$ from above, and hypothesis \ref{hpthss=decay} to bound
    $\tldV'$  from below.  
  \end{proof}

  We also need to bound partial derivatives with respect to the cartesian
  coordinate $x_i$. 
  \begin{prpstn}
      \label{prpstn=boundsOnCartesianDerivatives}
      Each partial derivative of $V$ is smaller by a factor of $1/r$. More
      precisely, 
      \[
      \abs{\partial_x^\alpha V_\theta (x) } \aleq r(x)^{-\abs{\alpha}}
      \abs{V_\theta(x)}
      \]
      when $r\geq \rZero$.
  \end{prpstn}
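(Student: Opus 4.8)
The plan is to reduce the statement about Cartesian derivatives to the one-dimensional (radial) estimates already obtained in Lemma \ref{lmm=upperBoundsOnV}, by writing $\partial_{x_i}$ in polar coordinates and exploiting that, along the ray $\omega$, the scaled potential $V_\theta$ is just $r\mapsto\tldV_\omega(r_\theta)$. Concretely, for $x=r\omega$ one has the chain rule decomposition $\partial_{x_i} = \omega_i\,\partial_r + \frac{1}{r}(\text{tangential part})$, so that $\partial_{x_i}V_\theta(x) = \omega_i\,\partial_r\bigl(\tldV_\omega(r_\theta)\bigr) + \frac1r(\dots) = \omega_i e^{\theta}\tldV_\omega'(r_\theta) + \frac1r(\dots)$. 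The angular (tangential) derivatives hit only the dependence of $V_\theta$ on $\omega$, and since $V$ is analytic on the full cone $\mathcal S$ (Hypothesis \ref{hpthss=analyticity}), a Cauchy estimate in the angular variables — entirely analogous to the one used for Lemma \ref{lmm=upperBoundsOnV} but along the sphere directions — bounds each angular derivative of $\tldV_\omega(r_\theta)$ by $C|V_\theta(x)|$, without an extra decay factor; it is the explicit $\frac1r$ in front that supplies the claimed gain. For the radial term, Lemma \ref{lmm=upperBoundsOnV} gives $|\tldV'(r_{i\alpha})|\le C r^{-\gamma-1}$ while Hypothesis \ref{hpthss=decay} gives the lower bound $|V_\theta(x)|\gtrsim |V_\eps(r)|\gtrsim c_V r^{-\gamma}$ (using \eqref{eq=firstOrderBnd} to compare $V_\theta$ with $V$), so $|\partial_r(\tldV_\omega(r_\theta))|\lesssim r^{-1}|V_\theta(x)|$, which is exactly the $|\alpha|=1$ case.

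For higher order derivatives I would proceed by induction on $|\alpha|$. Differentiating the expression for $\partial_{x_i}V_\theta$ again produces: more radial derivatives $\tldV^{(k)}(r_\theta)$, which I control by iterating the Cauchy-formula argument of Lemma \ref{lmm=upperBoundsOnV} (each differentiation on a slightly smaller sub-cone costs one more factor $r^{-1}$, i.e. $|\tldV^{(k)}(r_\theta)|\le C_k r^{-\gamma-k}$, hence $\lesssim r^{-k}|V_\theta|$); extra factors of $1/r$ coming from differentiating the $\frac1r$ coefficients and the $\omega_i$'s, which only help; and mixed radial/angular terms, handled by combining the two types of Cauchy estimate. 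At each stage one re-expresses the bound in terms of $|V_\theta(x)|$ using \eqref{eq=firstOrderBnd} and the lower bound $|V_\eps(r)|\gtrsim r^{-\gamma}$, so that a net of $|\alpha|$ factors $1/r$ remains. Note that the restriction $r\ge\rZero$ guarantees (for $\eps$ small) that $r-R_0\ge r/2$, which is what makes the radii of the Cauchy circles comparable to $r$ and keeps all constants $\eps$-independent.

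The main obstacle I anticipate is the bookkeeping for the angular (tangential) derivatives: unlike the purely radial estimates of Lemma \ref{lmm=upperBoundsOnV}, one must quantify the analyticity of $\omega\mapsto V(r\omega)$ uniformly in $r$ and $\eps$, i.e. show that the complexified ray $r\omega$ stays inside the cone $\mathcal S$ when $\omega$ is moved by an amount of order one in the complex directions, and that on such a polydisc $|V|\le C|V_\theta(x)|$ with $\eps$-independent $C$. This requires checking that perturbing $\omega$ to $\omega+\zeta$ with $|\zeta|$ small keeps $|\xIm(r\omega+r\zeta)|/|\xRe(\cdot)|$ below $\tan(2\beta_0)$ — which holds because $|r\zeta|$ is a fixed fraction of $|r\omega|=r$ — and then applying the decay bound of Hypothesis \ref{hpthss=decay} on the enlarged region. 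Once this uniform-analyticity input is in place, the rest is the routine Cauchy-estimate induction sketched above; everything else (the chain rule, comparing $V_\theta$ with $V$ via \eqref{eq=firstOrderBnd}, the lower bound on $V$) is already available in the excerpt.
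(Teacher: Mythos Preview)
Your approach is correct but takes a genuinely different route from the paper. You decompose $\partial_{x_i}$ into radial and angular parts via polar coordinates, invoke Lemma~\ref{lmm=upperBoundsOnV} for the radial piece, and set up a separate Cauchy estimate in the $\omega$-variables for the tangential piece, then induct on $|\alpha|$ while tracking the mixed terms. The paper instead bypasses polar coordinates entirely: it freezes $x_2,\dots,x_d$, views $x_1\mapsto V_\theta(x_1,x_2,\dots,x_d)$ as an analytic function of a single complex variable on a disc of radius comparable to $r(x)$ (which stays inside the cone $\mathcal S$ for the same geometric reason you identify in your ``main obstacle'' paragraph), and applies Cauchy's formula directly in the Cartesian coordinate. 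The upper bound $|V|\aleq |z|^{-\gamma}$ on the disc together with the lower bound $|V_\theta(x)|\gtrsim r^{-\gamma}$ at the centre (via \eqref{eq=firstOrderBnd} and Hypothesis~\ref{hpthss=decay}) then gives the factor $r^{-1}$ per derivative, and iterating in each coordinate handles higher $|\alpha|$ without any radial/angular bookkeeping.

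What each approach buys: the paper's argument is shorter and avoids the inductive cross-terms you flag as the main obstacle; in particular there is no need to separately quantify angular analyticity, since the Cartesian disc automatically mixes radial and tangential directions. Your approach, on the other hand, makes the connection to Lemma~\ref{lmm=upperBoundsOnV} more explicit and would be the natural one if one only had analyticity in $r$ rather than in the full cone $\mathcal S$. Since Hypothesis~\ref{hpthss=analyticity} does give analyticity in all of $\mathcal S\subset\xC^d$, the direct Cartesian Cauchy estimate is available and is the cleaner choice here.
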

  \begin{proof}
     We use the same ideas as in the proof of lemma \ref{lmm=upperBoundsOnV}.
     Let $x$ be such that $r(x) \geq r_0$.
     Suppose we freeze the coefficients $x_2, \ldots x_d$, and consider the
     map:
     \[ \phi: x_1 \mapsto V_\theta(x_1, \ldots x_d).\]
     This function $\phi$ has an analytic continuation to a region that
     contains a circle of radius of order $r(x)$. Using the Cauchy formula on
     this circle, and the a priori upper and lower bounds on the analytic continuation
     of $V$, we prove the claim.
  \end{proof}


\subsection{The resolvent ``at infinity'' --- symbol bounds}
\label{sec=PDO}
\subsubsection{The strategy}
Following the strategy outlined in section \ref{sec=strategy}, we start
by defining the operator $H_1$. We obtain $H_1$ by modifying the original operator in two ways. First,  $V_\eps$ is replaced by a function $V_{1,\eps}$ such that:
\begin{itemize}
  \item $V_{1,\eps} = V_\eps$ on the support of $\chi_1$.
  \item $V_{1,\eps}$ is smooth and greater than $C\epsilon$ inside the ball $\largeBall$.
\end{itemize}
The second condition may be imposed since by definition of the radius $R(\epsilon)$, $V \geq C\epsilon$ on the boundary of the ball. 
\begin{rmrk}
    For notational convenience, and since the problematic behaviour of the operator comes from the part where $V_\eps = V_{1,\eps}$, we will write $V_1$, or even $V$, instead of $V_{1,\eps}$.
\end{rmrk}

The other modification is in the kinetic term. To define it, we let $h(x,\xi ;
\epsilon, \theta)$ be the symbol of the exterior-scaled Dirichlet operator (an
explicit expression is given in equation \eqref{eq=symbolDilatedLaplacian}). We
modify $h$ near the boundary to make it smooth: let $\chiSmooth$ be a smooth
cutoff function supported near $\largeBall$ and with value $1$ on the ball
(\cf{} figure \ref{fig=indicators} for a precise definition), we define the
smoothed symbol
\[
h_s(x,\xi;\eps,\theta) = \chi(x) \sigma(- \Delta) + (1- \chi(x)) \sigma( - \Delta_\theta). 
\]
Adding the scaled potential, we obtain: 
\[ 
  h_1(x,\xi;\eps,\theta) = h_s(x,\xi) + V_\theta(x).
\]
This function is, for each $x$, polynomial in $\xi$ (of order $2$). Therefore,
it defines by quantification (\cf{}  section \ref{sec=pseudors} in the appendix) an operator $H_1$.

The main idea is to construct an approximate resolvent by the following formula:
\[
 (H_1 - z)^{-1} \approx \pdo\left( \frac{1}{h_1 - z} \right).
\]
\begin{rmrk}
    This idea is behind the classical construction of a \emph{parametrix}
    (\cf{} appendix). However we need here an explicit $L^2$ control (not
    only smoothing), therefore we will use explicit expressions of the
    remainder, given in terms of oscillatory integrals (\cf{} theorem
    \ref{thrm=symbolicCalculus}, in the appendix).
\end{rmrk}
To apply regularity results from \PDO{} theory, we need estimates on the symbol and its derivatives.
\begin{prpstn}
  \label{prpstn=boundsOnTheSymbol}
  For some $\theta = i\beta$, there exists constants $c,C$ and $c_z$ (independent of $\eps,x,\xi$) such that, when  $z$ is on a small circle around $\lambda = \lambda_j(\eps)$ ($z = \lambda_j(1 + c_z e^{i\omega})$)
  \begin{align}
    \label{eq=TheLowerBound}
    \forall \eps,x,\xi, \quad
    \abs{ h_1(x,\xi) - z} &\geq c \max\left( M(x,\xi) ,\lambda\right)\\
    \label{eq=boundsOnDerivatives}
    \abs{\dXAlphaDXiBeta h_1(x,\xi)}
    & \leq 
    \left(\frac{r_0}{r}\right)^{\abs{\alpha}}
        \left( \ind{\beta = 0}\left( V(r) \ind{r>r_0} + \ind{r<r_0}\right) + \eps^2 \max(\frac{1}{r},\abs{\xi})^{2 - \abs{\beta}} \right),\\
    \label{eq=inverseBound}
    \abs{\dXAlphaDXiBeta \frac{1}{h_1(x,\xi) -  \lambda}}
    &\leq C\sum_{n=0}^{\abs{\alpha} +  \abs{\beta}}
    \left( \frac{r_0}{r} \right)^{\abs{\alpha}}
        \frac{M(x,\xi)^{n - \abs{\beta}/2}}{\max\left(M(x,\xi), \lambda\right)^{1+n}}
  \end{align}
  where $M(x,\xi) = \max( V_1(x), \eps^2 \abs{\xi}^2)$.
\end{prpstn}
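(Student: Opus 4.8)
The plan is to treat the three bounds in order, since each feeds into the next. First I would write down explicitly $h_1(x,\xi) = h_s(x,\xi) + V_\theta(x)$, recalling from the appendix (eq.~\eqref{eq=symbolDilatedLaplacian}) that the dilated Laplacian symbol is, up to the smooth cutoff $\chi$, of the form $\eps^2 e^{-2\theta}\xi_r^2 + \eps^2\Lambda(\omega,\xi_\omega)/r_\theta^2$ plus lower order, so that its real part is comparable to $\eps^2\max(1/r,\abs{\xi})^2$ and its imaginary part is $O(\beta)$ times that. The potential part contributes $V_\theta(x)$, which by Proposition \ref{prpstn=TaylorBoundsOnRTheta} equals $V(r,\omega)(1+O(\beta))$ in general, and in the region $V(r)\le 2\lambda_j$ has the refined form $V(r,\omega) + i\beta(r-r_0)\partial_r V(r,\omega)(1+O(\beta))$, whose imaginary part is, by the lower bound $\abs{V'_\eps(r)}\ge c_V V_\eps(r)$ of Hypothesis \ref{hpthss=decay} together with Remark \ref{rmrk=aboutRAndRZero} (giving $r-r_0\ge r/2$), bounded below by $c\beta\,r\cdot\frac{V(r)}{r} = c\beta V(r)$ in absolute value. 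This is the key structural fact: in the dangerous region where $V(r)$ is comparable to $\lambda_j$, the exterior scaling has pushed the symbol off the real axis by a definite fraction of $\lambda_j$.

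For the lower bound \eqref{eq=TheLowerBound}: I would split into cases according to whether $M(x,\xi) := \max(V_1(x),\eps^2\abs{\xi}^2)$ is large or small compared to $\lambda$. If $M(x,\xi) \ge K\lambda$ for a suitable constant $K$, then the real part of $h_1 - z$ already dominates (since $\xIm(h_1)$ and the displacement of $z$ are lower order and $z = \lambda_j(1 + c_z e^{i\omega})$ stays within $c_z\lambda_j$ of $\lambda_j$), giving $\abs{h_1 - z}\gtrsim M(x,\xi)$. If $M(x,\xi) \le K\lambda$, then both $\eps^2\abs{\xi}^2$ and $V_1(x)$ are $O(\lambda)$, so in particular $V(r)\le 2\lambda_j$ once $K$ is fixed and we are outside the ball, and we are in the refined regime above: $\xIm(h_1 - z) = \beta(r-r_0)\partial_r V(1+O(\beta)) - c_z\lambda_j\sin\omega$. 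Choosing $\beta$ a fixed small angle and then $c_z$ small enough relative to $\beta$ (this is where the quantifier order in the statement, ``there exists $\theta=i\beta, c_z$'', is used), the contribution from the contour radius $c_z\lambda_j$ cannot cancel the $c\beta\lambda_j$-sized imaginary part coming from the potential, so $\abs{h_1 - z}\ge \abs{\xIm(h_1-z)}\gtrsim \lambda$. Combining the two cases yields $\abs{h_1 - z}\ge c\max(M(x,\xi),\lambda)$. (Inside the ball $\largeBall$ the modified potential $V_1$ is $\ge C\eps$, hence much larger than $\lambda_j$, so that region falls into the first case trivially.)

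For \eqref{eq=boundsOnDerivatives}, the $\xi$-derivatives are immediate since $h_1$ is a quadratic polynomial in $\xi$ with coefficients of size $\eps^2\max(1/r,1)^2$, $\eps^2\max(1/r,1)$, $\eps^2$ respectively for $\abs{\beta}=0,1,2$, explaining the $\max(1/r,\abs{\xi})^{2-\abs{\beta}}$ factor; for $\abs{\beta}\ge 3$ the symbol derivative vanishes. The $x$-derivatives of the potential part are controlled by Proposition \ref{prpstn=boundsOnCartesianDerivatives}, which gives exactly the gain of $r^{-\abs{\alpha}}$, i.e.\ a factor $(r_0/r)^{\abs{\alpha}}$ after absorbing $r_0^{\abs{\alpha}}$; the $x$-derivatives of the kinetic coefficients ($1/r_\theta^2$ type terms, and the smooth cutoff $\chi$ whose derivatives are supported near $\largeBall\sim\{r\sim r_0\}$) are handled the same way, each differentiation in $x$ producing one power of $1/r$. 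The indicator split $\ind{r>r_0}$ vs.\ $\ind{r<r_0}$ just records that outside the ball the potential size is $V(r)$ while inside it is $O(1)$ (or $O(\eps)$, but $1$ suffices as an upper bound). Finally \eqref{eq=inverseBound} is the Fa\`a di Bruno / quotient-rule expansion of derivatives of $1/(h_1-\lambda)$: each derivative either hits an existing factor of $1/(h_1-\lambda)$, producing one more such factor and one derivative of $h_1$, so a term with $n$ factors of $1/(h_1-\lambda)$ carries a numerator which is a product of derivatives of $h_1$ of total order $\le \abs{\alpha}+\abs{\beta}$ distributed among $n$ slots; bounding each $h_1$-derivative by \eqref{eq=boundsOnDerivatives}, each $1/(h_1-\lambda)$ by \eqref{eq=TheLowerBound}, and bookkeeping the powers of $M(x,\xi)^{1/2}$ from $\xi$-derivatives against the $\max(M,\lambda)^{-(1+n)}$ from the resolvent factors gives the stated sum over $n$. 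The main obstacle is genuinely the lower bound \eqref{eq=TheLowerBound} in the transition regime $M(x,\xi)\approx\lambda$: one must verify that the imaginary part supplied by exterior scaling really is bounded below by a fixed multiple of $\lambda_j$ uniformly in $\eps$, which rests essentially on the non-trapping-type lower bound $\abs{V'_\eps}\ge c_V V_\eps$ in Hypothesis \ref{hpthss=decay} and on the geometric fact $r-r_0\ge r/2$ from Remark \ref{rmrk=aboutRAndRZero}; once $\beta$ is fixed and $c_z$ chosen afterwards, the rest is bookkeeping.
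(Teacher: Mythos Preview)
Your overall architecture matches the paper's: split according to the size of $M(x,\xi)$ relative to $\lambda$, use positivity (arguments close to the positive real axis) in the large-$M$ region, and use the imaginary part produced by exterior scaling in the small-$M$ region; then Leibniz/Fa\`a di Bruno for \eqref{eq=boundsOnDerivatives} and \eqref{eq=inverseBound}. The derivative part and the first case are fine.

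There is, however, a genuine gap in your second case. Your displayed expression
\[
\xIm(h_1 - z) = \beta(r-r_0)\partial_r V\,(1+O(\beta)) - c_z\lambda_j\sin\omega
\]
is incomplete: it drops the kinetic contribution. In the region $r\gg r_0$ the main kinetic term is $\eps^2 e^{-2\theta}\abs{\xi}^2$, whose imaginary part is $-2\beta\eps^2\abs{\xi}^2(1+O(\beta))$; when $\eps^2\abs{\xi}^2$ is of order $\lambda$ this is the same size as the term you kept, so it cannot simply be absorbed into an $O(\beta)$. More seriously, your lower bound on the potential contribution, ``$c\beta\lambda_j$-sized imaginary part coming from the potential'', uses $\abs{(r-r_0)\partial_r V}\gtrsim V(r)$ together with $V(r)\sim\lambda$. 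But in your second case you only know $V(r)\le K\lambda$, not $V(r)\ge \lambda/K$: when $V(r)\ll\lambda$ (which does occur, since $V\to 0$ at infinity) the potential's imaginary contribution is $\sim\beta V(r)\ll\beta\lambda$, and your argument gives nothing.

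The paper's device for handling both issues at once is to look at $e^{2\theta}(h_1-\lambda)$ rather than $h_1-\lambda$. This rotation makes the leading kinetic term $\eps^2\abs{\xi}^2$ real again, so that the only surviving kinetic imaginary part comes from the correction factor $(r^2/r_\theta^2)e^{2\theta}-1 = 2i\beta\,r_0/r\,(1+O(\beta))$, which is suppressed by $r_0/r\ll 1$. At the same time, multiplying $V_\theta-\lambda$ by $e^{2\theta}$ introduces an extra $2(V(r)-\lambda)$ into the imaginary part, so that one must bound
\[
\bigl|(r-r_0)V'(r) + 2(V(r)-\lambda)\bigr| \;\ge\; c\lambda
\]
rather than just $\abs{(r-r_0)V'(r)}$. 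This is exactly what rescues the sub-regime $V(r)\le \lambda/2$: there $2(V(r)-\lambda)\le -\lambda$ already does the job, while for $V(r)\in[\lambda/2,(1+c_S)\lambda]$ the $V'$ term (with its definite sign) dominates. Without the $e^{2\theta}$ rotation you would have to reintroduce the kinetic imaginary part by hand and run a further case split on the size of $\eps^2\abs{\xi}^2$; that can be made to work, but it is exactly the bookkeeping the rotation is designed to avoid.
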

This is proved in the following sections (\ref{sec=boundsOnTheSymbol}, \ref{sec=boundsOnDerivatives}).

\bigskip

The next step is to use the pseudo-differential theory to obtain operator bounds:
\begin{prpstn}
  \label{prpstn=boundsOnOperators}
  \begin{enumerate}
    \item The approximate resolvent $G = \pdo\left(1 / (h_1 - z) \right)$ is ``almost'' bounded by $1/\lambda$: for all $\eta>0$, 
      \begin{equation}
	\label{eq=boundOnG}
	\norm{G} \aleq C_\eta\lambda_j^{-(1+\eta)}.
      \end{equation}
    \item The same estimate holds for the real resolvent $(H_1 - z)^{-1}$.
  \end{enumerate}
\end{prpstn}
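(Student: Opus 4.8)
\textbf{Proof strategy for Proposition \ref{prpstn=boundsOnOperators}.}

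The plan is to deduce the operator bounds from the symbol estimates of Proposition \ref{prpstn=boundsOnTheSymbol} via the symbolic calculus recalled in the appendix (Theorem \ref{thrm=symbolicCalculus}). For part (1), I would first observe that the pointwise bound \eqref{eq=inverseBound} with $\abs{\alpha}=\abs{\beta}=0$ gives $\abs{1/(h_1-z)}\aleq 1/\max(M(x,\xi),\lambda)\le 1/\lambda$, so the symbol of $G$ is uniformly $\bigO(1/\lambda)$; however, the naive Calder\'on--Vaillancourt estimate only controls $\norm{G}$ by a finite number of seminorms of the symbol, and those seminorms (from differentiating $1/(h_1-z)$) degenerate like negative powers of $\lambda$. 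The key point is that each $x$- or $\xi$-derivative is compensated by a factor $r_0/r\le 1$ or by the structure $M(x,\xi)^{n-\abs\beta/2}/\max(M,\lambda)^{1+n}$, which is \emph{scale-invariant}: on the region $M\gtrsim\lambda$ it is bounded by $M^{-\abs\beta/2}\le(\eps\abs\xi)^{-\abs\beta}$ up to constants, i.e. it behaves like the symbol of an operator of order $0$, and on the region $M\lesssim\lambda$ it is bounded by $M^{n-\abs\beta/2}\lambda^{-1-n}$. I would split the symbol accordingly (using a dyadic or smooth partition in the quantity $M/\lambda$) into a ``good'' part that is a genuine symbol of order $0$ with $\eps$-uniform seminorms — hence an $L^2$-bounded operator with norm $\bigO(1)$ after the $\eps^2\abs\xi^2$ rescaling — and a ``bad'' part supported where $M\lesssim\lambda$, whose symbol is $\bigO(1/\lambda)$ together with all derivatives weighted appropriately. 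For the bad part one pays at most one factor $\lambda^{-1}$ for the size and then, for each of the finitely many derivatives needed in Calder\'on--Vaillancourt, a further factor that is no worse than $\lambda^{-\eta}$ once $\eps$ is small (this is where the $\eta$ is spent: the number of derivatives is fixed by the dimension, each contributes a harmless bounded factor times possibly a $\log(1/\lambda)$ or small negative power, and $\lambda=\lambda_j\logSim e^{-d_j/\eps}$ makes any fixed power of $\log(1/\lambda)$ absorbable into $\lambda^{-\eta}$). This yields \eqref{eq=boundOnG}.

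For part (2), the strategy is the standard parametrix argument: by Theorem \ref{thrm=symbolicCalculus} one has $(H_1-z)\,G = I + R$, where $R$ is the operator whose symbol is the remainder of the symbolic product, given explicitly by an oscillatory integral involving $\partial_\xi h_1\cdot\partial_x\bigl(1/(h_1-z)\bigr)$ and similar terms. Using \eqref{eq=boundsOnDerivatives} and \eqref{eq=inverseBound} one estimates the remainder symbol: every factor $\partial_x$ hitting the potential-type term produces a gain $r_0/r$, and combined with the $\eps^2$ in front of the kinetic term and the $\max(1/r,\abs\xi)^{2-\abs\beta}$ structure, the remainder symbol is smaller than the principal one by a genuine small factor (morally $\eps$, or at worst $\eps^{1-\eta}$). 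Hence $\norm{R}\le 1/2$ for $\eps$ small, $I+R$ is invertible with $\norm{(I+R)^{-1}}\le 2$, and $(H_1-z)^{-1}=G(I+R)^{-1}$ satisfies the same bound $\aleq C_\eta\lambda_j^{-(1+\eta)}$ as $G$. One should also check that $G$ maps into the domain of $H_1$ and that the identity $(H_1-z)G=I+R$ holds on the nose (not just modulo smoothing), which is why the appendix uses exact oscillatory-integral remainders rather than asymptotic expansions.

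\textbf{Main obstacle.} The delicate point is not the parametrix bookkeeping but the $L^2$ bound in part (1): one must extract a clean $\lambda^{-(1+\eta)}$ and not merely $\lambda^{-N}$ for some large $N$ coming from the number of derivatives in Calder\'on--Vaillancourt. This forces a careful reading of \eqref{eq=inverseBound} to see that, after separating the two regimes $M\gtrsim\lambda$ and $M\lesssim\lambda$, the dangerous negative powers of $\lambda$ appear only through the \emph{size} of the symbol (one factor $\lambda^{-1}$) while the derivatives in the region $M\gtrsim\lambda$ are $\eps$- and $\lambda$-uniform after the $\eps\xi$ rescaling. Getting the partition of unity in $M/\lambda$ to interact correctly with the $x$-derivatives (which also act on the cutoff and produce factors of $1/\lambda$ times $\partial M$, which are again controlled because $\abs{\partial_x M}\aleq M/r$ by Proposition \ref{prpstn=boundsOnCartesianDerivatives}) is the part that requires genuine care; everything else is routine pseudodifferential calculus.
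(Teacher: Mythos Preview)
Your part (2) parametrix argument is essentially the paper's argument and is fine (the paper even simplifies it: since $h_1$ is quadratic in $\xi$, the expansion \eqref{eq=asymptoticExpansion} terminates at $N=2$ with $r_3\equiv 0$, so the remainder is an honest finite sum $\sum R_i+\sum R_{ij}$, no oscillatory-integral tail to estimate).

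The gap is in part (1). You propose to run the ordinary $L^\infty$-seminorm Calder\'on--Vaillancourt after splitting into $M\gtrsim\lambda$ and $M\lesssim\lambda$, and you claim that in the bad region each derivative costs only a log or $\lambda^{-\eta}$. That is not true: on $\{M\lesssim\lambda\}$ one has, for the toy piece $1/(\lambda+\eps^2\xi^2)$, the sharp pointwise bound $\abs{\partial_\xi^\beta g}\aleq\lambda^{-1-\abs\beta/2}$, so each $\xi$-derivative loses a genuine factor $\lambda^{-1/2}$, and with the $\lfloor d/2\rfloor+1$ derivatives required you end up with $\lambda^{-1-c}$ for some $c\approx d/4$, not $\lambda^{-(1+\eta)}$. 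Your partition in $M/\lambda$ does not help, because the cutoff itself gets hit by $\partial_\xi$ and produces the same $\lambda^{-1/2}$ per derivative. The paper discusses exactly this obstruction in the appendix (the toy symbol $1/(\lambda+\xi^2)$).

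What saves the day, and what your sketch is missing, is that the bad region has \emph{small volume} in $\xi$: the set $\{\eps^2\xi^2\lesssim\lambda\}$ has measure $\aleq(\sqrt\lambda/\eps)^d$. An $L^\infty$ criterion cannot see this; one needs a Calder\'on--Vaillancourt theorem stated in terms of \emph{local $L^2$} norms of the symbol (Boulkhemair's version, Theorem~\ref{thm=CalderonVaillancourt}). Integrating $\abs{\partial_\xi^\beta g}^2$ over a unit box in $\xi$ recovers, via the change of variables $u=\eps\xi/\sqrt\lambda$, exactly the powers of $\lambda$ lost to the derivatives, and one gets $\int\abs{\partial_\xi^\beta g}^2\chi\,dx\,d\xi\aleq\lambda^{-2}$ for $\abs\beta\le\lfloor d/2\rfloor$ and $\aleq\lambda^{-2-\abs\beta+d/2}$ for the one extra derivative. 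Interpolating to fractional $s>d/2$ then gives $\norm{G}\aleq\lambda^{-(1+s-d/2)}$ with $\eta=s-d/2$ arbitrarily small. The same local-$L^2$ mechanism is what makes the remainder $R$ in part (2) small, so your part (2) also implicitly depends on this.
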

This is proved in 
section \ref{sec=fromPDOtoResolvent}.

\subsubsection{The lower bound}
\label{sec=boundsOnTheSymbol}
In this section we prove proposition \ref{prpstn=boundsOnTheSymbol}. Note that it is enough to show a lower bound on $\abs{h_1(x,\xi) - \lambda}$ (from which the desired bound follows, up to a change of $c_z$ and $c$).
  Recall that
  \begin{align}
      \notag
  h_1 - \lambda &= h_s(x,\xi) + V_1(x) - \lambda \\
  \label{eq=decompositionSymbole}
  &= \eps^2\xi^2\left( \chi(x) + e^{-2\theta}(1 - \chi(x))\right)\\
  \notag
   &\quad + \eps^2(1 - \chi(x))\left( \frac{r^2}{r_\theta^2} - e^{-2\theta}\right) \left( \abs{\xi}^2 + \sigma(D^2) \right)\\
   \notag
   &\quad + V_1(x_\theta) - \lambda.
 \end{align}
  where $\chi(x)$ is $1$ for $r\leq r_0$ and $0$ at infinity.
 
  We use different arguments for different regions of $(x,\xi)$. 
  Let us begin by an informal explaination before we go into details. In the ``interesting'' regions ($x$ sufficiently large), $h_1 - \lambda$ should behave in first approximation like its real part, which looks like
  \[
    e^{-2\theta}\eps^2\xi^2 + V_1(x) - \lambda.
  \]
    So when $V$ is large  enough with respect to $\lambda_j$, we can use this real part and positivity to get the desired bounds.
    
    When $V$ is approximately $\lambda$, or even smaller,  the real part will not give us the bound.
    Therefore, we multiply by  $e^{2\theta}$ (to move the kinetic part back to $\xR$), and bound the imaginary part of (approximately) $e^{2\theta}(V_1 - \lambda)$. The result then follows from the development \eqref{eq=secondOrderBnd} of $V$.

  Note that we keep the $\eps^2 \xi^2$ as a term in the maximum, so as to deal with the ``large $\xi$'' regions when we consider derivatives later on, but most of the trouble comes from the potential part.

  We state here two results we will need in the proof. The first one concerns the symbol of $D^2$.
\begin{prpstn}
  The derivatives of the symbol of $D^2$  admits the following bounds:
  \begin{equation}
    \label{eq=boundOnDTwo}
    \abs{\partial_x^\alpha\partial_\xi^\beta \sigma(D^2)}
    \leq C_n\frac{1}{r^{\abs{\alpha}}} \max\left(\frac{1}{r}, \abs{\xi} \right)^{2-\abs{\beta}},
  \end{equation}
  for all multi-indices $\alpha$, $\beta$. (The derivatives are $0$ if $\abs{\beta}\geq 3$).
\end{prpstn}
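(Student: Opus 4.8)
The plan is to write $\sigma(D^2)$ explicitly and then deduce the bound from a purely homogeneity-based bookkeeping. Recall from the computation leading to \eqref{eq=decomposition} that, with $\omega = x/r$,
\[
D^2 = \frac{(n-1)(n-3)}{4r^2} + \frac{n-1}{r}\,\omega\cdot\grad + (\omega\cdot\grad)^2 .
\]
The first step is to compute the symbol of the last term. Since $\sum_j \omega_j\,\partial_{x_j}\omega_k = \frac1r\sum_j\omega_j(\delta_{jk}-\omega_j\omega_k) = 0$, the operator $(\omega\cdot\grad)^2$ equals $\sum_{j,k}\omega_j\omega_k\,\partial_{x_j}\partial_{x_k}$ with no first-order remainder, so its symbol is exactly $-(\omega\cdot\xi)^2$. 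Hence
\[
\sigma(D^2)(x,\xi) = -(\omega\cdot\xi)^2 + \frac{i(n-1)}{r}\,(\omega\cdot\xi) + \frac{(n-1)(n-3)}{4r^2},
\]
which is a polynomial of degree $2$ in $\xi$; in particular $\partial_\xi^\beta\sigma(D^2)\equiv 0$ as soon as $\abs{\beta}\ge 3$, which settles the parenthetical claim.

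The second step is to exploit the scaling structure. Write $\sigma(D^2) = T_0 + T_1 + T_2$, where $T_a$ is the summand carrying the $r^{-a}$ prefactor: $T_0 = -(\omega\cdot\xi)^2$, $T_1 = i(n-1)r^{-1}(\omega\cdot\xi)$, $T_2 = \frac14(n-1)(n-3)r^{-2}$. On $(\R^d\setminus\{0\})\times\R^d$ each $T_a$ is smooth and is positively homogeneous of degree $-a$ in $x$ and of degree $p_a := 2-a$ in $\xi$. Both homogeneities are inherited by all derivatives: $\partial_x^\alpha\partial_\xi^\beta T_a$ is smooth on $(\R^d\setminus\{0\})\times\R^d$, positively homogeneous of degree $-(a+\abs{\alpha})$ in $x$ and of degree $p_a-\abs{\beta}$ in $\xi$, and identically zero once $\abs{\beta}>p_a$. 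A function smooth on $(\R^d\setminus\{0\})\times\R^d$ that is homogeneous of degree $-b$ in $x$ and $c\ge 0$ in $\xi$ is bounded by $C\abs{x}^{-b}\abs{\xi}^c$, with $C$ its supremum over the compact product of unit spheres $\{\abs{x}=1\}\times\{\abs{\xi}=1\}$. Applying this, whenever $\abs{\beta}\le p_a = 2-a$,
\[
\abs{\partial_x^\alpha\partial_\xi^\beta T_a(x,\xi)} \le C_{a,\alpha,\beta}\; r^{-(a+\abs{\alpha})}\,\abs{\xi}^{\,2-a-\abs{\beta}} .
\]

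The last step is to feed this into the maximum. Since $2-a-\abs{\beta}\ge 0$ in the nonzero case, $r^{-a}\le\max(1/r,\abs{\xi})^{a}$ and $\abs{\xi}^{\,2-a-\abs{\beta}}\le\max(1/r,\abs{\xi})^{\,2-a-\abs{\beta}}$, so
\[
\abs{\partial_x^\alpha\partial_\xi^\beta T_a(x,\xi)} \le C_{a,\alpha,\beta}\,\frac{1}{r^{\abs{\alpha}}}\,\max\!\left(\frac1r,\abs{\xi}\right)^{2-\abs{\beta}} .
\]
Summing over $a\in\{0,1,2\}$ and taking $C_n$ to be the sum of the three constants (depending only on the dimension and on $\abs{\alpha}+\abs{\beta}$) gives \eqref{eq=boundOnDTwo}; when $\abs{\beta}\ge 3$ both sides are $0$. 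The argument is essentially routine bookkeeping: the only points that require genuine attention are the symbol computation itself — in particular the cancellation that kills the subprincipal part of $(\omega\cdot\grad)^2$ — and the observation that the two separate homogeneities ($-a$ in $x$, $2-a$ in $\xi$) of each monomial survive differentiation. Once these are in hand I do not expect a real obstacle.
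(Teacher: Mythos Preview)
Your argument is correct and is exactly the approach the paper indicates: it records the explicit symbol (identical to the one in the appendix, up to writing $x\cdot\xi=r\,\omega\cdot\xi$) and then invokes homogeneity in $x$ and the polynomial structure in $\xi$. The paper's own proof is a single sentence (``The bounds come from the homogeneity in $x$ and `polynomialness' in $\xi$''); you have simply carried out that bookkeeping in full, including the cancellation $\sum_j\omega_j\partial_j\omega_k=0$ that keeps $(\omega\cdot\nabla)^2$ free of a first-order term.
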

The expression of $\sigma(D^2)$ is shown in the appendix. The bounds come from the  homogeneity in $x$ and ``polynomialness'' in $\xi$. 

The second result we need is the following elementary lemma, on the sum of ``almost real positive'' numbers.
\begin{lmm}
  \label{lmm=simpleLemma}
  If $a,b$ are two complex numbers with arguments in $-\pi/4, \pi/4$, then $\abs{a + b} \geq \max( \abs{a}, \abs{b})$.
\end{lmm}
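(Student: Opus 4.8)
The plan is to reduce everything to the elementary observation that two non-zero complex numbers whose arguments differ by at most $\pi/2$ have a product with non-negative real part. By the symmetry of the hypothesis in $a$ and $b$, it suffices to prove $\abs{a+b}\geq\abs{a}$; the inequality $\abs{a+b}\geq\abs{b}$ then follows by exchanging the roles of $a$ and $b$, and the two together give the desired bound by the maximum. I would dispose of the degenerate cases $a=0$ or $b=0$ first (trivial), so that the arguments are well defined in what follows.

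First I would expand
\[
\abs{a+b}^2 = \abs{a}^2 + \abs{b}^2 + 2\,\xRe(a\bar b).
\]
Since $\arg(a)\in[-\pi/4,\pi/4]$ and $\arg(b)\in[-\pi/4,\pi/4]$, choosing these representatives gives $\arg(a\bar b) = \arg(a)-\arg(b)\in[-\pi/2,\pi/2]$, hence $\xRe(a\bar b) = \abs{a}\,\abs{b}\cos(\arg(a\bar b))\geq 0$. Therefore $\abs{a+b}^2 \geq \abs{a}^2+\abs{b}^2 \geq \abs{a}^2$, which is exactly the claim.

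There is no genuine obstacle here: the statement is a one-line computation once the product $a\bar b$ is introduced. The only point deserving a moment's care is the bookkeeping of arguments modulo $2\pi$ — one must use the representatives of $\arg(a)$ and $\arg(b)$ lying in $[-\pi/4,\pi/4]$ so that their difference genuinely lies in $[-\pi/2,\pi/2]$ rather than wrapping around — but this is automatic under the stated hypothesis. An equally short alternative would be to remark that both $a$ and $b$ lie in the closed convex cone $\{z : \xRe(z)\geq\abs{\xIm(z)}\}$, which is stable under addition, and then compare $\abs{a+b}$ with its projection onto one of the two supporting lines of the cone; I would nonetheless keep the $a\bar b$ computation, being the most direct.
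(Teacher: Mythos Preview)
Your proof is correct. The paper does not actually supply a proof of this lemma --- it merely states it as an ``elementary lemma'' and moves on --- so your argument via $\abs{a+b}^2 = \abs{a}^2 + \abs{b}^2 + 2\,\xRe(a\bar b)$ and the observation $\arg(a\bar b)\in[-\pi/2,\pi/2]$ is a perfectly adequate way to fill the gap.
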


We are ready to tackle the first case, when $V$ is larger than $\lambda$. We need a safety margin, and we define:
\begin{equation}
  \label{eq=defSecurity}
  \safetyCst = \frac{c_V}{12 C_V}
\end{equation}
\paragraph{\bfseries First case: $V(r) \geq (1 + \safetyCst) \lambda$.}
   Let us slightly rewrite $h_1$:
   \begin{align}
     \label{eq=decompositionHOne}
  h_1 - \lambda &= \eps^2\xi^2 \times \left( \left( \chi(x) + \frac{r^2}{r_\theta^2}(1 - \chi(x))\right)\right)\\
  \notag
  &\quad  +\eps^2\left((1 - \chi(x))\left( \frac{r^2}{r_\theta^2} - e^{-2\theta}\right)  \sigma(D^2)\right)\\
   \notag
   &\quad + V_1(x_\theta) - \lambda \\
   \notag
   &= \eps^2 \xi^2 K_1 + K_2 + P.
  \end{align}
  The first kinetic factor $K_1$  is a convex combination of $1$ and the
  complex number $\frac{r^2}{r_\theta^2}$, the latter having  small argument
  (for small beta, independent of $\eps,r_0,r$), and a norm bigger than $1$:
  therefore, $\abs{\arg(\eps^2\xi^2)} < \pi/4$, and $\abs{\eps 2\xi^2 K_1}
  \geq \eps^2 \xi^2$.
  
  For the potential term $P$ we use the development \eqref{eq=firstOrderBnd}. 
  So
  \begin{align*}
    V(x_\theta) -\lambda &= V(r)(1 + \bigO(\beta)) - \lambda \\
    &= (V(r) - \lambda) (1 + \bigO(\beta)).
  \end{align*}
  Since $\lambda\leq \frac{1}{1+\safetyCst} V(r)$, $V(r) - \lambda \geq
  \frac{\safetyCst}{1+\safetyCst} V(r)$. Therefore, for $\beta$ small enough,
  and for  some constant $c$,  $\abs{V(x_\theta) - z} \geq cV(r)$ and $\abs{ \arg(V(x_\theta) - z) } < \pi/4$.

  Finally, $\abs{K_2}$ is small, in modulus, with respect to one of the two other terms. 
  Indeed, equation \eqref{eq=boundOnDTwo} entails:
  \begin{equation}
      \label{eq=controlOfK_2}
    \abs{K_2} \leq 5C_N\beta \eps^2 \max\left(\frac{1}{r},\abs{\xi}\right)^2
    \leq \left|
      \begin{array}{ll}
	c\beta V(r)               &\text{ if } \frac{1}{r}\geq \abs{\xi}, \\
	c\beta \eps^2 \abs{\xi}^2 &\text{ if } \abs{\xi}\geq \frac{1}{r}.
      \end{array}
      \right.
  \end{equation}]]]
  This, combined with lemma \ref{lmm=simpleLemma}, shows  that $\abs{h_1 -\lambda} \geq c' \max(\eps^2\abs{\xi}^2, \abs{V(x)}, \abs{\lambda})$, as announced.

  \bigskip

  This ``positivity'' argument  still works in a slightly different setting.
  Indeed, if $V(r) \leq (1 + \safetyCst) \lambda$ but $\eps^2\abs{\xi}^2 \geq
  \lambda$, then $\abs{P} = \abs{V(x_\theta) - \lambda } \leq
  \safetyCst\lambda(1 + \bigO(\beta))\leq 2\safetyCst \eps^2 \xi^2$ (make
  $\bigO(\beta)$ smaller than $1$); and $\abs{K_2} \leq c\beta
  \eps^2\abs{\xi}^2$ (thanks to eq. \eqref{eq=controlOfK_2}). So
  \begin{equation}
    \label{eq=positivityForSmallVLargeXi}
  \abs{ h_1 - \lambda }  = 
  \abs{ \eps^2 \xi^2 K_1  + K_2 + P}
  \geq \frac{1}{2}\eps^2 \xi^2 -  c\beta \eps^2 \xi^2 -  2\safetyCst
  \eps^2\xi^2 \geq (\frac{1}{2} - c\beta - 2\safetyCst)\max(\eps^2\abs{\xi}^2, \lambda),
\end{equation}
and the lower bound \eqref{eq=TheLowerBound} holds (since $\safetyCst$ and
$\beta$ may be taken small).

  \paragraph{\bfseries Second case: $V(r)< (1 + \safetyCst ) \lambda$.}
  Note that this in this region, $r$ is much bigger than $r_0$, therefore
  $\chi$ is $0$. We will  even take $\eps$ small enough so that
  \begin{equation}
    \label{eq=rIsSmall}
    \frac{r_0(\eps)}{r} \leq \frac{\safetyCst}{2(C_N+1)C_V}.
  \end{equation}
  Let us multiply the symbol by $e^{2\theta}$:
  \begin{equation}
    \label{eq=rotatedSymbol}
    e^{2\theta} (h_1 - \lambda)  
    = \eps^2\abs{\xi}^2 + \eps^2\left( \frac{r^2}{r_\theta^2}e^{2\theta} - 1 \right) \left( \abs{\xi}^2 + \sigma(D^2) \right)
      + e^{2\theta}\left( V_1(x_\theta ) - \lambda\right).
  \end{equation}
   We develop the last product for small $\beta$ (recall $\theta = i\beta$). Since $ e^{2\theta} = 1 + 2i\beta(1+ \bigO(\beta))$, 
    \begin{align*}
      e^{2\theta}(V(x_\theta) - \lambda)
      &=  V(x_\theta) - \lambda + 2i\beta(1 + \bigO(\beta)(V(x_\theta) - \lambda).
    \end{align*}
    We develop the first $V(x_\theta)$ to the second order
    (using \eqref{eq=secondOrderBnd}, which holds in this region) and the other
    to the first order (eq.~\eqref{eq=firstOrderBnd}). This yields
    \begin{align}
    e^{2\theta}(V(x_\theta) - \lambda) 
    &= V(r) - \lambda + i\beta(r-r_0)V'(r)\left(1 + \bigO(\beta)\right) \notag \\
    &\quad  + 2i\beta(V(r) - \lambda)(1 + \bigO(\beta))
    \notag \\
\label{eq=devPotentialTerm}
    &= V(r) - \lambda + i\beta\left( (r-r_0)V'(r) + 2(V(r) - \lambda) \right)
      \left(1 + \bigO(\beta)\right).
   \end{align}
   The correction term in the kinetic part  can be shown to have the following development:
   \begin{equation}
     \label{eq=devCorrection}
     \left(\frac{r^2}{r_\theta^2}e^{2\theta} -1\right) = \frac{2ir_0}{r} \beta(1 + \bigO(\beta)).
   \end{equation}

   We return  to $h_1 - \lambda$, and focus on its imaginary part. We use the notation $z_1\equiv z_2$ if $\xIm(z_1) = \xIm(z_2)$. Plug the developments  \eqref{eq=devPotentialTerm} and \eqref{eq=devCorrection} into \eqref{eq=rotatedSymbol}, and dismiss real parts:
   \begin{align}
     \notag
    e^{2\theta}\left( h_1 - \lambda\right)
    &\equiv  2i\beta\frac{r_0}{r} \eps^2 \left( \abs{\xi}^2 + \sigma(D^2) \right)
                                         \left(1 + \bigO(\beta)\right) \\
    &\quad + i\beta\left( (r-r_0) V'(r) + 2(V(r) - \lambda)\right) \left(1 + \bigO(\beta)\right) 
    \notag\\
    \label{eq=almostDone}
    &\equiv i\beta\left(1+\bigO(\beta)\right)\left(%
	\frac{r_0}{r}\eps^2\left(\abs{\xi}^2 + \sigma(D^2) \right) 
        + (r-r_0)V'(r) + 2(V(r) - \lambda)
      \right)
  \end{align}
 Now, $\abs{\abs{\xi}^2 + \sigma(D^2)} \leq (1+C_N)\max\left(\abs{\xi}^2, \frac{1}{r^2}\right)$ (\cf{} \eqref{eq=boundOnDTwo}). Since on the one hand, $r^{-2}\leq r^{- \gamma}\leq C_V V(r)\leq 2C_V\lambda$, and on the other hand we may suppose $\eps^2\abs{\xi}^2 \leq \lambda$ (\cf{} the discussion that leads to eq. \eqref{eq=positivityForSmallVLargeXi}), we obtain thanks to eq. \eqref{eq=rIsSmall}: 
  \[
    \abs{ \frac{r_0}{r}\eps^2 (\abs{\xi}^2 + \sigma(D^2))} \leq \safetyCst\lambda.
  \]
  We claim that the second term in \eqref{eq=almostDone} is bounded below:
  \begin{equation}
    \label{eq=nonTrapping}
    \abs{ (r-r_0)V'(r)  + 2(V(r) - \lambda) }\geq \safetyCst \lambda
  \end{equation}
  Indeed, if $V\geq \lambda/2$, $2(V(r) - \lambda)\leq 2\safetyCst\lambda$, and 
  $(r-r_0)V'(r) \leq \frac{r}{2}V'(r) \leq \frac{c_V}{4C_V}\lambda \leq
  3\safetyCst\lambda$ (using the
  negativity of $V'$, the bounds on $V$ and the definition \eqref{eq=defSecurity} of
  $\safetyCst$).
  This implies \eqref{eq=nonTrapping}. If $V\leq \lambda/2$, the $V(r) - \lambda$ term suffices to show the bound (the  $V'$ term being negative).

  Getting back to \eqref{eq=almostDone}, we obtain
  \begin{align*}
    \abs{h_1 - \lambda)} &\geq \xIm( e^{2\theta} (h_1 - \lambda) ) \\
    & \geq \safetyCst\lambda(1 + \bigO(\beta))
  \end{align*}
  Since in this case, $\lambda \geq \eps^2 \abs{\xi^2}$, the proof of \eqref{eq=TheLowerBound} is finally complete.

  \begin{rmrk}
      \label{rmrk=aboutNonTrapping}
      Equation \eqref{eq=nonTrapping} is a kind of ``non-trapping''
      condition. It says that $V'(r)$ is ``negative enough'' with respect
      to $V$. Informally speaking, a classical particle in that potential
      should escape to infinity (and not get trapped).
  \end{rmrk}


\subsubsection{Bounds on derivatives}
\label{sec=boundsOnDerivatives}
We have seen in detail how to bound the symbol from below. Bounding the derivatives is then mainly a technical problem, and uses the same ideas as before. Therefore, we only give a brief outline of the proofs. 

Recall the decomposition \eqref{eq=decompositionSymbole} of $h(x,\xi)$. We have already seen the behaviour of the derivatives of $\sigma(D^2)$ (\cf{} \eqref{eq=boundOnDTwo}) and $V$. The $x$-derivatives of $\chiSmooth$ satisfy: 
\[ 
  \abs{\dXAlpha (\chiSmooth)} \leq C_\alpha \ind{r_0,r_0 +1}(x).
\]
Using the explicit expression of $r_\theta$, one can see that there exists constants such that
\[
  \abs{\dXAlpha\left( \frac{r^2}{r_\theta^2} - e^{2\theta}\right)}
  \leq C_\alpha \left(\frac{r_0}{r}\right)^{1+\abs{\alpha}} .
\]
The effect of $x$-derivatives on $V$ has already been seen: intuitively, each derivative gains a factor of $1/r$ (\cf{}
proposition \ref{prpstn=boundsOnCartesianDerivatives})

The bounds on the $\xi$-derivatives are even simpler, due to the polynomial character of the symbol. All these estimates imply the bound \eqref{eq=boundsOnDerivatives} on $\dXAlphaDXiBeta h_1$.

\bigskip

To bound the derivatives of $g = 1/(h-z)$, remark that $\dXAlphaDXiBeta g$ is a sum of terms of the following type:
\[ 
   \frac{%
   \prod_{j=1}^k
    \left(\partial_x^{\alpha_j} \partial_\xi^{\beta_j} h \right)^{n_j}
    }{%
    (h- z)^{1 + \sum n_j}
    },
\]
where $n_j\in\xN$, $\alpha_j$ and $\beta_j$ are multiindices, and $\sum_j
n_j\alpha_j = \alpha$, $\sum_j n_j\beta_j = \beta$. We may now use the bound
\eqref{eq=boundsOnDerivatives} on each term. Denote by $n$ the sum $\sum n_i 
\leq \abs{\alpha} + \abs{\beta}$.  All the $(r_0/r)^{\alpha_j}$
terms coming from \eqref{eq=boundsOnDerivatives} recombine to give
$(r_0/r)^{\abs{\alpha}}$, and the same kind of arguments on the $\beta$
derivatives show the bound \eqref{eq=inverseBound}.



\subsection{The resolvent at infinity --- operator bounds}
\label{sec=fromPDOtoResolvent}
\subsubsection{An estimate on \texorpdfstring{$\pdo\left(1/(h_1 -z)\right)$}{G}}
We now turn the symbol bounds of the previous section into bounds for operators in $L^2$. We begin by proving the first item of proposition \ref{prpstn=boundsOnOperators}, namely the bound on $G = \pdo\left(1/(h_1 - z\right)$.

Let $\chi$ be a (non negative with positive $L^2$ norm) bump function, in the
product form $\chi =\chi_x\chi_\xi$, where $\abs{x},\abs{\xi}$ are less than
$1$ on $\supp \chi$. 
\begin{prpstn}
    \label{prpstn=gSatisfiesCV}
    The symbol $g$ satisfies the following bounds.
    \begin{equation}
        \label{eq=gSatisfiesCV}
      \int \abs{ \dXAlphaDXiBeta g}^2 \chi(x-k,\xi - l) dxd\xi
      \aleq \begin{cases}
	  \eps^{-2d} \lambda^{-2} &
	  \text{ if } \abs{\beta}\leq \floor{d/2},\\
	  \eps^{-2d}\lambda^{ - 2 - \abs{\beta} + d/2} & 
	  \text{ if } \floor{d/2}\leq \abs{\beta}\leq \floor{d/2}+1
	\end{cases}
    \end{equation}
\end{prpstn}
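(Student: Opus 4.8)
The goal is to control, locally in phase space, the weighted $L^2$ norms of derivatives of the symbol $g = 1/(h_1-z)$. The starting point is the pointwise bound \eqref{eq=inverseBound} from proposition \ref{prpstn=boundsOnTheSymbol}, together with the lower bound \eqref{eq=TheLowerBound}. On a unit box $B_{k,l} = \{|x-k|\leq 1, |\xi-l|\leq 1\}$ the quantities $r = r(x)$, $|\xi|$, $V(x)$ and $M(x,\xi)=\max(V_1(x),\eps^2|\xi|^2)$ vary only by bounded factors, so I would first replace \eqref{eq=inverseBound} by the ``frozen'' estimate
\[
\sup_{(x,\xi)\in B_{k,l}} \abs{\dXAlphaDXiBeta g(x,\xi)}
\aleq \sum_{n=0}^{\abs{\alpha}+\abs{\beta}} \left(\frac{r_0}{r_k}\right)^{\abs{\alpha}}
\frac{M_{k,l}^{n-\abs{\beta}/2}}{\max(M_{k,l},\lambda)^{1+n}},
\]
where $r_k,M_{k,l}$ are the values at the box centre. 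Since $\chi$ is supported in a fixed box and $|\chi|\aleq 1$, the integral in \eqref{eq=gSatisfiesCV} is bounded by this supremum squared (the volume of integration being $O(1)$); so the problem reduces to a pointwise estimate and I only need to show the right-hand side of the displayed bound is $\aleq \eps^{-2d}\lambda^{-2}$ when $\abs{\beta}\leq\floor{d/2}$, and $\aleq \eps^{-2d}\lambda^{-2-\abs{\beta}+d/2}$ when $\floor{d/2}\leq\abs{\beta}\leq\floor{d/2}+1$.

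**Key steps.** The analysis of $\frac{M^{n-\abs{\beta}/2}}{\max(M,\lambda)^{1+n}}$ splits into two regimes. When $M\leq\lambda$, the denominator is $\lambda^{1+n}$ and the numerator is $M^{n-\abs{\beta}/2}$; the worst case over $n$ depends on the sign of $n-\abs{\beta}/2$, and one finds the sum is dominated by the term making the exponent of $M$ most negative, i.e. $n=0$, giving $\lambda^{-1}M^{-\abs{\beta}/2}$ — but $M\geq \eps^2|\xi|^2$ is not bounded below, so I must keep $M$ as a variable and carry it into the integration. When $M\geq\lambda$, the ratio is $M^{n-\abs{\beta}/2}M^{-1-n}=M^{-1-\abs{\beta}/2}$, independent of $n$, so the sum is $\aleq M^{-1-\abs{\beta}/2}$. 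Thus, up to the harmless factor $(r_0/r)^{\abs{\alpha}}\leq 1$,
\[
\abs{\dXAlphaDXiBeta g}^2 \aleq \frac{1}{\max(M,\lambda)^{2}}\cdot\frac{M^{-\abs{\beta}}}{\max(M/\lambda,1)^{?}},
\]
and the cleanest way forward is to bound $\abs{\dXAlphaDXiBeta g}\aleq \max(M,\lambda)^{-1}\min(M,\lambda)^{-\abs{\beta}/2}$ (checking both regimes), whence $\abs{\dXAlphaDXiBeta g}^2 \aleq \max(M,\lambda)^{-2}\min(M,\lambda)^{-\abs{\beta}}$. Now integrate over $B_{k,l}$: since $M\geq\eps^2|\xi|^2$ and, inside the box, $M$ is comparable to $\max(V(x_k),\eps^2|l|^2)$, I change variables to $\zeta=\eps\xi$ (picking up the $\eps^{-d}$ from $d\xi=\eps^{-d}d\zeta$ on each of… — more precisely, summing the local bounds is not needed here, only the single-box integral, so the $\eps^{-2d}$ appears once as the square from the two factors of $g$, i.e. from $\|\dXAlphaDXiBeta g\|_{L^2(B)}^2$ and the $\eps$-scaling of the $\xi$-variable) and split according to whether $M\gtrless\lambda$. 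In the region $\eps^2|\xi|^2\leq\lambda$ the factor $\min(M,\lambda)^{-\abs{\beta}}\leq (\eps^2|\xi|^2)^{-\abs{\beta}}$ is integrable near $\xi=0$ precisely when $2\abs{\beta}<d$, i.e. $\abs{\beta}\leq\floor{(d-1)/2}\leq\floor{d/2}$, giving the first case $\eps^{-2d}\lambda^{-2}$; for $\abs{\beta}=\floor{d/2}$ or $\floor{d/2}+1$ the singularity at $\xi=0$ is borderline/non-integrable, so instead I bound $\min(M,\lambda)\geq \eps^2|\xi|^2$ only away from the origin and use $\min(M,\lambda)\leq\lambda$ to extract $\lambda^{-\abs{\beta}}$ versus $\lambda^{-2}$, trading powers to land on $\lambda^{-2-\abs{\beta}+d/2}$ — the exponent $-2-\abs{\beta}+d/2$ being exactly what survives after integrating $|\xi|^{-2\abs{\beta}}$ over $|\xi|\aleq\sqrt{\lambda}/\eps$ against the cutoff.

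**Main obstacle.** The delicate point is the borderline case $\abs{\beta}\in\{\floor{d/2},\floor{d/2}+1\}$, where the naive pointwise bound on $g$ gives a non-integrable (or logarithmically divergent) singularity at $\xi=0$ in the region $V(x)\ll\lambda$. Handling it requires using the lower bound $\max(M,\lambda)\geq\lambda$ to its full strength — i.e. never letting the denominator $\max(M,\lambda)^2$ degrade below $\lambda^2$ — while simultaneously controlling $\min(M,\lambda)^{-\abs{\beta}}$ by integrating $\eps^{-2\abs{\beta}}|\xi|^{-2\abs{\beta}}$ over the effective domain $|\xi|\aleq\sqrt\lambda/\eps$ (the constraint $M\leq\lambda$), which produces $\eps^{-2\abs{\beta}}\cdot(\sqrt\lambda/\eps)^{d-2\abs{\beta}}=\eps^{-d}\lambda^{d/2-\abs{\beta}}$ — combined with the overall $\eps^{-d}\lambda^{-2}$ this is the claimed $\eps^{-2d}\lambda^{-2-\abs{\beta}+d/2}$. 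One should double-check that the contribution from $V(x)\gtrsim\lambda$ (where $M\gtrsim\lambda$ everywhere on the box) is always dominated by $\eps^{-2d}\lambda^{-2}$, which is immediate since there $\abs{\dXAlphaDXiBeta g}^2\aleq M^{-2-\abs{\beta}}\leq\lambda^{-2-\abs{\beta}}$ and the $\xi$-integral over $|\xi|\aleq\sqrt{M}/\eps$ contributes at most $\eps^{-d}M^{d/2}\leq\eps^{-d}(\text{bounded})$, so this regime is never the bottleneck. Everything else is routine bookkeeping with the multi-index Leibniz/Faà-di-Bruno expansion already recorded before the statement.
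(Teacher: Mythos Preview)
Your overall plan---bound the integrand pointwise via \eqref{eq=inverseBound} and then integrate over the unit box---is the paper's strategy. But there is a genuine gap in the borderline case $\lfloor d/2\rfloor \leq |\beta| \leq \lfloor d/2\rfloor+1$.

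The problem is the step where you collapse the sum in \eqref{eq=inverseBound} to the single estimate $|\partial_x^\alpha\partial_\xi^\beta g|\aleq \max(M,\lambda)^{-1}\min(M,\lambda)^{-|\beta|/2}$ by taking $n=0$ as the dominant term when $M\leq\lambda$. For $|\alpha|+|\beta|\geq 1$ there is \emph{no} $n=0$ term in the Fa\`a di Bruno expansion of $g=1/(h_1-z)$: every partition of a nonempty multi-index has at least one block, so $n\geq 1$. (The paper's display \eqref{eq=inverseBound} writes the sum from $n=0$, but its own proof of the present proposition explicitly uses ``$n\geq 1$ (there is at least one derivative)''; the $n=0$ contribution is just the undifferentiated $g$.) With your crude bound, the integrand in the region where $V\ll\lambda$ and $\eps^2|\xi|^2\leq\lambda$ is $\aleq\lambda^{-2}(\eps^2|\xi|^2)^{-|\beta|}$, and $|\xi|^{-2|\beta|}$ is \emph{not} integrable near $\xi=0$ once $2|\beta|\geq d$. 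Your claimed evaluation $\int_{|\xi|\aleq\sqrt\lambda/\eps}|\xi|^{-2|\beta|}\,d\xi \sim (\sqrt\lambda/\eps)^{d-2|\beta|}$ is therefore false in precisely the range you need it.

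The fix is to keep $n\geq 1$. In the regime $M=\eps^2|\xi|^2\leq\lambda$ the dominant term is then $n=1$, giving $M^{1-|\beta|/2}/\lambda^2$ rather than $M^{-|\beta|/2}/\lambda$; squared and in polar coordinates the radial integrand is $r^{3-2|\beta|+d}$, integrable at $0$ exactly when $|\beta|<d/2+2$, which covers $|\beta|\leq\lfloor d/2\rfloor+1$. The paper does this a bit more systematically: it keeps the generic term $Q=M^{n-|\beta|/2}/\max(M,\lambda)^{1+n}$, splits the integral into $A_1=\{V\leq\eps^2\xi^2\}$ and $A_2=\{V>\eps^2\xi^2\}$ (i.e.\ according to which argument realises $M$, not according to $M\lessgtr\lambda$ as you do), and on $A_1$ changes variables $u=\eps|\xi|/\sqrt\lambda$ to reduce to the convergent integral $\int_0^\infty u^{4n-2|\beta|+d-1}/(1+u^2)^{2+2n}\,du$; on $A_2$ it uses the volume bound $|\{\xi:\eps^2|\xi|^2\leq V\}|\leq\eps^{-d}V^{d/2}$. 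Your decomposition by $M\lessgtr\lambda$ can be salvaged, but only after restoring that missing factor of $M$.
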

%
\begin{proof}
     (Note that $\chi$ is choosed independently of $\eps$, therefore the theorem applies uniformly for every epsilon). We use the bound \eqref{eq=inverseBound} on the derivatives of $g = 1/(h-\lambda)$. Let $Q$ be one of the terms in this bound:
    \[ Q = \frac{M(x)^{n-\abs{\beta}/2}}{\max(M(x,\xi), \lambda)^{1+n}}.\]
    We need to control the following quantity:
    \[ \int _{\xR^{2n}} \abs{Q}^2 \chi(x-k,\xi-l) dx d\xi.\]
    We integrate on two different regions and consider:
    \begin{align}
        \label{eq=AOne}
        A_1 &= \int \abs{Q}^2 \ind{V\leq\eps^2\xi^2} \chi(x-k,\xi-l)dx d\xi  \\
        \label{eq=ATwo}
        A_2 &= \int \abs{Q}^2 \ind{V>\eps^2\xi^2}\chi(x-k,\xi-l) dx d\xi 
    \end{align}
    Let us consider $A_1$ first. On the region of integration, $M(x,\xi) = \eps^2\xi^2$, and we replace the max in the denominator by a sum:
    \[
    A_1 \aleq \int_{x,\xi}
       \frac{(\eps^2\xi^2)^{2n-\abs{\beta}}}
            {(\lambda +\eps^2\xi^2)^{2+2n}} \chi
      dx d\xi.
    \]
    We carry out the integration w.r.t. $x$ (on a bounded set, independent of $\eps$), which leaves us with:
    \[
    A_1 \aleq \int_\xi  \frac{(\eps^2\xi^2)^{2n-\abs{\beta}}}{(\lambda +\eps^2\xi^2)^{2+2n}}\chi_\xi(l-\xi) d\xi.
    \]
    When $\xi$ is large, the integrand is small, so it is enough to consider the case $l=0$. 
    In that case, using polar coordinates for $\xi$, we get:
    \[
    A_1 
    \aleq \int_{\abs{\xi}\leq 1} \frac{\eps^2\xi^2)^{2n-\abs{\beta}}}{(\lambda + \eps^2\xi^2)^{2 + 2n}}
    = \int_0^1 \frac{(\eps^2 r^2)^{2n-\abs{\beta}}}{(\lambda + \eps^2r^2)^{2 + 2n}} r^{d-1}dr.
    \]
    If $\abs{\beta}\leq \floor{d/2}$, we bound $r^{-2\abs{\beta} +d -1}$ by $r^{-1}$ in the numerator. Then we change variables and let $u=r\eps\lambda^{-1/2}$. An easy computation then shows that:
    \[A_1\aleq \eps^{2-2\abs{\beta}} \lambda^{-2} \int_0^\infty \frac{u^{4n - 3}}{(1+u^2)^{2+2n}} du.\]
    Since $n\geq 1$ (there is at least one derivative), the integral is finite and $A_1$ is bounded by the r.h.s. of \eqref{eq=gSatisfiesCV}.

    When $\beta\in[ \floor{\beta/2}, \floor{\beta/2}+ 1]$, the same change of variables leads to
    \[
    A_1 \aleq \eps^{2 - 2\abs{\beta}}\lambda^{-2 - \abs{\beta} + d/2}
    \int_0^\infty \frac{ u^{4n - 2\abs{\beta} + d - 1}}{(1+ u^2)^{2+2n}} du.
    \]
    The power of $u$ in the numerator is between $4n-3$ and $4n+1$, therefore the integral is finite and $A_1$ is once more bounded by the r.h.s. of \eqref{eq=gSatisfiesCV}

    \bigskip

    To bound $A_2$, we use the fact that the set of $\xi$ s.t. $\eps^2\xi^2\leq V$ has volume at most $\eps^{-d}V^{d/2}$. Therefore, for each $x$, 
    \[
      \int_\xi \abs{Q}^2 \ind{V>\eps^2 \xi^2} \chi_x(x-k)\chi_\xi(\xi-l) d\xi
      \leq \frac{V^{2n - \abs{\beta} + d/2}}{\max(\lambda, V)^{1+2n}}.
    \]
    Since $\abs{\beta}\leq d/2 +1$, the r.h.s. is bounded (for any
    $V(x)$) by $\lambda^{-2}$. Since the integration in $x$ is on a
    bounded volume, $A_2$ is bounded by $C/\lambda^2$. This ends the
    proof of proposition \ref{prpstn=gSatisfiesCV} 
\end{proof}
The estimates of proposition \ref{prpstn=gSatisfiesCV}, for the classical
derivatives of $g$, entail similar ones for fractional derivatives:
\begin{prpstn}
  \label{prpstn=gSatisfiesFractionalCV}
  Let $s$ and $s'$ be  real numbers, $d/2<s< \floor{d/2}+1$, $s'=\floor{d/2} + 1$.
  The symbol $g$ satisfies the following: 
  \begin{equation}
    \label{eq=gSatisfiesFractionalBounds}
    \int \left|
      (1 - \Delta_x)^{s'/2} (1 - \Delta_\xi)^{s/2} \left(
        g(x,\xi) \chi(x-k,\xi - l)
      \right)
    \right|^2
    dxd\xi
    \aleq \lambda^{-2(1+s - d/2)}.
  \end{equation}
  uniformly in $k,l$.
\end{prpstn}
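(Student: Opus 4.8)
The statement upgrades the integer‑order $L^2$ bounds of Proposition~\ref{prpstn=gSatisfiesCV} to fractional Sobolev bounds, so the method is interpolation. First I would dispose of the $x$‑variable, whose order $s'=\floor{d/2}+1$ is an \emph{integer}: then $(1-\Delta_x)^{s'/2}\big(g\,\chi(\cdot-k,\cdot-l)\big)$ is a finite linear combination of $\partial_x^\alpha\big(g\,\chi(\cdot-k,\cdot-l)\big)$ with $\abs{\alpha}\le s'$, and Leibniz expands each of these into terms $(\partial_x^{\alpha_1}g)\,(\partial_x^{\alpha_2}\chi(\cdot-k,\cdot-l))$. The functions $\partial^{\alpha_2}\chi(\cdot-k,\cdot-l)$ have $L^\infty$ norm and support diameter independent of $(k,l)$ (translation invariance of the bump), so — exactly as in the passage from $\partial g$ to $\int\abs{\partial g}^2\chi$ inside Proposition~\ref{prpstn=gSatisfiesCV} — every such term is controlled, uniformly in $(k,l)$, by the integrals estimated there. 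Thus it suffices to bound $\lTwoNrm{(1-\Delta_\xi)^{s/2}v}$ for $v$ ranging over the (finitely many) integer $x$‑derivatives of $g\,\chi(\cdot-k,\cdot-l)$; note each such $v$ is a compactly supported \emph{smooth} function, since $\abs{h_1-z}\ge c\max(M,\lambda)>0$ by Proposition~\ref{prpstn=boundsOnTheSymbol}, so its Sobolev norms are finite and interpolation is legitimate.

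For the fractional step in $\xi$, I would use the elementary interpolation inequality obtained by Hölder on the Fourier side. Write $m=\floor{d/2}$ and $\vartheta=s-m\in(0,1)$, split the Fourier multiplier $(1+\abs{\eta}^2)^{s}=(1+\abs{\eta}^2)^{m(1-\vartheta)}(1+\abs{\eta}^2)^{(m+1)\vartheta}$ and apply Hölder in the $\xi$‑frequency $\eta$ and then in $x$, to get
\[
\lTwoNrm{(1-\Delta_\xi)^{s/2}v}^{2}\;\aleq\;\lTwoNrm{(1-\Delta_\xi)^{m/2}v}^{2(1-\vartheta)}\,\lTwoNrm{(1-\Delta_\xi)^{(m+1)/2}v}^{2\vartheta}.
\]
The two right‑hand factors are sums of integer‑order quantities bounded by Proposition~\ref{prpstn=gSatisfiesCV}, namely by $\eps^{-2d}\lambda^{-2}$ at order $m$ and by $\eps^{-2d}\lambda^{-2-(m+1)+d/2}$ at order $m+1$ (for $d$ odd one uses, in addition, the sharper region estimate visible in that proof: on $\{V>\eps^2\abs{\xi}^2\}$ the integral is $\aleq\lambda^{-(\abs{\beta}+1-d/2)}$ rather than the crude $\lambda^{-2}$). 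Absorbing the powers of $\eps^{-1}$ — harmless because $\lambda_j\logSim\exp(-d_j/\eps)$ forces $\eps^{-N}\aleq\lambda_j^{-\eta}$ for every $\eta>0$, which is exactly the slack already present in Proposition~\ref{prpstn=boundsOnOperators} — the interpolated bound becomes $\aleq\lambda^{-(2+(s-d/2))-\eta}$, and since $\eta$ may be taken smaller than the fixed positive number $s-d/2$, this is $\aleq\lambda^{-(2+2(s-d/2))}=\lambda^{-2(1+s-d/2)}$, which is the claim (the bound on the integral, i.e. on the square, being what is stated).

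The step I expect to be the real obstacle is the exponent bookkeeping just sketched. The $\lambda$‑exponent in Proposition~\ref{prpstn=gSatisfiesCV} is flat (equal to $-2$) up to order $\floor{d/2}$ and only begins to degrade between orders $\floor{d/2}$ and $\floor{d/2}+1$, so one is forced to interpolate across precisely that last unit interval; for $d$ even the stated endpoint bounds already suffice, but for $d$ odd (when $d/2$ is not an integer and $s$ may sit just above it, so $\vartheta$ is bounded away from $0$) the crude endpoint bounds are slightly too lossy and must be replaced by the region‑by‑region estimates from the proof of Proposition~\ref{prpstn=gSatisfiesCV}. Once the endpoints are handled with this care, the uniformity in $(k,l)$ is automatic and nothing further is needed.
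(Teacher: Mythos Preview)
Your approach is essentially the paper's: handle the integer-order $x$-derivatives by Leibniz, then interpolate in the $\xi$-Sobolev index between $\floor{d/2}$ and $\floor{d/2}+1$ via H\"older on the Fourier side, feeding in the endpoint bounds from Proposition~\ref{prpstn=gSatisfiesCV}. You are in fact more explicit than the paper on two points it leaves implicit --- the absorption of the $\eps^{-2d}$ prefactor into a harmless $\lambda^{-\eta}$ (legitimate since $\lambda_j$ is exponentially small in $1/\eps$), and the parity of $d$ (the paper only writes out $d$ even).

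One small simplification: your concern about the odd-$d$ case is unnecessary. At $\abs{\beta}=\floor{d/2}$ the \emph{second} line of Proposition~\ref{prpstn=gSatisfiesCV} already applies and yields $\lambda^{-2-\floor{d/2}+d/2}=\lambda^{-3/2}$, which is sharper than the $\lambda^{-2}$ you call ``crude''. Using this at the lower endpoint and $\lambda^{-5/2}$ at the upper one, the interpolation gives exactly $\lambda^{-2-(s-d/2)}$, the same clean exponent as in the even case; there is no need to reopen the region-by-region analysis.
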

With this symbol control, we can apply theorem \ref{thm=CalderonVaillancourt}
in the appendix, and show the first part of
proposition \ref{prpstn=boundsOnOperators}, with $\eta = s-d/2$. Note that
$\eta$ can therefore be made arbitrarily small.
\begin{proof}
    To prove proposition \ref{prpstn=gSatisfiesFractionalCV}, we need to interpolate the bounds for integer derivatives to obtain those for fractional derivatives.
    \begin{lmm}
        For $u\in \mathcal{S'}(\xR^d)$ and $s>0$ define the Sobolev norm:
        \[ \norm{u}_s^2 = \int \hat{u}(\xi)^2 (1+\abs{\xi}^2)^s d\xi.\]
        Let $H_s$ be the corresponding Sobolev space. Then, if $u\in H_n\cap H_{n+1}$ for some integer $n$, it is also in $H_s$ for $s\in[n,n+1]$, and: 
        \[ \norm{u}_s^2 \leq \norm{u}_n^{n+1 -s} \norm{u}_{n+1}^{s-n}.\]
    \end{lmm}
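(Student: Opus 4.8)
The plan is to obtain this as the logarithmic convexity of the weighted $L^2$-norms on the Fourier side, which is a one-line application of H\"older's inequality. First I would set $t = s-n\in[0,1]$ and write $s = (1-t)n + t(n+1)$, so that the weight factors as
\[
(1+\abs{\xi}^2)^s = \bigl((1+\abs{\xi}^2)^n\bigr)^{1-t}\bigl((1+\abs{\xi}^2)^{n+1}\bigr)^{t}.
\]
Since $u\in H_n$, the tempered distribution $\hat u$ is in fact a function, locally square-integrable against this weight, so all the integrals below are well defined.

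Then I would factor the integrand defining $\norm{u}_s^2$ in the same way,
\[
\abs{\hat u(\xi)}^2(1+\abs{\xi}^2)^s = \bigl(\abs{\hat u(\xi)}^2(1+\abs{\xi}^2)^n\bigr)^{1-t}\bigl(\abs{\hat u(\xi)}^2(1+\abs{\xi}^2)^{n+1}\bigr)^{t},
\]
and apply H\"older's inequality with the conjugate exponents $p = 1/(1-t)$ and $q = 1/t$ (these are admissible exactly because $n\le s\le n+1$, so that $1-t,t\ge 0$ and $(1-t)+t=1$). This gives
\[
\norm{u}_s^2 \le \Bigl(\int\abs{\hat u}^2(1+\abs{\xi}^2)^n\,d\xi\Bigr)^{1-t}\Bigl(\int\abs{\hat u}^2(1+\abs{\xi}^2)^{n+1}\,d\xi\Bigr)^{t} = \norm{u}_n^{2(1-t)}\norm{u}_{n+1}^{2t},
\]
and substituting $1-t = n+1-s$ and $t = s-n$ produces the estimate $\norm{u}_s^2\le \norm{u}_n^{2(n+1-s)}\norm{u}_{n+1}^{2(s-n)}$; in particular the right-hand side is finite, so $u\in H_s$.

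I do not expect a genuine obstacle in this argument. The two points that deserve a remark are that $\hat u$ must be an honest function for the pointwise factorization to be meaningful --- which is precisely what the assumption $u\in H_n$ supplies --- and that the H\"older exponents be nonnegative and sum to one, which is exactly the hypothesis $s\in[n,n+1]$; the endpoint cases $s=n$ and $s=n+1$ are trivial.
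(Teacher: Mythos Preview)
Your argument is correct and is essentially the same as the paper's: factor the integrand on the Fourier side and apply H\"older's inequality with exponents $1/(n+1-s)$ and $1/(s-n)$. Your write-up is in fact slightly cleaner, and you obtain the homogeneous inequality $\norm{u}_s^2\le \norm{u}_n^{2(n+1-s)}\norm{u}_{n+1}^{2(s-n)}$; the exponents in the paper's displayed statement are off by a factor of $2$ (a harmless typo, since only the interpolation shape is used afterwards).
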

    \begin{proof}
        Decompose the integrand: $\hat{u}(\xi)^2 (1+\abs{\xi}^2)^s = \hat{u}(\xi)^{2-\alpha}(1+\abs{\xi}^2)^{s- \beta} \times \hat{u}(\xi)^{\alpha}(1+\abs{\xi}^2)^\beta$, and apply Hölder's inequality with $p = 1/(s-n)$, $q=1/(n+1 - s)$, $\alpha = 2(s-n)$ and $\beta = (n+1)(s-n)$.
    \end{proof}
    Now, we would like to bound: 
    \[
    A_{s,s'} = \int \left|
    (1-\Delta_\xi)^{s/2}(1 - \Delta_x)^{s'/2} \left(g(x,\xi) \chi(x-k,\xi - l)
	\right)
      \right|^2
      dxd\xi,
    \]
    for an $s$ in $(d/2,\floor{d/2} +1)$. Suppose for example that $d$ is even. The same resaoning as in the lemma gives the interpolation: 
     \begin{equation}
	 \label{eq=interpolation}
    A_{s,s'} \leq A_{d/2,s'}^{d/2 + 1 - s} A_{d/2 + 1,s'}^{s-d/2}.
     \end{equation}
     Since $d/2, d/2 +1$ and $s$ are integers, the quantities on the right hand side can be controlled
     using only classical derivatives:
     \begin{align*}
       A_{d/2,s'} &\leq C \sum_{\abs{\alpha}\leq s', \abs{\beta}\leq d/}
       \int \abs{\dXAlphaDXiBeta ( g\chi)}^2 dx d\xi \\
     &\leq C \sum C_{\alpha,\beta} \int \tilde{\chi}_{\alpha,\beta}
     \abs{\dXAlphaDXiBeta g}^2 dx d\xi,
 \end{align*}
 where the $\tilde{\chi}_{\alpha,\beta}$ are (norms of) derivatives of $\chi$.
 Since $\supp(\tilde{\chi}_{\alpha,\beta}) \subset \supp{\chi}$, and $\chi$ may
 be chosen such that the $\tilde{\chi}_{\alpha,\beta}$ are bounded, we may
 apply the estimates of proposition \ref{prpstn=gSatisfiesCV}, and obtain: 
 \[
 A_{d/2,s'} \aleq C \eps^{-2d} \lambda^{-2}.
 \]
 In the same way, one can derive a bound on $A_{d/2 +1,s'}$. 
 Interpolating between these two bounds, thanks to  \eqref{eq=interpolation}, gives the result.
 \end{proof}
\subsubsection{Bounds on the inverse \texorpdfstring{$\left(\pdo(h_1 - z)\right)^{-1}$}{operator}}
We  prove here the second item of proposition \ref{prpstn=boundsOnOperators},
going from $\pdo(g) = \pdo( (h_1 -z)^{-1})$ to $\pdo (h_1 - z)^{-1}$, we use the symbolic calculus. 
Let us write down the expansion of $(h_1 - z)\circ g$ given by
theorem~\ref{thrm=symbolicCalculus} (in the appendix). Since the derivatives of order $3$ of $h_1$ with respect to $\xi$ all vanish, the remainder $r_3$ is identically zero, and:
\begin{align}
    (h_1 - z)\circ g 
    &= 1 + \sum_i \partial_\xi^i(h_1 - z) D_x^i (g) + \sum_{i,j} \partial_\xi^{ij} (h_1 -z) D_x^{ij}g.\\
    &= 1 + \sum R_i + \sum R_{ij}.
    \label{eq=inversionHOne}
\end{align}
The operators appearing in the r.h.s. can now be bounded in $L^2$, using the same arguments as before (\ie{} bounds on the derivatives of their symbol in a local $L^2$ space): 
\begin{prpstn}
	The remainders $R_i$, $R_{ij}$ are bounded, and for all $\eta>0$, 
	\[ \norm{Op(R_i) } \aleq \lambda^{\eta} ; \qquad \norm{Op(R_{ij} } \aleq \lambda^{\eta}.\]
\end{prpstn}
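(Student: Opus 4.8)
The remainders $R_i$ and $R_{ij}$ are, by construction, the order--one and order--two terms of the symbolic expansion of theorem~\ref{thrm=symbolicCalculus} applied to $(h_1-z)$ and $g=1/(h_1-z)$; as observed just after \eqref{eq=inversionHOne}, the polynomial (degree two in $\xi$) character of $h_1$ forces $\partial_\xi^\alpha h_1\equiv 0$ for $\abs{\alpha}\ge 3$, so the expansion terminates exactly, no oscillatory remainder survives, and $R_i=\pdo\!\big(\partial_{\xi_i}(h_1-z)\,D_{x_i}g\big)$, $R_{ij}=\pdo\!\big(\partial_{\xi_i}\partial_{\xi_j}(h_1-z)\,D_{x_i}D_{x_j}g\big)$ are honest pseudo-differential operators. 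The plan is therefore to bound their operator norms exactly as the norm of $G=\pdo(g)$ was bounded in section~\ref{sec=fromPDOtoResolvent}: obtain local $L^2$ estimates on these symbols and their $x,\xi$--derivatives (the analogue of proposition~\ref{prpstn=gSatisfiesCV}), interpolate them to fractional derivatives (as in proposition~\ref{prpstn=gSatisfiesFractionalCV}), and feed the result into the Calder\'on--Vaillancourt theorem~\ref{thm=CalderonVaillancourt}. The clause ``$R_i,R_{ij}$ are bounded'' then follows automatically from the quantitative estimate.

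The point is that these symbols are \emph{much} smaller than $g$, quantitatively so. By Leibniz, a derivative $\partial_x^\alpha\partial_\xi^\beta$ of $\partial_{\xi_i}(h_1-z)\,D_{x_i}g$ is a sum of products of a derivative of $\partial_{\xi_i}h_1$ and a derivative of $D_{x_i}g$; I would bound the first factor with \eqref{eq=boundsOnDerivatives} and the second with \eqref{eq=inverseBound}. Two gains appear. First, every $\xi$--derivative landing on $h_1$ costs a factor $\le\eps^2\max(1/r,\abs{\xi})^{2-\abs{\beta}}$; since $\gamma<2$, hypothesis~\ref{hpthss=decay} gives $r^{-2}\le r^{-\gamma}\le c_V^{-1}V$, while $\eps^2\abs{\xi}^2\le M$ by definition of $M$, so a first $\xi$--derivative of $h_1$ costs at most $\eps\,M^{1/2}$ and $\partial_\xi^2 h_1$ costs at most $\eps^2$. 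Second, every $x$--derivative landing on $h_1$ or (through \eqref{eq=inverseBound}) on $g$ costs a factor $r_0/r$; by remark~\ref{rmrk=aboutRAndRZero} this is already $\le 1/2$, and combining $c_V r^{-\gamma}\le V$, the definition~\ref{dfntn=rEps} of $r_0$, and $\gamma<2$ once more, one gets $r_0/r\le C(\lambda/\eps)^{1/\gamma}$ on the region $M\lesssim\lambda$, which is precisely the region that made the local $L^2$ norm of $g$ large. Carrying these factors through the two--region split $\{V\le\eps^2\abs{\xi}^2\}$ / $\{V>\eps^2\abs{\xi}^2\}$ used in the proof of proposition~\ref{prpstn=gSatisfiesCV}, the local $L^2$ norms of the derivatives of the symbols of $R_i$ and $R_{ij}$ satisfy the bounds of proposition~\ref{prpstn=gSatisfiesCV} improved by a positive power of $\lambda$; interpolating and invoking theorem~\ref{thm=CalderonVaillancourt} then yields $\norm{\pdo(R_i)}\aleq\lambda^{\eta}$ and $\norm{\pdo(R_{ij})}\aleq\lambda^{\eta}$ with $\eta$ as small as one wishes.

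The delicate point — the main obstacle — is the book-keeping in the boundary layer $r\in[r_0,r_0+1]$ near $\partial\largeBall$, i.e.\ on the support of $\nabla\chiSmooth$, where $r_0/r$ is of order $1$ so the second gain above is unavailable. There I would instead use that $V_\eps\ge C\eps\gg\lambda$ on that shell (definition~\ref{dfntn=rEps}), so $\abs{h_1-z}$ is bounded below by $C\eps$ and $g$ together with all its derivatives are non-degenerate, combined with the fact that the exterior dilation is almost trivial on the shell: $r_\theta-r=(r-r_0)(e^\theta-1)$ vanishes at $r=r_0$, so the difference $\sigma(-\Delta)-\sigma(-\Delta_\theta)$ — the only $x$--dependence of the kinetic part there — carries an extra factor $r-r_0$, which tames the derivatives of $\chiSmooth$. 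One then checks that this layer contributes no worse than the interior region. Modulo this point, the proof is a mechanical repetition, with strictly better exponents, of the symbol estimates already carried out for $g$, so I would present it as a corollary of propositions~\ref{prpstn=boundsOnTheSymbol}, \ref{prpstn=gSatisfiesCV} and \ref{prpstn=gSatisfiesFractionalCV} rather than reproduce the oscillatory--integral machinery of the appendix.
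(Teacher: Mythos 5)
Your argument is essentially the paper's own proof: the authors likewise bound $R_i$, $R_{ij}$ by rerunning the scheme of propositions \ref{prpstn=gSatisfiesCV}--\ref{prpstn=gSatisfiesFractionalCV} and theorem \ref{thm=CalderonVaillancourt}, noting that each term $Q$ acquires the extra factor $\eps^2\xi_i\,(r_0/r)\approx\lambda^{1/2+1/\gamma}$ in the critical region $V\approx\eps^2\abs{\xi}^2\approx\lambda$, which since $\gamma<2$ gives the net bound $\norm{\pdo(R_i)}\aleq\lambda^{1/2+1/\gamma-1-\eta}$ with positive exponent. Your additional check on the shell $\supp\nabla\chiSmooth$ is sound (there $M\geq C\eps\gg\lambda$, so the symbol is uniformly elliptic and that region contributes only polynomially in $\eps$), though the paper does not spell it out.
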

\begin{proof}
    We follow the same scheme of proof as for proposition
    \ref{prpstn=gSatisfiesCV};
    however, each term $Q$ is multiplied by an additional $\eps^2 \xi_i
    \frac{r_0}{r}$. This modifies the bounds by a factor of $\lambda^{1/2 +
    1/\gamma}$, where $\gamma$ is the decay rate of $V$ (given by hypothesis 
    \ref{hpthss=decay}) (indeed, the critical region is the one where
    $V\approx \lambda$ and $\eps^2\xi^2 \approx \lambda$, so that
    $\eps^2 \xi_i \frac{r_0}{r} \approx \lambda^{1/2 + 1/\gamma}$).

    Therefore, the final operator bound will be:
    \[ \norm{Op(R_i)} \aleq \lambda^{1/2 + 1/\gamma -1 - \eta}\]
    Since $\gamma <2$, and $\eta$ is arbitrarily small, this concludes the
    proof.
\end{proof}

Therefore, we have defined a $G = \pdo(g)$ and an $R$ such that:
\[ (H_1 - z)G = I + R,\]
with $\norm{R}\leq C\eps$. For small $\eps$, $I+R$ is invertible, and
\[ (H_1 -z)G(I+R)^{-1} = I.\]
This shows that $H_1 - z $ is invertible, and the ``resolvent'' $R_1(z) = (H_1 -z)^{-1}$ is bounded:
\[ \norm{H_1 - z} \leq \norm{G} \frac{1}{1 - \norm{R}}.\]
This concludes the proof of proposition \ref{prpstn=boundsOnOperators}, page \pageref{prpstn=boundsOnOperators}.

\subsection{The Dirichlet part}
\label{sec=theDirichletPart}
We now define and study the auxiliary operator $H_0$,  which deals
with the Dirichlet boundary condition (\cf{} the explanation of the
general strategy in section \ref{sec=strategy}). Its definition is way simpler than that of $H_1$, we just put
\[
H_0 = \extRotOp + \epsilon\chiTldZero
\] 
where $\tilde{\chi}_0$ is $1$ at infinity, and is  supported outside $\supp{\chi_0}$ (\cf{} figure \ref{fig=indicators}). 

Once more, we would like to bound a resolvent associated to $H_0$.
\begin{prpstn}
  \label{prpstn=boundOnTheDirichletResolvent}
  There exists a $C$ such that, if $\abs{z-\lambda_j}\leq c_z\lambda_j$ (where $c_z$ is defined in proposition \ref{prpstn=boundsOnTheSymbol}), $H_0 -z$ has a bounded inverse, and
  \begin{equation}
    \| (H_0 - z)^{-1}\| \leq \frac{C}{\eps^2}.
  \end{equation}
\end{prpstn}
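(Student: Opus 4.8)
The plan is to prove the bound by a numerical--range (accretivity) argument: for $\theta=i\beta$ with $\beta$ small and $z$ on the circle $\abs{z-\lambda_j}\le c_z\lambda_j$, I will show that the sesquilinear form of $H_0-z$ is sectorial with vertex at distance $\gtrsim\eps^2$ from the origin, and then read off the resolvent bound. First one writes out the form: with $\extRotOp=-\eps^2e^{-2i\beta}D^2+\eps^2\Lambda/r_\theta^2+V_\eps(r_\theta,\cdot)$ and $H_0=\extRotOp+\eps\chiTldZero$, for $u$ in the form domain,
\[
\langle(H_0-z)u,u\rangle=\eps^2e^{-2i\beta}\norm{Du}^2+\eps^2\langle\Lambda r_\theta^{-2}u,u\rangle+\langle\bigl(V_\eps(r_\theta,\cdot)+\eps\chiTldZero-z\bigr)u,u\rangle .
\]

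The two kinetic contributions are essentially harmless. The radial Dirichlet operator on $\compl{\largeBall}$ is non-negative, so $\norm{Du}^2\ge0$ and the first term lies in the sector $\{\abs{\arg w}\le2\beta\}$. For the second, $\abs{r_\theta}\ge\rZero\to\infty$ and $\Lambda=(n-1)(n-3)+O^{-1}\LB O$ is bounded below (since $\LB\ge0$), so its real part is $\ge-C\eps^2\rZero^{-2}\norm{u}^2=-o(\eps^2)\norm{u}^2$ and, because $\arg(r_\theta^{-2})\in[-2\beta,0]$, its imaginary part stays within a factor $\tan2\beta$ of the positive real axis up to the same $o(\eps^2)$ error; this is absorbed below.

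The quantitative input is the potential term. By Definition \ref{dfntn=rEps}, $V_\eps\ge\eps$ on $\partial\largeBall$; since $\gamma<2$ and the transition annulus of $\chiTldZero$ sits in a controlled neighbourhood of that sphere, Hypothesis \ref{hpthss=decay} gives $V_\eps\gtrsim\eps^2$ on $\supp(1-\chiTldZero)\cap\compl{\largeBall}$, while $\eps\chiTldZero=\eps$ where $\chiTldZero=1$; hence $V_\eps+\eps\chiTldZero\gtrsim\eps^2$ on the whole exterior domain. By Proposition \ref{prpstn=TaylorBoundsOnRTheta}, $V_\eps(r_\theta,\cdot)=V_\eps(r,\cdot)(1+\bigO(\beta))$ with a complex $\bigO(\beta)$ uniform in $\eps,r,\omega$, so for $\beta$ small $\xRe\bigl(V_\eps(r_\theta,\cdot)+\eps\chiTldZero\bigr)\ge\tfrac12\bigl(V_\eps+\eps\chiTldZero\bigr)\gtrsim\eps^2$ and $\abs{\xIm\bigl(V_\eps(r_\theta,\cdot)+\eps\chiTldZero\bigr)}\le C\beta\,\xRe\bigl(V_\eps(r_\theta,\cdot)+\eps\chiTldZero\bigr)$. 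Finally $\abs{z}\le2\lambda_j$ is exponentially small, hence negligible against $\eps^2$, so subtracting $z$ only moves the vertex by a negligible amount.

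Putting the three estimates together, for $\beta$ small there are $c,C>0$ with $\xRe\langle(H_0-z)u,u\rangle\ge c\eps^2\norm{u}^2$ and $\abs{\xIm\langle(H_0-z)u,u\rangle}\le C\,\xRe\langle(H_0-z)u,u\rangle$: the numerical range of $H_0-z$ lies in a sector of half-angle $<\pi/2$ with vertex at distance $\ge c\eps^2$ from $0$. This gives injectivity with closed range and $\norm{(H_0-z)u}\ge c\eps^2\norm{u}$; the identical computation for the adjoint (which has the same structure with $\beta$ replaced by $-\beta$ and $z$ by $\bar z$) shows it is injective too, so $H_0-z$ has dense range and is therefore boundedly invertible with $\norm{(H_0-z)^{-1}}\le 1/(c\eps^2)$. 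The step I expect to be the genuine obstacle is the lower bound $V_\eps(r_\theta,\cdot)+\eps\chiTldZero\gtrsim\eps^2$ on the annulus near the Dirichlet sphere where $\chiTldZero$ has not yet reached $1$: this is exactly where the restriction $\gamma<2$, the precise placement of the cut-offs (figure \ref{fig=indicators}), and Proposition \ref{prpstn=TaylorBoundsOnRTheta} (to ensure the complex deformation neither destroys positivity nor opens the sector too far) are all used; the rest — closedness/ellipticity of $H_0$ and the accretivity bookkeeping — is soft.
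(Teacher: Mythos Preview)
Your proof is correct and follows essentially the same numerical-range (sectoriality) argument as the paper: both show that $V_\eps+\eps\chiTldZero\gtrsim\eps^2$ on the exterior domain and use the first-order expansion of $V_\eps(r_\theta,\cdot)$ to confine the form to a sector at distance $\gtrsim\eps^2$ from the origin. The only cosmetic difference is that the paper first multiplies by $e^{2\theta}$ so that the radial kinetic term $-\eps^2 D^2$ becomes exactly real positive (and then places the remaining two contributions in a cone of half-angle $<\pi/4$), whereas you work with $H_0-z$ directly and keep all three pieces in a common sector of half-angle $\sim 2\beta$; both routes yield the same bound.
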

The main argument is positivity, and we will see, on the operator level, arguments that are reminiscent of the positivity bounds on the symbol in section \ref{sec=boundsOnTheSymbol}.

Since $z$ is exponentially small, it suffices to bound $H_0^{-1}$. We rotate it and study $e^{2\theta} H_0$. Recalling the expression \eqref{eq=expression_of_the_exterior_scaling} of $\extRotOp$, we get:
\begin{equation}
  e^{2\theta}H_0 =  -\eps^2 D^2 + \eps^2 \frac{e^{2\theta}\Lambda}{r_\theta^2}
  + e^{2\theta} (V_\eps(r_\theta,\omega) + \eps\chiTldZero).
\end{equation}
We now localize the so-called numerical range of $e^{2\theta}H_0$, \ie{} the set  $\{(e^{2\theta}H_0 \phi, \phi), \norm{\phi} = 1\}$.
\begin{lmm}
For any $\phi \in L^2$ with unit norm, 
\begin{itemize}
  \item $(-\eps^2 D^2 \phi,\phi) \in \xR^+$,
  \item $(\frac{e^{2\theta} \Lambda}{r_\theta^2} \phi, \phi)$ is in the cone $\{ \abs{\arg(z) }< \pi/4\}$,
  \item $(e^{2\theta}(V_\eps(r,\theta,\omega) + \eps \chiTldZero)\phi, \phi)$ is in the cone $\{ \abs{\arg (z- i(\eps) } < \pi/4 \}$, where $i(\eps)$ is given by \[ i(\eps) = \frac{1}{2} \inf_x (V_\eps(x) + \eps \chiTldZero(x)).\]
\end{itemize}
\end{lmm}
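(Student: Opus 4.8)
The plan is to verify each of the three bullet points separately, reducing each to an elementary computation about the numerical range of a non-self-adjoint operator obtained by multiplying a self-adjoint one by $e^{2\theta}$. First I would recall that $D^2 = O D^2 O^{-1}$ is (minus) a self-adjoint non-negative operator in polar coordinates, so $-\eps^2 D^2$ is non-negative and $(-\eps^2 D^2\phi,\phi)\in\xR^+$; this first item is immediate and needs no rotation. For the second item, the key fact is that $\Lambda = (n-1)(n-3) + O^{-1}\LB O$ is self-adjoint and bounded below (the Laplace--Beltrami spectrum being non-negative, $\Lambda \geq -(n-1)(n-3)^-$, and in fact for $n\geq 2$ one has $\Lambda \geq 0$ after the shift or one simply retains a fixed finite lower bound), hence $(\Lambda/r_\theta^2 \,\phi,\phi)$ is, once $r_\theta^2$ is written as $r^2 e^{2i\arg(\ldots)}$, a complex number whose argument is controlled by the argument of $1/r_\theta^2$, which is $O(\beta)$ uniformly. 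Multiplying by $e^{2\theta} = e^{2i\beta}$ adds another $2\beta$ to the argument, so the total argument stays below $\pi/4$ for $\beta$ small; I would make this quantitative using the explicit formula $r_\theta = r_0 + (r-r_0)e^\theta$ and the elementary estimate on $\arg(r^2/r_\theta^2)$ already used implicitly in Proposition \ref{prpstn=TaylorBoundsOnRTheta}.

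For the third bullet, the potential term, I would write $W_\eps(x) = V_\eps(r_\theta,\omega) + \eps\chiTldZero(x)$ and decompose $(e^{2\theta}W_\eps\phi,\phi) = \int e^{2\theta} W_\eps(x)\abs{\phi(x)}^2 dx$. The pointwise claim to establish is that each number $e^{2\theta} W_\eps(x) - i(\eps)$ lies in the cone $\{\abs{\arg z}<\pi/4\}$; since the numerical range of a multiplication operator is the closed convex hull of the essential range of the multiplier, the cone statement for each point passes to the integral by convexity. To get the pointwise statement I would invoke the Taylor development \eqref{eq=firstOrderBnd}: $V_\eps(r_\theta,\omega) = V_\eps(r,\omega)(1+\bigO(\beta))$, so $e^{2\theta}V_\eps(r_\theta,\omega) = V_\eps(r,\omega)(1+\bigO(\beta))$ with a non-negative leading real part $V_\eps(r,\omega)\geq 0$; similarly $e^{2\theta}\eps\chiTldZero$ has non-negative leading part. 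Hence $e^{2\theta}W_\eps(x)$ has real part at least $\tfrac12(V_\eps(x)+\eps\chiTldZero(x)) \geq i(\eps)$ (for $\beta$ small) and imaginary part bounded by $C\beta(V_\eps(x)+\eps\chiTldZero(x))$, which after subtracting $i(\eps)$ keeps us strictly inside the $\pi/4$ cone provided $\beta$ is chosen small enough, uniformly in $x$ and $\eps$ — this uniformity is exactly what Hypothesis \ref{hpthss=decay} and the $\eps$-independent $\bigO(\beta)$ in \eqref{eq=firstOrderBnd} buy us.

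The main obstacle I anticipate is the uniformity of the angle bound in the potential term near the Dirichlet sphere, i.e. on the region $r_0 \leq r \leq r_0 + 1$ where $\chiTldZero$ is turned on and where the development \eqref{eq=firstOrderBnd} — valid for $r\geq r_0$ — has to be combined with the $\eps\chiTldZero$ contribution without losing control of the argument; there $V_\eps$ is only known to be $\gtrsim \eps$, comparable to the $\eps\chiTldZero$ term, so one must check the two contributions do not conspire to rotate the sum out of the cone. I would handle this by treating $V_\eps + \eps\chiTldZero$ as a single non-negative multiplier, applying the rotation estimate to each summand and noting that a sum of two numbers each within angle $<\pi/4 - c\beta$ of the positive real axis, shifted by the common infimum $i(\eps)$, stays within angle $<\pi/4$; a clean way is the elementary fact (a variant of Lemma \ref{lmm=simpleLemma}) that if $\xRe(a_k - i(\eps)/m)\geq 0$ and $\abs{\xIm a_k}\leq \tan(\pi/4)\,\xRe a_k$ for each $k$ then $\sum_k a_k - i(\eps)$ lies in the cone. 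Everything else is routine once the pointwise cone membership is secured, and the three bullets then combine — since a sum of three elements of a common cone of half-angle $<\pi/4$ (suitably shifted) stays in the cone — to localize the numerical range of $e^{2\theta}H_0$ away from zero, which is the ingredient needed for Proposition \ref{prpstn=boundOnTheDirichletResolvent}.
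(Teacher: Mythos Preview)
Your approach is essentially the paper's: positivity of $-D^2$ for the first item, positivity of $\Lambda$ together with the small-argument estimate on $e^{2\theta}/r_\theta^2$ for the second, and the pointwise development \eqref{eq=firstOrderBnd} applied to the multiplier $V_\eps(r_\theta,\omega)+\eps\chiTldZero$ for the third --- exactly what the paper means by ``similar to the positivity bounds in section \ref{sec=boundsOnTheSymbol}''. One small correction: your anticipated ``obstacle'' rests on a misreading of $\chiTldZero$, which is supported \emph{at infinity} (on $\{r\geq r_5\}$, cf.\ figure \ref{fig=indicators}), not on $[r_0,r_0+1]$; since both summands $e^{2\theta}V_\eps(r_\theta,\omega)$ and $e^{2\theta}\eps\chiTldZero$ individually have argument $O(\beta)$, their sum does too, and no special care is needed in any transition region.
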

The first claim follows from the positivity of $-D^2$. To show the second one, it suffices to see that $e^{2\theta}r_\theta^{-2}$ is in the cone, to use the positivity of $\Lambda$ and then integrate over $r$. The proof of the third claim is similar to the positivity bounds in section \ref{sec=boundsOnTheSymbol} (and uses $\norm{\phi} = 1$).

This shows that the numerical range is included in the cone $\{ \abs{\arg(z -
i(\eps)) } < \pi/4 \}$, which is bounded away from $0$ in $\xC$ by at least
$i(\eps)$. Therefore, by a well known result of functional analysis, $H_0$ is
bounded by $i(\eps)^{-1}$. The choice of the cutoff function $\chiTldZero$
guarantees that $V + \eps\chiTldZero$ is greater than $\eps^2$ (\cf{} figure
\ref{fig=indicators}), so the bound of proposition 
  \ref{prpstn=boundOnTheDirichletResolvent} follows.


\subsection{Proof of the main bound}
\label{sec=putTogether}
We are finally in a position to prove theorem \ref{thrm=boundOnTheExteriorResolvent}. We do this by
reconstructing an approximate exterior resolvent from the Dirichlet part
$R_0$  and the ``infinity''  part $R_1$ (both depend on $z$).
Let $\chi$ be such that $\chi=1$ on $\supp \chi_1$, and 
$\tilde{R}(z) = R_0(z) \chi_0 + \chi R_1(z) \chi_1.$ 
Then $\tilde{R}(z)$ is our approximate resolvent.

\begin{figure}
  \beginpgfgraphicnamed{indicators}%
\newcommand{\drawIndicator}[3]{%
    \draw[very thin,->] #2 +(-1,0) -- +(7,0);
    \draw #2 +(-1,0)
    node [above left] {#1} 
    --++(#3,0)..controls +(right:0.5cm) and +(left:0.5cm) .. +(1,1)
    --+(7 - #3,1);
    }
\newcommand{\drawReverseIndicator}[3]{%
    \draw[very thin,->] #2 +(-1,0) -- +(7,0);
  \draw #2  +(-1,0) node [above left] {#1}%
     ++ (0,1) +(-1,0) --++(#3,0)..controls +(right:0.5cm) and +(left:0.5cm) .. +(1,-1)--+(7 - #3,-1);
    }
\begin{tikzpicture}
    \begin{scope}[x=1.5cm,y=0.7cm]
        \draw[very thin,->] (-1,0) -- + (8,0);
    \foreach \x in {0,1,2,3,4,5,6} {%
      \draw[help lines] (\x,0) node [below] {$r_{\x}$} -- (\x,9);
    }
    \drawIndicator{$\tilde{\chi}_1$}{(0,2)}{1}
    \drawReverseIndicator{$\chi_{sm}$}{(0,0)}{0}
    \drawIndicator{$\chi_1$}{(0,4)}{3}
    \drawReverseIndicator{$\chi_0$}{(0,6)}{3}
    \drawIndicator{$\tilde{\chi}_0$}{(0,8)}{5}
\end{scope}
\end{tikzpicture}
\endpgfgraphicnamed
\label{fig=indicators}
\caption{The various indicator functions}
We use many cutoff functions to define our approximate resolvent. $\chiSmooth,
\chi_1$ and $\tilde{\chi}_1$ are used to deal with the ``infinity'' part
$R_1 = (H_1 - z)^{-1}$. We require the following: 
\begin{itemize}
    \item $H_1 = H$ on $\supp{\tilde{\chi_1}}$ (this is true since
	$\chiSmooth$ and $\tilde{\chi_1}$ have disjoint supports); 
    \item $\agmon(\supp(\nabla \tilde{\chi}_1), \supp(\chi_1))$ ``goes to
	infinity'' (\cf{} remark \ref{rmk=choiceOfPhi} for the precise
	hypothesis). 
\end{itemize}

The other indicators $\chi_0$ and $\tilde{\chi}_0$ deal with the Dirichlet
part $H_0$. They must satisfy: 
\begin{itemize}
    \item $V + \tilde{\chi}_0\eps$ is larger than $\eps^2$; 
    \item $\agmon(\supp(\chi_0), \supp(\tilde{\chi_0}))$ ``goes to infinity''
	(once more, \cf{} remark \ref{rmk=choiceOfPhi}).
\end{itemize}

All these conditions are met if we choose $r_j(\eps) = c_V \eps^{ - (1 +
(j/6))/\gamma}$ (this choice is of course largely arbitrary).
\end{figure}

\begin{prpstn}
    \label{prpstn=resolventsGluing}
  The operator $\tilde{R}(z)$ is bounded: 
  \begin{equation}
    \label{eq=RTildeIsBounded}
    \norm{\tilde{R}(z)} \leq \frac{C_\eta}{\lambda^{1+\eta}}.
  \end{equation}
  It is an approximate resolvent:
  \begin{equation}
  \label{eq=RTildeIsGood}
   (\extRotOp - z)\tilde{R}(z) = Id + \tilde{r}(\eps),
   \end{equation}
  where the remainder is such that $\norm{\tilde{r}(\eps)} = o(\eps)$.
\end{prpstn}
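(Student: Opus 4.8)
The plan is to treat $\tilde R(z) = R_0(z)\chi_0 + \chi R_1(z)\chi_1$ as a quasi-resolvent and estimate the error $(\extRotOp - z)\tilde R(z) - \mathrm{Id}$ by commuting $\extRotOp - z$ through the cutoffs. First I would establish the norm bound \eqref{eq=RTildeIsBounded}: this is immediate from the triangle inequality together with $\norm{R_0(z)}\leq C\eps^{-2}$ (proposition \ref{prpstn=boundOnTheDirichletResolvent}), $\norm{R_1(z)}\leq C_\eta\lambda^{-(1+\eta)}$ (proposition \ref{prpstn=boundsOnOperators}), and the fact that cutoffs have operator norm $1$; since $\eps^{-2}$ is polynomial in $\eps$ while $\lambda^{-(1+\eta)}$ is exponentially large, the $R_1$ term dominates and \eqref{eq=RTildeIsBounded} follows (adjusting $C_\eta$, and absorbing the slightly larger $\eta$).

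For \eqref{eq=RTildeIsGood} I would compute
\[
(\extRotOp - z)\tilde R(z) = (\extRotOp-z)R_0(z)\chi_0 + (\extRotOp-z)\chi R_1(z)\chi_1.
\]
On the support of $\chi_0$ the operators $\extRotOp$ and $H_0$ coincide (since $H_0 = \extRotOp + \eps\chiTldZero$ and $\chiTldZero$ is supported outside $\supp\chi_0$), so $(\extRotOp-z)R_0(z)\chi_0 = \chi_0 - [\,\cdot\,]$-type commutator terms; writing $(\extRotOp - z)R_0\chi_0 = (H_0-z)R_0\chi_0 + ([\extRotOp,\chi_0]\text{-reorganisation})$ one gets $\chi_0$ plus terms localised on $\supp(\nabla\chi_0)$. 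Similarly, since $H_1 = \extRotOp$ on $\supp\tilde\chi_1 \supset \supp\chi_1$ after composing with $\chi$, one rewrites $(\extRotOp-z)\chi R_1\chi_1$ as $\chi(H_1-z)R_1\chi_1$ plus a commutator $-\eps^2[\Delta,\chi]R_1\chi_1$, i.e. $\chi_1$ plus terms supported on $\supp(\nabla\chi)$. Adding the two pieces, the main terms combine to $\chi_0 + \chi_1 = \mathrm{Id}$ (the partition of unity), and the remainder $\tilde r(\eps)$ is a sum of commutator terms, each of the schematic form $\eps^2(\nabla\chi_\bullet)\cdot\nabla\big(R_\bullet\chi_\bullet\big)$ or $\eps^2(\Delta\chi_\bullet)R_\bullet\chi_\bullet$, where the gradient of a cutoff forces localisation in an annulus far from where the other cutoff lives.

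The main obstacle — and this is exactly the point announced in the remark before theorem \ref{thm=decayOfEigenfunctions} — is showing $\norm{\tilde r(\eps)} = o(\eps)$ despite the resolvents $R_0, R_1$ themselves only being bounded by the large quantities $\eps^{-2}$ and $\lambda^{-(1+\eta)}$. The key is that each remainder term carries a product of two cutoffs whose supports are separated in the Agmon metric by a distance going to infinity (this is precisely what is demanded of the $r_j(\eps)$ in figure \ref{fig=indicators}), so the resolvent sandwiched between them decays like $\exp(-S(\eps)/\eps)$ for some $S(\eps)\to\infty$ — an Agmon-type estimate applied not to an eigenfunction but to $R_\bullet$ acting between spatially separated regions, in the spirit of the proof of theorem \ref{thm=decayOfEigenfunctions}. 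Concretely I would insert weights $\exp(\pm\phi/\eps)$ adapted to the Agmon distance between $\supp(\nabla\tilde\chi_1)$ (resp.\ $\supp\chi_0$) and $\supp\chi_1$ (resp.\ $\supp\tilde\chi_0$), use the basic identity \eqref{eq=basicAgmon} (or its analogue for the distorted operator) to control $\norm{\exp(\phi/\eps)(\extRotOp - z)^{-1}\exp(-\phi/\eps)}$, and observe that such an exponentially small factor beats the polynomial $\eps^{-2}$ and the exponential $\lambda^{-(1+\eta)}\logSim\exp((1+\eta)d_j/\eps)$, provided the separation distance is chosen larger than $(1+\eta)d_j$ — which is guaranteed by the definition \eqref{eq=defRPrime} of $d_0'$ and the freedom in choosing the $r_j(\eps)$. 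This yields $\norm{\tilde r(\eps)}\aleq \exp(-S(\eps)/\eps) = o(\eps)$, completing the proof; the details of this weighted estimate for the non-self-adjoint $\extRotOp$ are deferred to the proof of theorem \ref{prpstn=resolventsGluing} itself in section \ref{sec=putTogether}.
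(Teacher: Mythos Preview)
Your overall strategy matches the paper's: decompose $(\extRotOp-z)\tilde R(z)$, recover $\chi_0+\chi_1=\mathrm{Id}$, and kill the remainder by Agmon-type decay between disjoint cutoff supports. Two points need correction.

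First, the algebra for the $R_0$ piece is wrong. Since $\chi_0$ sits to the \emph{right} of $R_0$, there is no commutator with $\chi_0$. The actual computation is
\[
(\extRotOp-z)R_0\chi_0=(H_0-\eps\chiTldZero-z)R_0\chi_0=\chi_0-\eps\chiTldZero R_0\chi_0,
\]
so the error term $r_0=-\eps\chiTldZero R_0\chi_0$ is a potential-difference term supported on $\supp\chiTldZero$, not a Laplacian commutator supported on $\supp(\nabla\chi_0)$. Your schematic description of the remainders as $\eps^2(\nabla\chi_\bullet)\cdot\nabla(\cdots)$ is therefore only correct for $r_1=[-\eps^2\Delta_\theta,\tilde\chi_1]R_1\chi_1$; for $r_0$ the relevant separated pair of supports is $(\supp\chiTldZero,\supp\chi_0)$. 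The Agmon mechanism you invoke still applies, but to the correct pair.

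Second, and this is the real work the paper does that you only gesture at, the basic identity \eqref{eq=basicAgmon} is written for the \emph{undistorted} Laplacian, while here you need decay estimates through the non-self-adjoint $H_0$ (which involves $\Delta_\theta$ and $V_\theta$). The paper handles this by requiring $\phi$ to be \emph{radial}, so that the weight $e^{2\phi/\eps}$ commutes with the angular part, and then using a sectoriality argument: for small $\beta$ one has $V_\eps(x)\leq 2\,\xRe V_\theta(x)$ and $r^{-2}\leq 2\,\xRe(r_\theta^{-2})$, which lets one dominate the real quadratic form by the real part of the distorted one. This is what converts \eqref{eq=basicAgmon} into a usable inequality for $H_0$ and yields $\norm{\chiTldZero R_0\chi_0 v}^2\aleq \eps^{-2}\exp(2(I-S)/\eps)\norm{v}^2$ with $S-I\to\infty$. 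Your phrase ``or its analogue for the distorted operator'' hides exactly this step; without the radiality constraint and the sectoriality bound the argument does not go through.

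Finally, your last sentence defers the details to ``the proof of theorem \ref{prpstn=resolventsGluing} itself'', which is circular since that is precisely the proposition you are proving.
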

Once this is proved, the bound on the true resolvent follows. Indeed, for $\eps$ sufficiently small, the r.h.s. of
\eqref{eq=RTildeIsGood} is invertible, and its inverse is bounded by (say) $2$.
Multiplying \eqref{eq=RTildeIsGood} by this inverse, we get
\[
  (\extRotOp - z) \tilde{R}(z)(Id + \tilde{r}(\eps))^{-1} = Id.
\]
This implies that $(\extRotOp - z)$ is invertible, and, thanks to
\eqref{eq=RTildeIsBounded}, its inverse is bounded by
$2\frac{C_\eta}{\lambda^{1+\eta}}$. This will (finally!) end the proof of theorem
\ref{thrm=boundOnTheExteriorResolvent}.

\subsection{Proof of proposition \ref{prpstn=resolventsGluing}}
    Let us first decompose $(\extRotOp - z) \tilde{R} (z)$.
\begin{align}
    \notag
  (\extRotOp -z) \tilde{R} (z)
   &= \left(-\eps^2\Delta_\theta + V_\theta - z \right) \left(R_0(z) \chi_0 +
   \tilde{\chi}_1 R_1(z) \chi_1\right) \\
  \label{eq=reconstruction}
   &= (-\eps^2 \Delta_\theta + V + \eps\tilde{\chi}_0  - z)R_0(z) \chi_0 +
   (-\eps^2\Delta_\theta + V_1)\tilde{\chi}_1  R_1(z) \chi_1 \\
   \notag
   &\quad - \eps \tilde{\chi}_0 R_0(z)\chi_0 + (V-V_1)\tilde{\chi}_1  R_1(z) \chi_1 \\
\end{align}
The last term vanishes, because $(V-V_1)\tilde{\chi}_1 $ is zero. We commute
$\tilde{\chi}_1 $ and $(-\eps^2\Delta_\theta+V_1)$. Since $\chi_0 +\chi_1 = 1$, we get:
\begin{align}
    \notag
  (\extRotOp - z)\tilde{R}(z)
  &= Id + [-\eps^2\Delta_\theta,\tilde{\chi}_1 ]R_1\chi_1 - \eps\tilde{\chi}_0 R_0 \chi_0\\
  \label{eq=rZeroAndROne}
  &= Id + r_1 + r_0.
\end{align}
It remains to show that
the last two terms are small: $ \norm{r_0} + \norm{r_1} = o(\eps)$. The idea is
similar to the proof of the Agmon estimates (theorem \ref{thm=decayOfEigenfunctions}). It is made a bit more difficult by the fact that we are dealing with
the distorted operators and not with the usual Laplacian. 

Let us prove in some detail the bound on $r_0$. Let $v$ be in $L^2$, and let $u = R_0(z) \chi_0 v$. We would like to show: 
\begin{equation}
    \label{eq=boundOnChiTldZeroU}
    \norm{\chiTldZero u} \leq o(\eps)\norm{v}.
\end{equation}
 Let us recall the basic result used in the proofs of Agmon estimates (eq. \eqref{eq=basicAgmon}): 
\begin{equation}
     \label{eq=basicAgmonAgain}
    \eps^2 \int \abs{\nabla ( \exp(\phi/\eps) u)}^2 dx + \int ( V - \abs{\nabla \phi }^2) \exp( 2\phi/\eps) u^2 dx \\
    = \int \exp(2\phi/\eps) (-\eps^2 \Delta u + Vu) u dx.
\end{equation}
This is originally written for $\Omega$ a bounded domain, but the arguments of \cite{HS84} (in the proof of lemma 2.7) show that it extends to our case, if $\phi$ is constant at infinity. 

We need to choose a good function $\phi$. We impose: 
\begin{itemize}
    \item $\phi$ is radial;
    \item $\phi $ is constant on   $\supp \chiTldZero$  and on $\supp \chi_0$;
    \item Calling $S$ and $I$ the values of $\phi$ on $\supp\chiTldZero$,
	$\supp\chi_0$,  $S - I$ should go to  $\infty$ when $\eps\to 0$ ($S$
	and $I$ depend on $\eps$ through the choice of the functions $\chi_0$,
	$\chiTldZero$);
    \item $\abs{\nabla\phi}^2 \leq \frac{1}{2}V_\eps$.
\end{itemize}
\begin{rmrk}
    \label{rmk=choiceOfPhi}
    $\phi$ should be thinked of as (a multiple of) the Agmon distance to the
    support of $\chiTldZero$ (and made constant on $\supp \chi_0$). This
    particular choice cannot be made in general, since we need $\phi$ to be
    radial to perform our computations with the distorted operator $H_0$. 
    
    The hypotheses on $V$ make it easy to find a $\phi$ that works: choose $\phi(r) = r^\beta$ beween the supports, with $1<\beta<2- \gamma$. Since $\beta>0$, $S-I$ will go to infinity, and $\beta<2-\gamma$ ensures that $\abs{\nabla\phi}^2 \leq V_\eps/2$.
  \end{rmrk}

Let us begin our calculation: the idea is to apply equation \eqref{eq=basicAgmonAgain}, and the fact that on $\supp \chiTldZero$, $\phi$ is much larger than it is on $\supp \chi_0$. 
\begin{align*}
    \norm{\chiTldZero u}^2
    &= \int \chiTldZero (x) \abs{u(x)}^2 dx \\
    &\leq \left(\inf_{\chiTldZero >0}\left(
        (V_\eps + \eps\chiTldZero - \abs{\nabla \phi}^2) e^{2\phi/\eps}
    \right)\right)^{-1}
    \int e^{2\phi/\eps} (V_\eps + \eps\chiTldZero - \abs{\nabla\phi}^2) \abs{u}^2 dx.
\end{align*}
Let us write the inf as $\tilde{I}$, and apply \eqref{eq=basicAgmonAgain}.
\begin{equation}
    \label{eq=controlChiTldZeroU}
    \norm{\chiTldZero u}^2
    \leq \tilde{I}^{-1}
    \int e^{2\phi/\eps} ( -\eps^2\Delta + V_\eps + \eps\chiTldZero)u \cdot u dx.
\end{equation}
We want to use the fact that $H_0 u = v$: we need to bound the r.h.s. in terms of $H_0 = -\eps^2 \Delta_\theta + V_\theta$. We denote by
$A$ the function  such that $A(r(x)) = e^{2\phi(x)/\eps}$ (this is made possible by the requirement that $\phi$ should be radial).
\begin{align}
    \notag
   \forall x,  V_\eps(x) &\leq 2 \xRe( V_\theta(x)), \text{therefore}\\
   \label{eq=controlOfV}
    \int A(r(x)) V_\eps(x) \abs{u(x)}^2 
    &\leq 2 \xRe\left( \int V_\theta(x) \abs{u(x)}^2 dx \right).
\end{align}
We treat the kinetic part with a kind of ``sectoriality'' argument, using the decomposition \eqref{eq=decomposition} that helped us define the distorted operator: 
\begin{align*}
    \forall x, \frac{1}{r^2(x)} 
    &\leq\xRe\left( \frac{2}{r_\theta(x)^2} \right), \text{ therefore }\\
    \int A(r) (-\Delta u)\cdot u dx 
    &= \iint (-D^2u)u drd\omega
      + \int\frac{A(r)}{r^2} \int\Lambda u\cdot u d\omega dr \\
    &\leq 2\iint (-D^2u)u drd\omega + 2\int A(r) 
      \xRe\left(\frac{1}{r_\theta^2}\right) \int \Lambda u \cdot u d\omega dr \\
    &\leq 2 \xRe\left(\int A(r) (-D^2 + \frac{\Lambda}{r_\theta^2})u \cdot u  dx\right) \\
    &\leq 2 \xRe \left( \int A(r) (- \Delta_\theta u)u dx\right).
\end{align*}
Inserting this inequality and \eqref{eq=controlOfV} in \eqref{eq=controlChiTldZeroU} yields:
\begin{equation*}
    \norm{\chiTldZero u}^2 \leq \frac{2}{\tilde{I}} \xRe\left( \int A(r(x))H_0 u \cdot u dx\right).
\end{equation*}
Now, $H_0 u = \chi_0 v$, by definition  of $u$. By definition of $A$ (and of
$\phi$), $A(r) = \tilde{S} =  \exp(-2S/\eps)$ on $\supp\chi_0$. Therefore
\begin{equation}
    \norm{\chiTldZero u}^2 
    \leq \frac{2\tilde{S}}{\tilde{I}}  
    \times \xRe\left( \int \chi_0 v \cdot u dx\right).
\end{equation}
Use Cauchy--Schwarz on the right side: 
\[
    \norm{\chiTldZero u}^2 
    \leq \frac{2\tilde{S}}{\tilde{I}}  
    \norm{\chi_0v} \norm{u}\leq \frac{2\tilde{S}}{\tilde{I}} \norm{v}\norm{u}.
\]
Now we use our a priori bound on the resolvent $R_0$ (
proposition \ref{prpstn=boundOnTheDirichletResolvent}): $\norm{u} = \norm{R_0 \chi_0 v}\leq (C/\eps) \norm{v}$.
Since $\tilde{S} = \exp( -2S/\eps)$ and $\tilde{I} \geq \eps \exp(2I\eps)$, we
get:
\[
    \norm{\chiTldZero u}^2 
    \leq \frac{2C}{\eps^2}  
    \exp\left( \frac{2(I-S)}{\eps}\right) \norm{v}^2.
\]
Since $S-I$ goes to infinity, $\norm{\chiTldZero R_0 \chi_0 v} =
\norm{\chiTldZero u} = o(\eps) \norm{v}$, \eqref{eq=boundOnChiTldZeroU} holds and 
$r_0$ is indeed a small term.

\bigskip

Recall that the other term, $r_1$, is defined by
  \[
  r_1 =  [-\eps^2\Delta_\theta,\tilde{\chi}_1 ]R_1\chi_1 
  \]
  The commutator is explicit:
\[ 
  [-\eps^2\Delta_\theta, \tilde{\chi}_1 ] 
  = -\eps^2\left( 
    \Delta_\theta\tilde{\chi}_1  -   \nabla\tilde{\chi}_1  \nabla
    \right).
\]
Since $\supp{\nabla\tilde{\chi}_1 }$ is far (in Agmon distance) from
$\supp{\chi_1}$, an argument similar to the one we just developed for $r_0$
proves the same kind of estimate. Therefore proposition
\ref{prpstn=resolventsGluing} holds.


\section{Spectral stability} 
\label{sec=spectralStability}
In this section, we prove the stability of spectral quantities when we put a
boundary condition on the sphere~: near the eigenvalues of $\intOp$,
there must be eigenvalues of $\rotOp$, \ie{} resonances. We first prove an estimate
on the Dirichlet perturbation. Then, we get the existence of resonances and a first rough
localization result. Finally, we refine this localization and prove theorem \ref{thm=mainResult}.

\subsection{An estimate on the Dirichlet perturbation}
For any $a$ and $\theta$, we define, following \cite{CDKS87}, 
\[
W(\theta,a)  = R(\theta,a) - R^D(\theta,a).
\]
This describes how much the Dirichlet boundary condition changes
the solution of the equation $\fullOp u = \phi$. 

The idea is to express the perturbation in terms of trace
operators on the boundary of the sphere, and then use Agmon estimates (inside
and outside the sphere) to control the traces.

Let $T_e$, $T_i$ be the trace operators on the outside and inside of the
sphere. Still following \cite{CDKS87}, define
\begin{equation}
  \label{eq=AsAndBees}
\begin{aligned}
  A  (\theta,a) &= T_i (\rotOp - a)^{-1},\\
  B_e(\theta,a) &= e^{-3\theta/2} T_e \nabla_r (\extRotOp - a)^{-1}, \\
  B_i(\theta,a) &= T_i \nabla_r (\intOp - a)^{-1}, \\
  B(\theta,a)   &= B_i \oplus B_e.
\end{aligned}
\end{equation}
where $\nabla_r$ is the radial derivative. We also define $\PDrchlt$ as the spectral projector on the span of the exponentially small eigenvalues of the Dirichlet operator, and $\QDrchlt = 1 - \PDrchlt$.
\begin{prpstn}[\cite{CDKS87}, eq. (3.2)]
    The perturbation $W$ can be decomposed in the following way:
    \begin{equation}
        \label{eq=decompositionOfW}
        W(\theta,a) = \eps^8B^*T(H - a)^{-1}T*B.
  \end{equation}
\end{prpstn}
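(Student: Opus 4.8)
The plan is to obtain \eqref{eq=decompositionOfW} as the analogue, for the distorted operator $\rotOp$, of Krein's resolvent formula across the sphere $\Sigma=\partial\largeBall$, following \cite{CDKS87} and paying attention to the two features particular to our setting: $\rotOp$ is not self-adjoint, and the exterior domain is unbounded. (Throughout, $R(\theta,a)=(\rotOp-a)^{-1}$ is the distorted resolvent --- this is the $(H-a)^{-1}$ of the statement --- and $R^D(\theta,a)$ its Dirichlet-decoupled version.) First I would test both resolvents on $f\in L^2$: put $u=R(\theta,a)f$, $u^D=R^D(\theta,a)f$ and $w=Wf=u-u^D$. Off $\Sigma$ both $u$ and $u^D$ solve $(\rotOp-a)\cdot=f$, so $(\rotOp-a)w$ vanishes in $\largeBall$ and in $\compl{\largeBall}$ separately. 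Since $u$ lies in the operator domain it is regular up to $\Sigma$ from either side, with matching value and matching radial derivative, whereas $u^D$ vanishes on $\Sigma$ but jumps in its radial derivative; hence $w$ is continuous across $\Sigma$ with common trace $Tw=Tu$, and $(\rotOp-a)w$ is a single-layer distribution carried by $\Sigma$.

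Next, since $a$ lies in a small disc around the simple eigenvalue $\lambda_j$, I may assume it avoids the Dirichlet spectra of $\intOp$ and $\extRotOp$, so that $w$ is the unique $(\rotOp-a)$-harmonic extension of $Tu$ on each side. I would then run the Green identity in its \emph{bilinear} form --- $\int(\rotOp f)g-f(\rotOp g)=\pm\eps^2\int_\Sigma(\cdots)$, legitimate because exterior complex scaling is a complex dilation and hence preserves the pairing $\int fg$ --- pairing $u$ against $(\intOp-a)^{-1}\psi$ and against $(\extRotOp-a)^{-1}\psi$. This identifies the interior and exterior Poisson (harmonic extension) operators with the bilinear transposes of the ``normal derivative of the Dirichlet resolvent'' maps, i.e.\ with $\eps^2B_i^{*}$ and $\eps^2B_e^{*}$ up to signs and the phase $e^{\pm3\theta/2}$ that records how $\nabla_r$ transforms under the distortion. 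At this stage $W=\pm\,\eps^2\,B^{*}\bigl(T R(\theta,a)\bigr)$.

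The remaining task is to collapse $T R(\theta,a)$ back into the symmetric shape. Here I would match the continuous radial derivative of $u$ computed from the two harmonic extensions: this expresses $Tu$ through $M^{-1}$, where $M=\mathrm{DN}_i-\mathrm{DN}_e$ is the jump of the two Dirichlet-to-Neumann maps, applied to a boundary datum of $u^D$ that one more Green identity rewrites through $B$. The key point is the Krein identity $M^{-1}=\mp\,\eps^2\,T R(\theta,a)\,T^{*}$: the single-layer potential $Sh=R(\theta,a)(\delta_\Sigma h)$ is $(\rotOp-a)$-harmonic off $\Sigma$ with prescribed jump $-\eps^{-2}h$ in its radial derivative, so its boundary trace $TS=T R(\theta,a)T^{*}$ inverts $M$ up to the scalar $\mp\eps^{-2}$. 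Substituting the Poisson representation, the boundary solve and this single-layer identity, and collecting the resulting powers of $\eps^2$, gives
\[ W(\theta,a)=\eps^{8}\,B^{*}\,T\,R(\theta,a)\,T^{*}\,B. \]

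The sign and phase bookkeeping (the $e^{\pm3\theta/2}$'s and the eight powers of $\eps$) is routine; the genuine work --- and the main obstacle --- is making the first two steps rigorous here. One must first establish elliptic regularity up to $\Sigma$ for the distorted \emph{exterior} resolvent, so that $T_e$ and $T_e\nabla_r$ are bounded into the relevant Sobolev spaces on $\Sigma$ and hence $B$, $B^{*}$, $T$, $T^{*}$ are bounded operators --- this is exactly where one uses that $(\extRotOp-a)^{-1}$ exists, the essential spectrum having been rotated off $\xR_+$ (\cf{} section~\ref{sec=exteriorResolvent}) --- and every Green identity must be carried out with the bilinear, not the Hermitian, pairing since $\rotOp$ is non-self-adjoint. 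It also remains to check that $M(\theta,a)$ is invertible for $a$ in the complex neighbourhood at hand, which holds once the disc around $\lambda_j$ is small enough that $a\notin\sigma(\intOp)\cup\sigma(\extRotOp)$.
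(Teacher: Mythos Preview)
The paper does not give its own proof of this proposition: it is quoted verbatim from \cite{CDKS87} (equation (3.2) there) and used as a black box, with no argument supplied. So there is nothing in the paper to compare your proposal against.

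That said, your sketch is the standard derivation --- Krein's resolvent formula across the decoupling surface $\Sigma$, obtained by iterating Green's identity and identifying the Poisson/harmonic-extension operators with the (bilinear) transposes of the $B$'s --- and this is essentially the route taken in \cite{CDKS87} itself. Your care about using the bilinear rather than the Hermitian pairing (because $\rotOp$ is not self-adjoint) and about the boundedness of the trace maps on the distorted exterior resolvent is well placed; these are exactly the points one has to check when transporting the argument of \cite{CDKS87} to the present setting. The $\eps^{8}$ and the $e^{-3\theta/2}$ in the definition of $B_e$ are indeed pure bookkeeping coming from the four successive Green identities (each contributing an $\eps^{2}$) and from the transformation law of $\nabla_r$ under the exterior dilation. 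If you want to write this up in full, the cleanest reference to follow line by line is \cite{CDKS87}, \S III.
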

We show the following:
\begin{thrm}
    \label{thrm=boundOnW}
    For any $\delta$, there is a $C_\delta$ such that:
    \begin{align}
        \label{eq=boundOnA}
      \norm{A} &\leq C_\delta e^{\delta/\eps}, \\
        \label{eq=boundOnB}
      \norm{B} &\leq C_\delta e^{\delta/\eps}, \\
      \label{eq=boundOnW}
      \norm{W} &\leq C_\delta e^{\delta/\eps}.
    \end{align}
    Moreover, $B$ is very small on the Dirichlet eigenspaces~:
    \begin{equation}
        \label{eq=BPDisSmall}
        \norm{B \PDrchlt}
        \leq C_\delta \exp \left (- \frac{S(\eps) - \delta}{\eps} \right)
    \end{equation}
    \end{thrm}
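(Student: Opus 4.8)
The plan is to estimate each of the three operators $A$, $B_i$, $B_e$ by controlling the trace of a resolvent applied to an arbitrary $L^2$ function, and then to combine these with the decomposition \eqref{eq=decompositionOfW} to bound $W$. The key tool throughout is a trace inequality: for a function $u$ on (the interior or the exterior of) the sphere $\partial\largeBall$, the $L^2$ norm of its trace $T u$ on the sphere, and of $T\nabla_r u$, can be controlled by $\norm{u}_{H^1}$ and $\norm{u}_{H^2}$ on a collar neighbourhood of the sphere (a standard trace theorem, with constants depending on the collar width — which here we may take fixed, since the sphere has radius $\rZero\to\infty$ but we only look at an $O(1)$-thick shell around it). So for $u = (\intOp - a)^{-1}\phi$ or $u = (\extRotOp-a)^{-1}\phi$, I need $H^1$ and $H^2$ control of $u$ near the sphere. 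This is where the Agmon-type decay of Theorem \ref{thm=decayOfEigenfunctions} enters, plus elliptic regularity: the relevant eigenfunctions (and, via the spectral decomposition, the resolvent applied to $\phi$ on the low-lying eigenspace) are concentrated near the wells, which lie deep inside $\fixedBall$; hence on the shell around $\partial\largeBall$ they are exponentially small, of order $\exp(-d(\partial\largeBall)/\eps)$ where $d(\partial\largeBall) = \agmon(\partial\largeBall,\minima)\to\infty$. This exponential smallness, divided by the (at worst exponentially small, hence $e^{\delta/\eps}$-controllable after taking $\delta$ arbitrary) denominators coming from $(\intOp-a)^{-1}$, yields \eqref{eq=boundOnA}, \eqref{eq=boundOnB}.

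More precisely, I would proceed as follows. \textbf{Step 1.} Record a trace lemma: $\norm{Tu}_{L^2(\partial\largeBall)} + \norm{T\nabla_r u}_{L^2(\partial\largeBall)} \aleq C(\eps)\,\norm{u}_{H^2(\text{shell})}$, being careful to track how the constant depends on $\eps$ (it is polynomial in $\eps^{-1}$ because of the $\eps^2$ in front of $\Delta$, hence absorbable into $e^{\delta/\eps}$). \textbf{Step 2.} For the interior operators $A$ and $B_i$: write $(\intOp-a)^{-1} = (\intOp-a)^{-1}\PDrchlt + (\intOp-a)^{-1}\QDrchlt$. On the range of $\QDrchlt$ the resolvent is bounded by $C/\eps$ (spectral gap above $C\eps^{3/2}$, Theorem \ref{thm=spectrumOfTheInteriorOperator}), and elliptic estimates give $H^2$ control; combined with the trace lemma this contributes an $e^{\delta/\eps}$ bound. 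On the range of $\PDrchlt$, expand in the eigenbasis $\phi_i$ of $\intOp$; each $\phi_i$ obeys the decay estimate \eqref{eq=decayOfEigenfunctions-fixedOp}, so $\norm{T\phi_i}_{L^2(\partial\largeBall)} \aleq C_\delta\exp(-(d(\partial\largeBall)-\delta)/\eps)$, and dividing by $\abs{\lambda_i - a} \gtrsim \lambda_i$ (which is only exponentially small) still leaves something bounded by $e^{\delta/\eps}$ after relabelling $\delta$; here one uses that $d(\partial\largeBall) = d'_0$ (or larger) exceeds the exponents $d_i$ — indeed, this is exactly where the definition \eqref{eq=defRPrime} of $R'$, and the fact that $\rZero$ is chosen even further out, is used, and it forces $S(\eps)$ in \eqref{eq=BPDisSmall} to be essentially $d(\partial\largeBall) - \max_i d_i \to\infty$. \textbf{Step 3.} For the exterior operator $B_e$: the resolvent $(\extRotOp-a)^{-1}$ is controlled by Theorem \ref{thrm=boundOnTheExteriorResolvent}, $\norm{R^D_e(\theta,a)} \aleq \lambda_j^{-1-\eta}$, again only a negative power of an exponentially small quantity, hence $\aleq e^{\delta/\eps}$; elliptic regularity for the distorted operator (whose second-order part is elliptic with constants uniform in $\eps$ up to the $\eps^2$) plus the trace lemma and the $e^{-3\theta/2}$ prefactor give \eqref{eq=boundOnB} for $B_e$. \textbf{Step 4.} Assemble: in \eqref{eq=decompositionOfW}, $W = \eps^8 B^* T (H-a)^{-1} T^* B$; the trace operators $T$, $T^*$ are bounded (again by a trace theorem, polynomially in $\eps^{-1}$), $(H-a)^{-1}$ near $0$ is controlled by gluing the interior description and the exterior resolvent bound (or simply by the same kind of estimate), so $\norm{W} \aleq \eps^8 \cdot \text{poly}(\eps^{-1}) \cdot (e^{\delta/\eps})^2 \aleq e^{\delta/\eps}$ after halving $\delta$. \textbf{Step 5.} For \eqref{eq=BPDisSmall}: this is the refined version of Step 2 on the $\PDrchlt$-range where I do \emph{not} throw the decay exponent away — I keep $\norm{B\PDrchlt} \aleq C_\delta \exp(-(S(\eps)-\delta)/\eps)$ with $S(\eps) = \inf\{\agmon(\partial\largeBall, \minima)\} - \max_{1\le i\le N} d_i$ (up to the interior spectral structure), which tends to infinity by Definition \ref{dfntn=rEps} and the choice of $R'$.

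The main obstacle I expect is Step 2/Step 5, namely getting the decay estimate to apply in the form I need: Theorem \ref{thm=decayOfEigenfunctions} is stated for a \emph{bounded} domain, whereas $\intOp$ lives on the growing ball $\largeBall$ whose radius depends on $\eps$. One must check that the constant $C_\delta$ and the threshold $\eps_0$ in \eqref{eq=decayOfEigenfunctions-fixedOp} can be taken uniform as the domain grows — in fact the proof of that theorem only used the bound outside a \emph{fixed} ball $\Omega_+$ near the wells and a compactness argument there, so the dependence on the outer radius is harmless, but this needs to be stated carefully. A secondary, more technical nuisance is bookkeeping the powers of $\eps$ in all the elliptic and trace estimates; since every such factor is at worst polynomial in $\eps^{-1}$ and every target bound is of the crude form $C_\delta e^{\delta/\eps}$ with $\delta$ arbitrary, none of this is serious, but it must be done honestly so that the genuinely exponentially small gain from Agmon decay is not swamped.
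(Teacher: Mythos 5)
Your plan follows essentially the same route as the paper: control the traces via a trace inequality on a collar of the sphere, split the interior resolvent into its $\PDrchlt$ and $\QDrchlt$ parts, use the Agmon decay of Theorem \ref{thm=decayOfEigenfunctions} on the low-lying eigenspace and the crude spectral-gap bound on the complement, invoke Theorem \ref{thrm=boundOnTheExteriorResolvent} for $B_e$, and assemble $W$ through \eqref{eq=decompositionOfW}. The only cosmetic difference is that the paper uses the multiplicative trace inequality $\norm{T_i u}^2 \leq 2\norm{\chi u}\,\norm{\nabla\chi u}$ of \cite{CDKS87} (handling the second radial derivative by relative boundedness with respect to $\intOp$) rather than a standard additive $H^2$-trace theorem, which changes nothing of substance.
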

The remainder of the section is the proof of this result. 
We do it in several steps.
\step{Bounds on $B_i$ and $B_e$} 
    We detail the bounds on the interior part $B_i$. We use the following to control the trace operator:
    \begin{thrm}
        \label{thrm=traceControl}
        For all $u\in H^1$, and all cutoff function $\chi$ supported near the boundary of the ball,
        \[ \norm{T_i u}^2 \leq 2\norm{\chi u}\norm{ \nabla\chi u}\]
    \end{thrm}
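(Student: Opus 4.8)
The plan is to deduce the inequality from the classical one--dimensional trace estimate applied along radii, using crucially that $\chi u$ is supported in a thin shell adjacent to $\partial\largeBall$ and that the radius $\rZero$ tends to infinity.

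\textbf{Reduction.} Since $\chi$ is supported near $\partial\largeBall$ and equals $1$ there, $T_i u = T_i(\chi u)$, so it suffices to bound the trace of $w := \chi u$; and $w$ vanishes identically inside a fixed shell, say $w\equiv 0$ for $\abs{x}\le r_0 - \eta$, with $\supp\chi \subset \{\,r_0-\eta\le\abs{x}\le r_0\,\}$ for some fixed $\eta>0$. By the usual density argument we may assume $w$ smooth and (to be safe with the complex case) keep real parts throughout.

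\textbf{The one--dimensional estimate.} For a.e. $\omega\in S^{n-1}$, noting $\partial_r w(r\omega)=\omega\cdot\grad w(r\omega)$ and that $w$ vanishes at the inner edge of the shell,
\[
  \abs{w(r_0\omega)}^2 = \int_{r_0-\eta}^{r_0}\partial_r\abs{w(r\omega)}^2\,dr
  = 2\int_{r_0-\eta}^{r_0}\xRe\!\bigl(\overline{w(r\omega)}\,\omega\cdot\grad w(r\omega)\bigr)\,dr
  \le 2\int_{r_0-\eta}^{r_0}\abs{w(r\omega)}\,\abs{\grad w(r\omega)}\,dr.
\]
Multiplying by $r_0^{\,n-1}$, integrating over $\omega$, comparing the flat radial measure with the Euclidean volume element (on the shell one has $r^{\,n-1}\ge(r_0-\eta)^{\,n-1}$), and applying Cauchy--Schwarz, one obtains
\[
  \norm{T_i u}^2 \le 2\Bigl(\tfrac{r_0}{r_0-\eta}\Bigr)^{n-1}\!\int_{\largeBall}\abs{w}\,\abs{\grad w}\,dx
  \le 2\Bigl(\tfrac{r_0}{r_0-\eta}\Bigr)^{n-1}\norm{\chi u}\,\norm{\grad(\chi u)}.
\]
Because $\rZero\to\infty$ as $\eps\to 0$ while $\eta$ stays fixed, the geometric factor $\bigl(r_0/(r_0-\eta)\bigr)^{n-1}$ tends to $1$, and the bound with constant $2$ holds (for $\eps$ small, which is the only regime of interest). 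The identical computation, with the one--sided radial limit taken from $\abs{x}>r_0$, gives the corresponding statement for $T_e$.

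\textbf{Main obstacle.} There is no deep difficulty here: the statement is the classical $H^1\to L^2(\text{sphere})$ trace inequality rewritten in polar coordinates. The only point requiring attention is that the constant comes out exactly $2$ rather than $2C_n$ -- and this is precisely where the divergence $\rZero\to\infty$ of the radius of the ball is exploited, to absorb the volume--element factor $(r_0/(r_0-\eta))^{n-1}$ -- together with the routine reduction to smooth $w$ by density.
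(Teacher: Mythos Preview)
Your argument is correct and is the standard one; note that the paper does not actually prove this statement but simply refers to \cite{CDKS87}, Lemma~4, so there is no ``paper's own proof'' to compare against beyond that citation.

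One minor remark on the constant. Your computation yields
\[
  \norm{T_i u}^2 \le 2\Bigl(\tfrac{r_0}{r_0-\eta}\Bigr)^{n-1}\norm{\chi u}\,\norm{\nabla(\chi u)},
\]
with a factor strictly larger than $2$ for each fixed $\eps$, and you then argue that this factor tends to $1$ as $\eps\to 0$. That is perfectly sufficient for every use of the theorem in the paper (only a bound of the form $C_\delta e^{\delta/\eps}$ is ever needed downstream). If you want the constant to be exactly $2$ as stated, the clean way is to pass first through the unitary map $\mathcal{O}: f\mapsto r^{(n-1)/2}f(r\omega)$ of Section~3.1, which replaces the measure $r^{n-1}\,dr\,d\omega$ by the flat measure $dr\,d\omega$; the one--dimensional trace identity then gives $2$ on the nose, with no geometric factor to absorb. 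This is presumably how the inequality is set up in \cite{CDKS87}.
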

    For a proof, see \cite{CDKS87} (lemma 4). 
   
    So, it is enough to show:
    \begin{align}
        \label{eq=traceControl_1}
        \norm{\chi (\intOp -a)^{-1} u} 
	&\aleq \eps^{-3/2} \norm{u}, \\
        \label{eq=traceControl_2}
        \norm{ \nabla_r\chi (\intOp -a)^{-1} u}
        &\aleq C_\delta \exp(\delta/\eps) \norm{u}, \\
        \label{eq=traceControl_3}
        \norm{ \nabla_r\chi\nabla_r (\intOp -a)^{-1} u}
        &\aleq C_\delta \exp(\delta/\eps)\norm{u}.
    \end{align}
    The control \eqref{eq=traceControl_1} follows from Agmon estimates. Indeed, we know that the spectrum of the interior operator consists of two parts. Let $S$ be the Agmon distance between the minima and the ball of radius $\rZero$. Then 
    \begin{align*}
        \norm{\chi (\intOp - a)^{-1} u }
        &\leq \norm{\chi (\intOp - a)^{-1} \PDrchlt u}
          + \norm{ (\intOp - a)^{-1} \QDrchlt u}\\
        &\aleq C_\delta \exp\left( -\frac{S - \delta}{\eps}\right)\norm{u}
        + \eps^{-3/2}\norm{u}, 
    \end{align*}
    where the first bound is the Agmon estimate of theorem \ref{thm=decayOfEigenfunctions} on eigenfunctions coming from small eigenvalues, and the second bound comes from the fact that $(\intOp -a)^{-1}$ is bounded by $C/\eps$ on the range of $\QDrchlt$.

    We now turn to the proof of \eqref{eq=traceControl_2}. Once more, decompose $u$ as $Pu + (u-Pu)$. To bound $ \norm{\nabla_r\chi (\intOp -a)^{-1} Pu}$, we use theorem \ref{thm=decayOfEigenfunctions} again:
    \begin{align*}
      &\norm{\nabla_r\chi(\intOp - a)^{-1}   \PDrchlt u}\\
      &\quad\leq 
        \exp\left( -  \inf_{\supp\chi}d(x) / \eps \right)
      \norm {\exp(d(x)/\eps)\abs{\nabla (\intOp - a)^{-1} Pu} } \\
      &\quad\leq 
        C_\delta \exp\left(
          (\delta -  S(\eps) / \eps 
        \right)
    \end{align*}
    Since $\chi$ is localized on the boundary, the r.h.s. is $o(\eps)$, and \eqref{eq=traceControl_2} is proved (for $\PDrchlt u$).

    To bound the other term, we apply \eqref{eq=basicAgmon} once more, with $u = (\intOp - a)^{-1}\QDrchlt u$: 
    \begin{align*}
      &\norm{\nabla_r\chi(\intOp - a)^{-1} \QDrchlt u}^2 \\
      &\quad \leq \exp\left( -2  \inf_{\supp\chi}d(x) / \eps \right)
      \norm {\exp(d(x)/\eps)\abs{\nabla (\intOp - a)^{-1}\QDrchlt u}}^2 \\
      &\quad \leq \exp\left( -2  \inf_{\supp\chi}d(x) / \eps \right)
      \int \exp(2d(x)/\eps) (1-P)u \cdot (\intOp - a)^{-1}\QDrchlt u dx\\
      &\quad\leq  \exp\left( \left(\sup d(x) -  \inf_{\supp\chi} d(x)\right) / \eps \right)
      \norm{\QDrchlt u}\norm{(\intOp -a)^{-1} \QDrchlt u} \\
      &\quad\leq C_\delta \exp\left(\delta/\eps\right) \eps^{-3/2} \norm{u}^2,
    \end{align*}
    where we successively introduce the Agmon distance, use \eqref{eq=basicAgmon}, use Cauchy--Schwarz and finally use the easy bound on the resolvent restricted to the range of $\QDrchlt $. Therefore, \eqref{eq=traceControl_2} holds. 

    We turn to the proof of \eqref{eq=traceControl_3}: this may be reduced to the previous estimates. To see it, first remark that, now that \eqref{eq=traceControl_1} and \eqref{eq=traceControl_2} are known, it is enough to show a bound on $\norm{\nabla_r\nabla_r \chi (\intOp - a)^{-1}}$ (commute the $\chi$ through the $\nabla_r$, and use the previous bounds with $\nabla\chi$ instead of $\chi$). The double radial derivative is now relatively bounded with respect to the Laplacian (cf. for example \cite{Kle86}): 
    \[
    \exists C, \forall v,
    \quad \norm{\nabla_r \nabla_r v} \leq C(\norm{v} + \norm{\Delta v}).
    \]
    The Laplacian is itself relatively bounded w.r.t. $\intOp$ (because the potential $V_\eps$ is bounded): 
    \[
    \norm{\nabla_r\nabla_r v}
    \leq C\norm{v} + \frac{C}{\eps^2} \norm{(\intOp -a) v}
    \]
    Setting $v = \chi(\intOp -a)^{-1}u$, and using \eqref{eq=traceControl_1} and \eqref{eq=traceControl_2} again, we obtain \eqref{eq=traceControl_3}.

    Thus, we have bounded $B_i$. The proof also shows that, on the range of $\PDrchlt$, the much better bound \eqref{eq=BPDisSmall} holds. Using similar arguments, we can control the exterior operator $B_e$ and get \eqref{eq=boundOnB}.


\step{Bound on $T R(a) T^*$}
The proof of this bound is a straightforward adaptation of the proof of the
simiular result in \cite{CDKS87} (equation 3.7 of that reference), and uses the
same arguments we just applied to bound $B$. Indeed, since $T^*$ maps
continuously the sphere into $L^2$, it suffices to bound $TR(a)u$. Using
theorem \ref{thrm=traceControl}, this reduces to bounds on $\chi R(a)u$ and
$\nabla_r\chi R(a)u$, for $\chi$ supported near the boundary. These are in turn
obtained by Agmon-type estimates. 

\step{Conclusion}
The bounds on the different parts of $B$ (first and second steps) prove that \eqref{eq=boundOnB} holds; the bound on $TR(a)T^*$, together with the decomposition \eqref{eq=decompositionOfW} of $W$ and the bound on $B$, show \eqref{eq=boundOnW}. This concludes the proof of theorem \ref{thrm=boundOnW}.
%
\subsection{Existence of resonances}
We follow the proof of \cite{CDKS87} (lemma 3 and theorem 4) and use the
classical argument of integrating resolvents on a contour to get spectral
projections on appropriate eigenspaces. 
More precisely, for each $j$ (and each eigenvalue $\lambda_j$ of the interior
operator), we define the contour $\Gamma_j(\eps)$ to be the circle of radius
$\lambda_j/2$, centered in $\lambda_j$. We would like to show that $
\int_{\Gamma_j} R(\theta,z) dz$ and $\int_{\Gamma_j} R^D(\theta,z)dz$ have the
same rank. We change variables: let $a(\eps) = - \exp(-d_a/\eps)$, where $d_a > d_1$, 
and let $\tilde{z} = \frac{1}{z-a}$. The first integral above exists if and
only if
\[
  \int_{\tilde{\Gamma}_j} ( R(\theta,a ) - \tilde{z} )^{-1} d\tilde{z} 
\]
exists, and if so, their values are equal.

We have all the ingredients to estimate the integrand.
\begin{prpstn}
  \label{prpstn=RDMinusZTildaSmall}
  The quantity $(R^D(\theta,a) - \tilde{z})^{-1}$ is $O(\lambda_j^{1-\eta}(\eps))$ when $\eps\to0$, uniformly on $\tilde{z} \in \tilde{\Gamma_j}$.
\end{prpstn}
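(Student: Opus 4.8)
The plan is to reduce this to the two resolvent bounds already established, by means of an elementary algebraic identity. Put $z = a + 1/\tilde z$, so that $z$ ranges over the contour $\Gamma_j$ exactly when $\tilde z$ ranges over $\tilde{\Gamma}_j$ (this is again a circle, since $\Gamma_j$ does not pass through $a$), and recall that $R^D(\theta,\cdot)$ is the resolvent of the Dirichlet-decoupled, exterior-dilated operator $\intOp\oplus\extRotOp$, so that $R^D(\theta,z) = (\intOp-z)^{-1}\oplus\extDResolv(\theta,z)$. First I would verify the identity
\[
  \bigl(R^D(\theta,a)-\tilde z\bigr)^{-1} = -(z-a)\,I - (z-a)^2\,R^D(\theta,z)
\]
(valid whenever the right-hand side makes sense): multiplying out and using the resolvent identity $R^D(\theta,a)R^D(\theta,z) = \bigl(R^D(\theta,z)-R^D(\theta,a)\bigr)/(z-a)$ together with $(z-a)\tilde z = 1$ shows that the right-hand side is a two-sided inverse of $R^D(\theta,a)-\tilde z$. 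In particular the inverse exists and
\[
  \norm{\bigl(R^D(\theta,a)-\tilde z\bigr)^{-1}} \leq \abs{z-a} + \abs{z-a}^2\,\norm{R^D(\theta,z)}.
\]

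It then suffices to bound the two factors uniformly for $z\in\Gamma_j$. Since $z$ lies on the circle of radius $\lambda_j/2$ around $\lambda_j$, we have $\abs{z}\leq\tfrac32\lambda_j$, while $\abs{a}=e^{-d_a/\eps}$ is exponentially smaller than $\lambda_j$ (this is where the choice of $a$, placed below and to the left of all the small eigenvalues, is used), so $\abs{z-a}\aleq\lambda_j$ for $\eps$ small. For $\norm{R^D(\theta,z)}$ I would estimate the two summands separately: the interior operator $\intOp$ is self-adjoint and, by theorem \ref{thm=spectrumOfTheInteriorOperator}, its spectrum is the collection of logarithmically well-separated eigenvalues $\lambda_i$ together with a part inside $[C\eps,\infty)$, so for $z\in\Gamma_j$ the distance from $z$ to $\sigma(\intOp)$ is $\lambda_j/2$ (attained at $\lambda_j$), every other spectral point being at distance $\gtrsim\lambda_j$ when $\eps$ is small, whence $\norm{(\intOp-z)^{-1}}\aleq\lambda_j^{-1}$; the exterior part is precisely what theorem \ref{thrm=boundOnTheExteriorResolvent} controls, so with $\theta=i\beta$ as there, and taking $\Gamma_j$ inside the region $\abs{z-\lambda_j}\leq c_z\lambda_j$ of validity of that bound, $\norm{\extDResolv(\theta,z)}\leq C_\eta\lambda_j^{-(1+\eta)}$. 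Hence $\norm{R^D(\theta,z)}\leq C_\eta\lambda_j^{-(1+\eta)}$, and combining,
\[
  \norm{\bigl(R^D(\theta,a)-\tilde z\bigr)^{-1}} \aleq \lambda_j + \lambda_j^2\cdot\lambda_j^{-(1+\eta)} \aleq \lambda_j^{1-\eta}
\]
uniformly on $\tilde{\Gamma}_j$, which is the assertion.

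I do not anticipate a genuine obstacle: the proposition is essentially a corollary of theorems \ref{thm=spectrumOfTheInteriorOperator} and \ref{thrm=boundOnTheExteriorResolvent}, and the only points needing care are bookkeeping ones — making sure the contour $\Gamma_j$ sits inside the region where the exterior resolvent estimate holds (shrinking its radius from $\lambda_j/2$ to $c_z\lambda_j$ if necessary), and making sure the shift $a$ is exponentially smaller than every $\lambda_j$ so that $\abs{z-a}\aleq\lambda_j$ holds uniformly. One should also record the trivial fact that $\tilde z$ stays bounded away from $0$ on $\tilde{\Gamma}_j$, so that $z = a + 1/\tilde z$ is well defined and the identity above is legitimate throughout the contour.
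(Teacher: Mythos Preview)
Your proposal is correct and follows essentially the same approach as the paper: the paper uses exactly the identity $(R^D(\theta,a)-\tilde z)^{-1} = -(z-a) - (z-a)^2 R^D(\theta,z)$, bounds $\abs{z-a}\aleq\lambda_j$ using that $a$ is exponentially smaller than $\lambda_j$, and then estimates $R^D(\theta,z)$ by splitting into the interior part (controlled by the distance from $z$ to $\sigma(\intOp)$, of order $\lambda_j$) and the exterior part (controlled by theorem \ref{thrm=boundOnTheExteriorResolvent}). Your write-up is in fact slightly more detailed than the paper's, in that you spell out the verification of the algebraic identity and the bookkeeping about the contour radius.
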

\begin{proof}
  We follow closely the proof of lemma $3$ in \cite{CDKS87}. We rewrite our
  quantity as 
  \[
    (R^D(\theta,a) - \tilde{z})^{-1}  = - (z-a) - (z-a)^2 R^D(\theta, z).
  \]
  Since $a = a(\eps) = -\exp(-d_a/\eps)$ is much smaller than $\lambda_j$,
  $\abs{z-a} \aleq \abs{\lambda_j}$. The Dirichlet resolvent $R^D$ is estimated
  separately on the interior and on the exterior. On the interior part, we
  estimate it by the inverse distance to the spectrum: $z$ is on $\Gamma_j$,
  therefore its distance to $\sigma(\intOp)$ is at least of the order of
  $\lambda_j$. On the exterior part, we use the bound
  \ref{thrm=boundOnTheExteriorResolvent}. We obtain:
  \begin{align*}
    \norm{(R^D(\theta,a) - \tilde{z})^{-1} }
    &= O(\lambda_j) + \lambda_j^2 \left( \frac{C}{\lambda_j} 
       + \frac{C}{\lambda_j^{1+\eta}} \right) \\
     &= O(\lambda_j^{1-\eta}).
  \end{align*}
\end{proof}
 
Recall that $W(\theta,a) = R(\theta,a) - R^D(\theta,a)$, and let us define, for $\zeta\in\xC$, $\abs{\zeta} \leq 2$,
\[
A_\zeta (\tilde{z})
  = \int_{\Gamma_j}
    (R^D(\theta,a) - \tilde{z})^{-1}
    \sum_n \left(
      W \left( R^D(\theta,a) - \tilde{z} \right)^{-1} \zeta 
    \right)^n.
\]
Now, $\norm{W\times (R^D(\theta,a) - \tilde{z})^{-1}}$ is $o(1)$, thanks to
theorem \ref{thrm=boundOnW} and proposition \ref{prpstn=RDMinusZTildaSmall}. Therefore,  the series on the r.h.s. converges when $\eps$ is small,
$A_\zeta$ is analytic in $\zeta$, and uniformly bounded in $\tilde{z}$.

For $\zeta = 0$, we recover $(R^D(\theta,a) - \tilde{z})^{-1}$. For $\zeta =
1$, it is easy to see that $A_1(R(\theta,a) - \tilde{z}) = Id$, therefore
$(R(\theta,a) - \tilde{z})$ is invertible with inverse $A_1(\tilde{z})$.

We now consider the contour integral:
\begin{equation}
    \label{eq=contourIntegral}
    P_\zeta = -(2i\pi)^{-1} \int_{\tilde{\Gamma}_j} A_\zeta(\tilde{z})  d\tilde{z}
\end{equation}
The boundedness guarantees the existence of the integral. Moreover, $P_\zeta$ depends analytically on $\zeta$. Since $P_0$ is the spectral projection on the eigenstate corresponding to $\lambda_j$, the analyticity shows that $P_1$ is still a projection on a one dimensional space, and there exists a unique eigenvalue of $H(\theta)$ inside the contour $\Gamma_j$. This eigenvalue is the resonance of the original operator that we were looking for.
%
\subsection{Refined estimates on resonances} 
The proof of our main result (theorem \ref{thm=mainResult}) is not yet complete. Up to this point, 
we only know that there exists a resonance (say $\mu_i(\eps)$) inside a contour $\Gamma_j$ around the eigenvalue $\lambda_i(\eps)$, where the radius of the circle $\Gamma_j$ is of the order of $\lambda_j$. In this section, we indicate how to refine the estimation to get the stronger estimates annouced in theorem \ref{thm=mainResult}.

We follow the strategy of \cite{CDKS87}, section V, to prove the following result~:
\begin{prpstn}
    \label{prpstn=tunnellingExpansion}
  There exists functions $t_i(\eps)$, $\sigma_{n,i}(\eps)$ such that $\mu_i(\eps)$ has the following development~:
  \[ \mu_i(\eps) = \lambda_i(\eps) + \sum_n \sigma_{n,i}(\eps) \frac{t(\eps)^n}{n!},\]
  where the $\sigma_{n,i}$ are uniformly bounded in $n,\eps$, and
  \[ t_i(\eps) = O \left(\exp \left(-\frac{S(\eps)} {\eps} \right)\right).\]
\end{prpstn}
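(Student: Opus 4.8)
The plan is to follow the Feshbach/Grushin-type reduction of \cite{CDKS87} (section V), adapted to our exterior-scaled setting. Recall from the previous subsection that, for $a(\eps) = -\exp(-d_a/\eps)$ and $\tilde z = 1/(z-a)$, the resonance $\mu_i(\eps)$ inside $\Gamma_j$ corresponds to the unique pole of $A_1(\tilde z) = (R(\theta,a) - \tilde z)^{-1}$ enclosed by $\tilde\Gamma_j$, and that the perturbation $W(\theta,a) = R(\theta,a) - R^D(\theta,a)$ has the factorized form \eqref{eq=decompositionOfW}, $W = \eps^8 B^* T (H-a)^{-1} T^* B$, with $B$ very small on the Dirichlet eigenspace: $\norm{B\PDrchlt} \leq C_\delta \exp(-(S(\eps)-\delta)/\eps)$ by \eqref{eq=BPDisSmall}. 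First I would expand $A_1$ in the Neumann series $A_1 = \sum_n (R^D(\theta,a)-\tilde z)^{-1}\left(W (R^D(\theta,a)-\tilde z)^{-1}\right)^n$, which converges since $\norm{W (R^D - \tilde z)^{-1}} = o(1)$ (theorem \ref{thrm=boundOnW} and proposition \ref{prpstn=RDMinusZTildaSmall}). Integrating against $-(2i\pi)^{-1}$ over $\tilde\Gamma_j$ produces the spectral projector $P_1$, and the resonance is recovered as $\mu_i = a + 1/\tilde\mu_i$, where $\tilde\mu_i = \tr(\tilde z P_1)/\tr(P_1)$ is the (unique) eigenvalue of $R(\theta,a)$ enclosed.

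Next I would isolate the leading term. The $n=0$ contribution to $P_1$ is exactly the Dirichlet spectral projector $\PDrchlt_j$ onto the eigenstate of $\lambda_j$, and the corresponding eigenvalue is $1/(\lambda_j - a)$, i.e. $\lambda_j$ back in the original variable. Each further term in the series carries \emph{two} factors of $B$ acting (after cycling the trace and using that $P_1$ is, to leading order, $\PDrchlt_j$) essentially on the Dirichlet eigenspace, hence each contributes a factor controlled by $\norm{B\PDrchlt}^2 \lesssim \exp(-2(S(\eps)-\delta)/\eps)$, against which the remaining resolvent factors $(R^D - \tilde z)^{-1}$ — bounded by $O(\lambda_j^{1-\eta})$ on $\tilde\Gamma_j$ (proposition \ref{prpstn=RDMinusZTildaSmall}) — and the $\eps^{-N}$ losses from $T R(a) T^*$ are negligible. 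Writing $t_i(\eps)$ for a quantity of order $\exp(-S(\eps)/\eps)$ capturing $\norm{B\PDrchlt}$ (up to the harmless $\delta$, absorbed since $\delta$ is arbitrary and $S(\eps)\to\infty$), and collecting the $n$-th order contributions into coefficients $\sigma_{n,i}(\eps)$, one gets
\[
  \mu_i(\eps) = \lambda_i(\eps) + \sum_n \sigma_{n,i}(\eps)\frac{t_i(\eps)^n}{n!},
\]
with the $\sigma_{n,i}$ uniformly bounded in $n$ and $\eps$ because the underlying Neumann series converges geometrically with a ratio that is $o(1)$ uniformly in $\eps$.

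The main obstacle, as in \cite{CDKS87}, is \textbf{bookkeeping the factors of $B$ carefully}: a naive estimate only puts \emph{one} $B\PDrchlt$ into play and yields $t_i \sim \exp(-S/\eps)$ without the square, which would still be exponentially small but would not match the structure of the expansion. The point is that after extracting the $n=0$ term, one must commute the contour integral and the projector so that in each higher term \emph{both} trace-and-$B$ factors land on (a small perturbation of) $\PDrchlt_j$; this requires re-expanding $P_1$ itself perturbatively and checking that the cross terms (where only one $B$ sees $\PDrchlt$ and the other sees $\QDrchlt$) are even smaller, using that $(\intOp - a)^{-1}\QDrchlt$ is bounded by $C/\eps$ while the Agmon estimates force the trace of a $\QDrchlt$-eigenfunction near the sphere to be at most $C_\delta \eps^{-3/2}\exp(\delta/\eps)$ — harmless against one factor $\exp(-S/\eps)$. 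Once this is in place, the uniform boundedness of $\sigma_{n,i}$ and the estimate $t_i = O(\exp(-S(\eps)/\eps))$ follow directly, completing the proof of the proposition (and hence, by taking real and imaginary parts, of theorem \ref{thm=mainResult}).
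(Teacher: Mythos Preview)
Your overall strategy --- follow \cite{CDKS87}, section V --- is what the paper does, but your execution diverges at the crucial step and leaves a genuine gap. You try to extract the power-series expansion directly from the Neumann series for $P_1$ and then ``collect the $n$-th order contributions into coefficients $\sigma_{n,i}$''. The paper (and \cite{CDKS87}) instead derives an \emph{implicit equation} for $\tldMu$,
\[
  \tldMu - \tldLambda \;=\; t(\theta)\,\sigma(\theta,\tldMu),
\]
where $t$ and $\sigma$ are explicit trace expressions built from $A$, $B$, $\PDrchlt$, $\QDrchlt$ (namely $t = \eps^2\tr\abs{B\PDrchlt A^*}$ and $\sigma = \eps^2 t^{-1}\tr \PDrchlt A^*(1-M)B\PDrchlt$, with $M = \eps^2 B\QDrchlt(\QDrchlt R\QDrchlt - \tilde z)^{-1}\QDrchlt A^*$), and then applies \emph{Lagrange's inversion formula} to solve for $\tldMu$ as a power series in $t$, with coefficients $\tilde\sigma_{n,i} = \left.(d/dz)^{n-1}\sigma^n\right|_{\tilde z = \tldLambda}$. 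This algebraic device is precisely what organizes the bookkeeping you yourself flag as ``the main obstacle'': it cleanly separates the small parameter $t$ from an $O(1)$ function $\sigma$, whose boundedness reduces to showing $M = O(1)$ (via the exponential smallness of $(\QDrchlt R^D\QDrchlt - \tilde z)^{-1}$ and the interpolation argument after \eqref{eq=contourIntegral}). Without the implicit equation and Lagrange inversion, your ``collecting terms'' step does not produce well-defined $\sigma_{n,i}$, and geometric convergence of the Neumann series alone does not give that the individual coefficients are uniformly $O(1)$ \emph{after} factoring out $t^n/n!$.

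There is also a factor-counting confusion. You assert that each higher Neumann term carries $\norm{B\PDrchlt}^2 \lesssim \exp(-2(S-\delta)/\eps)$, yet then set $t_i$ of order $\exp(-S/\eps)$; these are inconsistent. In the correct scheme, $t = \eps^2\tr\abs{B\PDrchlt A^*}$ picks up \emph{one} factor of $B\PDrchlt$ (the $A^*$ only costs $\exp(\delta/\eps)$ by \eqref{eq=boundOnA}), so $t = O(\exp(-(S-\delta)/\eps))$, and the $n$-th term of the Lagrange expansion is genuinely $O(t^n)$. Your attempted ``two-factors'' argument, and the subsequent discussion of cross terms with $\QDrchlt$, is trying to reinvent by hand what the Feshbach/Schur reduction behind the implicit equation does automatically; as written it does not close.
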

In particular, this result entails the estimates announceed in theorem \ref{thm=mainResult}.

The idea is the following~:
\begin{itemize}
  \item First, define $\tldLambda,\tldMu$ by the same change of variables as before~:
    \[
    \tldLambda = \frac{1}{\lambda_i(\eps) - a(\eps)},\quad
       \tldMu =     \frac{1}{\mu_i(\eps)     - a(\eps)}.
    \]
  \item Use the description of the Dirichlet perturbation by trace operators to get an implicit equation on $\tldMu$~:
    \[ \tldMu - \tldLambda = t(\theta) \sigma( \theta, \tldMu),\]
    where $t(\theta)$ and $\sigma( \theta, z)$ are given explicitly in terms of traces (\cf{} infra).
  \item Show that $t$ and $\sigma$ are independant of $\theta$, and estimate them, as well as 
    \[ 
    \tilde{\sigma}_{n,i} 
    =  \left( \left(\frac{d}{dz} \right)^{n-1} (\sigma^n (\tilde{z}) \right) |_{\tilde{z} = \tldLambda}
    \] 
  \item Use a refined implicit function theorem (Lagrange's inversion formula) to obtain a power series expansion of $\tldMu$:
    \[ \tldMu =  \tldLambda + \sum \tilde{\sigma}_{n,i} t^n / n!.\]
  \item Go back to $\mu_i(\eps)$, $\lambda_i(\eps)$.
\end{itemize}
These steps are detailed in \cite{CDKS87}. The $t$ and $\sigma$ are given by (\cite{CDKS87}, proof of theorem V.2):
\begin{align*}
  t(\theta) &= 
    \eps^2 \tr \abs{ B(\theta,a) \PDrchlt A^*(\bar{\theta}, a) }, \\
  \sigma(\theta, \tilde{z}) &=
    \eps^2 t(\theta)^{-1} \tr \PDrchlt A^*(\bar{\theta}, a) (1 - M(\theta,\tilde{z})) B(\theta,a) \PDrchlt,\\
  M(\theta,\tilde{z}) &=
  \eps^2 B(\theta,a) \QDrchlt (\QDrchlt R(\theta,a) \QDrchlt - \tilde{z})^{-1} \QDrchlt A^*(\bar{\theta}, a).
\end{align*}
Analyticity arguments show that these quantities do not depend on $\theta$. 
Since these algebraic formul\ae{} still hold in our case, all we have to do is 
to get estimates on $t$, $\sigma$ and the $\tilde{\sigma}_{n,i}$ and $\sigma_{n,i}$. First, putting together the bound \eqref{eq=BPDisSmall} on $B\PDrchlt$  and the bound \eqref{eq=boundOnA} on $A$, we get: 
\[ t = \bigO\left( \exp \left( - \frac{S(\eps) - \delta}{\eps}\right)\right).\]
Next, $\sigma$ is controlled by:
\begin{align*}
    \sigma &\leq \eps^2 t^{-1} \norm{1-M} \tr\abs{B(\theta,a) \PDrchlt A^*} \\
    &\leq \norm{1 - M},
\end{align*}
so it suffices to get a bound on $M$. Following the arguments of proposition \ref{prpstn=RDMinusZTildaSmall}, one can show that $(\QDrchlt R^D(\theta,a) \QDrchlt - \tilde{z})^{-1}$ is exponentially small. The  same interpolation argument that was developed after \eqref{eq=contourIntegral} then shows that $(\QDrchlt R(\theta,a)\QDrchlt - \tilde{z})^{-1}$ is exponentially small. This compensates the $\exp(\delta/\eps)$ coming from the bounds on $A$ and $B$, thereby proving that $M$ is $\bigO(1)$. 

We can then go back to \cite{CDKS87} to see that this justifies the steps explained above, proving the power series expansion of proposition \ref{prpstn=tunnellingExpansion} and ending the proof of our main result.

 \section{Appendix: on symbols and pseudo-differential calculus}
 We found it convenient to use well known techniques of microlocal analysis and pseudo-differential operators (\PDO) to approximate the resolvent. We briefly describe these techniques in this appendix. For more detailed presentations and historical remarks, see \eg{} \cite{Tay81,Hor85,Ste93}.
 \subsection{Pseudo-differential operators}
 \label{sec=pseudors}
  It is well known that a (classical) differential operator with constant
  coefficients $A = \sum a_\alpha D^\alpha$ (where $D^\alpha =
  i^{-\abs{\alpha}} \partial_x^\alpha$) becomes, in the Fourier space, a
  multiplication by a polynomial in $\xi$, called the \emph{symbol} of the
  operator. More precisely, if $\fou{u}(\xi) = (2\pi)^{-d/2} \int u(x)
  e^{-ix\cdot \xi} dx$ denotes the Fourier transform, we have
  \[
  \fou{A\phi}(\xi) = a(\xi) \fou{\phi}(\xi),
  \]
  where $a(\xi) = \sum_\alpha a_\alpha \xi^\alpha$.
  In particular, the operator is inversible if the symbol is nowhere zero, and the inverse corresponds to multiplication by $a^{-1}$.

  Trying to generalize this to non-constant coefficients naturally leads to considering operators defined by a symbol $a(x,\xi)$ which is not necessarily a polynomial, and try to define $\op(a)$ by: 
\[
  \op(a) f:x \mapsto (2\pi)^{-d/2} \int e^{i(x,\xi)} a(x,\xi) \hat{f}(\xi) d\xi
\]
To make sense of this definition, hypotheses on $a$ are needed, and many classes of ``good'' $a$ have been defined. Such classes allow ``symbolic calculus'', \ie{} results that allow one to work on symbols rather than operators: typically, one would like to compare $\pdo(a)\pdo(b)$ by $\pdo(ab)$.

Among these classes, we mention the classical classes $\mathcal{S}^m$, defined by decay conditions on $\xi$:
\[
  a\in \mathcal{S}^m \iff \norm{\dXAlphaDXiBeta a}_\infty 
  \leq \frac { C_{\alpha,\beta}}{ \langle \xi \rangle^{m+\abs{\beta}}},
\]
where $\langle\xi\rangle = (1+\xi^2)^{1/2}$. This decay in $\xi$ implies that, for $m>0$,  operators in $\mathcal{S}^m$ improve differentiability by $m$ units. On these classes, the following holds:
\begin{thrm}
    [Symbolic calculus,\cite{BF74}, Theorem 1]
    \label{thrm=symbolicCalculus}%
    If $a\in\mathcal{S}^m$, $b\in\mathcal{S}^n$, then $\pdo(a)\pdo(b)$ is a \PDO, its symbol $a\circ b$ is in $\mathcal{S}^{m+n}$, and has the following expansion:
    \begin{equation}
        \forall N, \quad a\circ b = \sum_{\abs{\alpha}<N} \frac{1}{\alpha!}
          \partial_\xi^\alpha a (x,\xi) D_x^\alpha b (x,\xi)
           + r_N
        \label{eq=asymptoticExpansion}
    \end{equation}
    where $r_N\in \mathcal{S}^{m+n -N}$, and is defined by the integral:
    \begin{equation}
        r_N = \sum_{\abs{\alpha} = N} c_\alpha \int_0^1 \iint_{\xR^{2d}}e^{-i\langle y-x,\eta - \xi\rangle } \partial_\xi^\alpha a(x,\eta_t)  D_y^\alpha b(y,\xi) dyd\eta dt
        \label{eq=remainder}
    \end{equation}
    for some constants $c_\alpha$ and $\eta_t = \xi + t(\eta - \xi)$.
\end{thrm}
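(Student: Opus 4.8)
The plan is to identify the symbol of the composed operator as an oscillatory integral, to extract the asymptotic expansion \eqref{eq=asymptoticExpansion} by a Taylor expansion in one frequency variable, and then to establish the symbol estimates by repeated integration by parts; the statement is classical, and one could alternatively just invoke \cite{BF74}.

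First I would start from the definitions of $\pdo(a)$ and $\pdo(b)$, write out $\pdo(a)\pdo(b)u$ with the two Fourier integrals, exchange the orders of integration, and read off that $\pdo(a)\pdo(b)=\pdo(c)$ with
\[
c(x,\xi) = (2\pi)^{-d}\iint_{\xR^{2d}} e^{-i\langle z,\zeta\rangle}\, a(x,\xi+\zeta)\, b(x+z,\xi)\, dz\, d\zeta .
\]
Since $a\in\mathcal{S}^m$ and $b\in\mathcal{S}^n$, this integral is not absolutely convergent, so it has to be read as an oscillatory integral: I would insert a Schwartz cutoff $\chi(\kappa z,\kappa\zeta)$, use the identities $(1+\abs{z}^2)^{-1}(1-\Delta_\zeta)e^{-i\langle z,\zeta\rangle}=e^{-i\langle z,\zeta\rangle}$ and its analogue in the variable $\zeta$ to integrate by parts sufficiently often, and then let $\kappa\to0$; the polynomial decay of the $\xi$-derivatives of $a$ and $b$ makes the limit exist and be independent of $\chi$. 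This also shows that $\pdo(a)\pdo(b)$ is again a pseudo-differential operator.

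Next I would Taylor-expand $a(x,\xi+\zeta)$ in $\zeta$ about $\zeta=0$ to order $N$, with integral remainder. Substituting the polynomial part into the integral, each monomial is handled by rewriting $\zeta^\alpha e^{-i\langle z,\zeta\rangle}$ as a constant times $\partial_z^\alpha e^{-i\langle z,\zeta\rangle}$, integrating by parts in $z$ to move the derivative onto $b(x+z,\xi)$, and then carrying out the $z,\zeta$ integration (which is Fourier inversion evaluated at $z=0$); this collapses the term to exactly $\frac{1}{\alpha!}\partial_\xi^\alpha a(x,\xi)\,D_x^\alpha b(x,\xi)$, the powers of $i$ and the signs recombining to produce the operator $D_x$ of the statement. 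Performing the same manipulation on the Taylor remainder, and renaming $\eta = \xi+\zeta$, $\eta_t = \xi+t(\eta-\xi)$, yields the oscillatory-integral formula \eqref{eq=remainder} for $r_N$.

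The final step, which I expect to be the main (though purely technical) obstacle, is to prove that $c\in\mathcal{S}^{m+n}$ and $r_N\in\mathcal{S}^{m+n-N}$. Here one differentiates the regularized integral in $x$ and $\xi$ and integrates by parts again: each integration by parts in $z$ gains a factor $\langle\zeta\rangle^{-1}$ at the cost of a derivative on $a$, and each one in $\zeta$ gains $\langle z\rangle^{-1}$ at the cost of a derivative on $b$. Choosing, for each pair of derivatives $\dXAlphaDXiBeta$, enough integrations by parts in each variable to make the integral absolutely convergent, one bounds it, using the $\mathcal{S}^m$ and $\mathcal{S}^n$ estimates, by $C_{\alpha,\beta}\langle\xi\rangle^{m+n-\abs{\beta}}$ — and for $r_N$ by $C_{\alpha,\beta}\langle\xi\rangle^{m+n-N-\abs{\beta}}$, the extra $N$ powers coming from the factor $\zeta^\alpha$ with $\abs{\alpha}=N$ in the remainder together with the $\mathcal{S}^m$-decay of $\partial_\xi^\alpha a$. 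Tracking that the surviving powers of $\langle\xi\rangle$ recombine to the claimed orders is the one point that requires care; the rest is bookkeeping around the Fourier inversion formula.
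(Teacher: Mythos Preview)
Your sketch is the standard argument and is correct in outline; note, however, that the paper does not actually prove this theorem. It is quoted from \cite{BF74} (with the explicit remainder formula taken from there), and the accompanying remark simply points to that reference and to the textbooks \cite{Tay81,Hor85,Ste93}. So there is nothing to compare against: your proposal supplies a proof where the paper deliberately gives none.
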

\begin{rmrk}
    We cite \cite{BF74}, where the result is more general, because the remainder there is explicit. This theorem can be found in any of the textbooks mentioned above.
\end{rmrk}

The derivative estimates show that our symbols always belong to some $\mathcal{S}^m$.

\subsection{\texorpdfstring{$L^2$}{L2} bounds}
The \PDO{} are defined at first on the Schwartz space; proving that they send $L^2$ to $L^2$ is a well studied problem, and many criteria are available.

The first results are written for the classical symbols (in $\mathcal{S}^m$). If $m\leq 0$, they define bounded operators in $L^2$. In our case, the boundedness is therefore easy to prove. However, the estimates of the $L^2$ norm depend on the $L^\infty$ norm of many derivatives of the symbol, and are insufficient to carry on the stability argument of section \ref{sec=spectralStability}.

Finding precisely how many derivatives are needed for $L^2$ continuity to hold has been the subject of much work (following Calderon and Vaillancourt's \cite{CV72}, see \eg{} \cite{Hwa87,CM78}). These refined estimates do not yet give the desired result. They are still stated in terms of uniform norms of the derivatives, and our symbols behave badly in this respect.

To understand the difficulty, consider the symbol $r(x,\xi) = 1/(\lambda +\xi^2)$. It is a gross simplification of our symbol, but it retains its main features. The fact that $Op(r)$ should be an approximate resolvent for the Laplacian, and the trivial positivity bound of the latter lead us to believe that $Op(r)$ should be bounded in $L^2$ by $c/\lambda$~: if this bound can be obtained by pseudo differential arguments (without resorting to positivity, nor on the fact that it does not depend on $x$), it should carry over to our symbol. 

However, it is easily seen that the best uniform bound on $\partial_\xi^{\beta}r$ when $\beta_i = 0$ or $1$,  is of the order $\lambda^{-1-\abs{\beta}/2}$, and even the restricted conditions of Calderon and Vaillancourt ($\abs{\beta}\leq \floor{n/2} + 1$, or $\beta_i\in\{0,1\}$) do not give a bound of the right order.

Fortunately, subsequent papers  have shown  still other conditions for $L^2$ continuity. In particular, A.~Boulkhemair (in \cite{Bou95}, to which we refer for further reference) , expliciting results from \cite{BM88}, gives a statement which involves local $L^2$ norms of the symbol. 

\begin{thrm}
  \label{thm=CalderonVaillancourt}
  [\cite{Bou95}, Corollary 3]
Let $\chi$ be a bump function in $\xR^{2d}$ ($\chi$ is compactly supported and normalized in $L^2$), and $s>d/2$, $s'>d/2$.
  Let $a:\xR^d \times \xR^d$  be such that
  \begin{equation}
    \label{eq=conditionCalderon}
    \exists C(a), \qquad
    \int \abs{ (1-\Delta_x)^{s/2} (1-\Delta_\xi)^{s'/2}\left( \chi(x-k, \xi-l) a(x,\xi)\right) }^2 dx d\xi \leq C(a)^2,
  \end{equation}
for all $(k,l)$ in $\xR^d \times \xR^d$.

  Then $a(x, D)$ is continuous from $L^2$ to $L^2$  with its norm bounded by
  $C_{s,d}C(a)$ , where $C_{s,d}$ only depends on $s,d$ and $\chi$.
\end{thrm}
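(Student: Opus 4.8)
The statement is the $L^2$--boundedness theorem quoted from \cite{Bou95}; the plan is to establish it by the classical phase--space localization and almost--orthogonality argument of Calder\'on--Vaillancourt, in the sharpened form due to Cordes, Coifman--Meyer and Boulkhemair.

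First I would reduce to a partition of unity subordinate to translates of $\chi$. Since $\chi$ is a fixed bump with positive $L^2$ norm, finitely many integer translates $\chi(\cdot-k_1),\dots,\chi(\cdot-k_M)$ sum to a function bounded below on $\R^{2d}$; dividing by this sum produces $\varphi\in C^\infty_c(\R^{2d})$, supported in a fixed cube $Q$, with $\sum_{j\in\field{Z}^{2d}}\varphi(\cdot-j)\equiv 1$. As $\varphi$ is compactly supported, each $a_j:=\varphi(\cdot-j)\,a$ is a finite sum of terms $\Phi_k(\cdot-j)\,\chi(\cdot-k)\,a$ with $\Phi_k$ smooth, so hypothesis \eqref{eq=conditionCalderon} transfers to the uniform bound
\[
\sup_{j\in\field{Z}^{2d}}\ \norm{(1-\Delta_x)^{s/2}(1-\Delta_\xi)^{s'/2}a_j}_{L^2(\R^{2d})}\ \aleq\ C(a),
\]
and $a(x,D)=\sum_j a_j(x,D)=:\sum_j T_j$ on the Schwartz space.

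Next I would expand each piece in a Fourier series adapted to a slightly enlarged cube: $a_j(x,\xi)=\psi(x-j_x,\xi-j_\xi)\sum_{(m,n)\in\field{Z}^{2d}}c_j(m,n)\,e^{i\ell(m\cdot(x-j_x)+n\cdot(\xi-j_\xi))}$, with $\psi\in C^\infty_c$ equal to $1$ on $Q$ and $\ell,\psi$ fixed. Parseval on the torus turns the Sobolev bound above into $\sum_{m,n}\abs{c_j(m,n)}^2\langle m\rangle^{2s}\langle n\rangle^{2s'}\aleq C(a)^2$, uniformly in $j$, and this is exactly where the hypothesis $s,s'>d/2$ is used: Cauchy--Schwarz against the convergent series $\sum_m\langle m\rangle^{-2s}$ and $\sum_n\langle n\rangle^{-2s'}$ yields the crucial $\ell^1$ control
\[
\sup_{j\in\field{Z}^{2d}}\ \sum_{(m,n)\in\field{Z}^{2d}}\abs{c_j(m,n)}\ \aleq\ C_{s,s',d}\,C(a)
\]
(equivalently, $a$ lies in the Sj\"ostrand modulation algebra $M^{\infty,1}$ with norm $\aleq C(a)$). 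Quantizing one mode, after factoring $\psi=\psi_1(x)\psi_2(\xi)$, gives a composition of operators that are $L^2$--bounded uniformly in $j,m,n$ -- a smooth compactly supported multiplication, a modulation $e^{i\ell m\cdot x}$, a translation by $\ell n$, and a smooth compactly supported Fourier multiplier -- so $\norm{T_j}\aleq\sum_{m,n}\abs{c_j(m,n)}\aleq C(a)$ for each $j$.

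Finally I would invoke Cotlar--Stein. The operator $T_j$ has Schwartz kernel supported, in $x$, in $j_x+Q_x$ and has frequencies localized near $j_\xi+\ell\field{Z}^d$, so $T_{j'}^*T_j=0$ whenever the $x$--cubes of $j,j'$ are disjoint and $T_jT_{j'}^*=0$ whenever the $\xi$--cubes are disjoint; in the remaining direction the mismatch of localizations, together with the rapid decay of $\widehat\psi$ and the $\ell^1$ bound on the $c_j$, gives $\norm{T_{j'}^*T_j}^{1/2}+\norm{T_jT_{j'}^*}^{1/2}\aleq C(a)\,\langle j-j'\rangle^{-\rho}$ with $\sum_{j}\langle j\rangle^{-\rho}<\infty$, so Cotlar--Stein yields $\norm{a(x,D)}=\norm{\sum_j T_j}\aleq C(a)\sup_j\sum_{j'}\langle j-j'\rangle^{-\rho}\aleq C_{s,d}\,C(a)$. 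The hard part is precisely this almost--orthogonality estimate with a \emph{summable} decay rate $\rho$ extracted from only $s,s'>d/2$; the cleanest way to make it rigorous is to recognize \eqref{eq=conditionCalderon} as membership in $M^{\infty,1}$ and to run Cotlar--Stein on a Gabor decomposition of $a$, for which the off--diagonal decay is just the Gaussian decay of the cross Wigner transforms of the window, everything else being routine bookkeeping.
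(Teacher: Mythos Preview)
The paper does not prove this theorem; it is quoted verbatim as Corollary~3 of \cite{Bou95} and used as a black box in the appendix, so there is no ``paper's own proof'' to compare with.

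Your sketch is the right circle of ideas and is essentially Boulkhemair's argument: phase--space partition of unity, Fourier expansion of each localized piece, the passage from the local Sobolev bound \eqref{eq=conditionCalderon} to an $\ell^1$ bound on the Fourier coefficients via Cauchy--Schwarz against $\sum\langle m\rangle^{-2s}\sum\langle n\rangle^{-2s'}$ (this is exactly where $s,s'>d/2$ enters, and it amounts to the embedding into Sj\"ostrand's class $M^{\infty,1}$), and finally almost--orthogonality. One point to tighten: your Cotlar--Stein step asserts $T_{j'}^*T_j=0$ for disjoint $x$--cubes and $T_jT_{j'}^*=0$ for disjoint $\xi$--cubes, which is correct, but Cotlar--Stein requires summable control of \emph{both} $\norm{T_{j'}^*T_j}^{1/2}$ and $\norm{T_jT_{j'}^*}^{1/2}$ over \emph{all} $j'$, so you still need a quantitative decay of, say, $\norm{T_{j'}^*T_j}$ when $j_x=j'_x$ but $\abs{j_\xi-j'_\xi}$ is large. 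This is where the rapid decay of $\widehat\psi$ (or, in the Gabor formulation, the off--diagonal decay of the cross--Wigner transforms of the window) has to be made explicit; you flag this as ``the hard part'' but do not carry it out. If you want a self--contained write--up rather than a pointer to \cite{Bou95}, that estimate is the one place where real work remains.
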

This condition, on the toy symbol $r$, gives a bound of the order $\lambda^{-1-(s-d/2)}$: we almost recover the right order $\lambda^{-1}$. 


\subsection{Some symbols}
Finally, we record here the symbols of various operators. The radial part of the Laplacian, $D^2$, has the following symbol:
\[
  \sigma(D^2) (x,\xi) = \frac{(n-1)(n-3)}{4r(x)^2} + i \frac{n-1}{r^2(x)} x\cdot \xi - \frac{1}{r^2(x)} (x\cdot\xi)^2.
\]
It is then easily seen that the symbol of the scaled Laplacien is given by:
\begin{align}
  \notag
\sigma( - \Delta_\theta ) 
&= \left( \frac{r^2}{r_\theta^2} - e^{-2\theta} \right) \sigma( D^2) + \frac{r^2}{r_\theta^2} \abs{\xi}^2 \\
\label{eq=symbolDilatedLaplacian}
&= e^{-2\theta} \abs{\xi}^2 + \left( \frac{r^2}{r_\theta^2} - e^{-2\theta} \right)\left(\abs{\xi}^2 + \sigma(D^2)\right)
\end{align}
for $r(x)>r_0$, and $\sigma(-\Delta_\theta) = \abs{\xi}^2$ for $r<r_0$. 



\end{document}